\documentclass[11pt, leqno]{amsart}

\usepackage{amsmath,amsthm,amssymb,amsfonts,MnSymbol,epsfig,graphicx,algorithm,algorithmic}
\usepackage[margin=1in]{geometry}
\usepackage{mathtools}
\usepackage{stmaryrd}
\usepackage{hyperref}
\usepackage{graphicx}
\usepackage{pdfsync}
\usepackage{color, subfigure}
\usepackage[T1]{fontenc}
\usepackage[normalem]{ulem}

\DeclareMathOperator*{\argmin}{arg\,min} 
\DeclareMathOperator*{\RR}{\mathbb{R}}
\DeclareMathOperator*{\length}{L}
\DeclareMathOperator*{\comp}{k}
\DeclareMathOperator*{\supp}{supp}
\DeclareMathOperator*{\diam}{diam}
\DeclareMathOperator*{\conv}{Conv}

\definecolor{mygreen}{rgb}{0.1,0.75,0.2}

\newcommand{\nc}{\normalcolor}

\def\XXint#1#2#3{{\setbox0=\hbox{$#1{#2#3}{\int}$} \vcenter{\hbox{$#2#3$}}\kern-.5\wd0}}

%  \setlength{\leftmargin}{43pt}
%  \setlength{\topsep}{0pt}
 % \setlength{\itemsep}{-3pt}}}
 % {\vspace{5pt}fff
{   \end{list} }

\newcommand{\R}{\mathbb{R}}
\newcommand{\te}{\textrm}
\newcommand{\tacka}{\,\cdot\,}

\numberwithin{equation}{section}

\newtheorem{theorem}{Theorem}[section]

\newtheorem{lemma}[theorem]{Lemma}

\theoremstyle{remark}

\newtheorem{example}[theorem]{Example}

%\addtolength{\topmargin}{-46pt} 
%\addtolength{\headsep}{-10pt}
%\addtolength{\textheight}{24pt}

\begin{document}
\title{Multiple penalized principal curves: analysis and computation}
\author{Slav Kirov and Dejan Slep\v{c}ev}
\address{Department of Mathematical Sciences, Carnegie Mellon University, Pittsburgh, PA, 15213, USA.}
\email{skirov@andrew.cmu.edu, slepcev@math.cmu.edu}
\date{\today}

\maketitle

\begin{abstract}
We study the problem of determining the one-dimensional structure that best represents a given data set. More precisely, we take a variational approach to approximating a given measure (data) by curves. We consider an objective functional whose  minimizers are a regularization of principal curves and introduce a new functional which allows for multiple curves. 
We prove existence of minimizers and investigate their properties.
%establishing basic properties, 
%we investigate qualitative properties of the minimizers that include when to expect overfitting, and where along the data connectedness may fail. 
%we investigate the length scales at which minimizers approximate the data, and provide criteria for when overfitting and where disconnections occur. \nc
While both of the functionals used are non-convex, we show that enlarging the configuration space to allow for multiple curves leads to a simpler energy landscape with fewer undesirable (high-energy) local minima.
We provide an efficient algorithm for approximating minimizers of the functional and demonstrate its performance on real and synthetic data.
 The numerical examples illustrate the effectiveness of the proposed approach in the presence of substantial noise, and the viability of the algorithm for high-dimensional data. %We show that the proposed approach is able to find the underlying one-dimensional structure in the presence of substantial noise, and is feasible in very high dimensions.
\end{abstract}
\medskip

%\keywords{principal curves, geometry of data, curve fitting}
\textbf{Keywords.} principal curves, geometry of data, curve fitting

\textbf{Classification. }%% MSC codes here, in the form: \MSC code \sep code
%% or \MSC[2008] code \sep code (2000 is the default)
49M25,   %	Discrete approximations
65D10,   %Smoothing, curve fitting
62G99, %nonparametric inference
65D18,  % 	Computer graphics, image analysis, and computational geometry 
65K10,   %	Optimization and variational techniques
49Q20 %Variational problems in a geometric measure-theoretic setting
% 49Q10  manifold problems, other than minimal surfaces
 % 	49J55  	Problems involving randomness [See also 93E20]
% 	62G20  		Nonparametric inference: Asymptotic properties

%%%%%%%%%%%%%%%%%%%%%%%%%%%%%%%%%%%%%%%%%%%%%
\section{Introduction}

We consider the problem of finding one-dimensional structures best representing the data given as point clouds. 
This is a classical problem. It has been studied in 80's by Hastie and 
Stuetzle \cite{hs89} who introduced \emph{principal curves} as the curves going through the ``middle'' of the data.  A number of modifications of principal curves, which make them more stable and easier to compute, followed \cite{Delicado, DucStu94, gw13, kegl, sms01}. However there are very few precise mathematical results on  the relation between the properties of principal curves and their variants, and the geometry of the data. 
On the other hand rigorous mathematical setup  has been developed for related problems studied in the context of optimal (transportation) network design \cite{bs02,bs03}. In particular, an objective functional studied in network design, the average-distance functional, is closely related to a regularization of principal curves.

%Our first aim is to carefully investigate a desirable variant of principal curves which is closely related to the average-distance problem. 

Our first aim is to carefully investigate a desirable variant of principal curves suggested by the average-distance problem. The objective functional includes a length penalty for regularization, and we call its minimizers \emph{penalized principal curves}. 
We establish their basic properties (existence and basic regularity of minimizers) and investigate the sense in which they approximate the data and represent the one-dimensional structure. 
One of the shortcomings of principal curves is that they tend to overfit noisy data. Adding a regularization term to the objective functional minimized by principal curves is a common way to address overfitting. The drawback is that doing so introduces bias: when data lie on a smooth curve the minimizer is only going to approximate them. We investigate the relationship between the data and the minimizers and establish how the length scales present in the data and the parameters of the functional dictate the length scales seen in the minimizers. In particular we provide the critical length scale below which variations in the input data are treated as noise and establish the typical error (bias) when the input curve is smooth. We emphasize that the former has direct implications for when penalized principal curves begin to overfit given data.

Our second aim is to introduce a functional %which allows the data to be approximated by more than one curve (\emph{multiple penalized principal curves}). 
that permits its minimizers to consist of more than one curve (\emph{multiple penalized principal curves}).
The motivation is twofold. The data itself may have one-dimensional structure that consists of more than one component, and the relaxed setting would allow it to be appropriately represented. 
%In cases when the data has one-dimensional structure that consists of more than one component, it can appropriately represented. 
%This functional allows one to appropriately represent and approximate data whose one-dimensional structure consists of more than one component. 
The less immediate appeal of the new functional is that it guides the design of an improved scheme for computing penalized principal curves. Namely, for many datasets the penalized principal curves functional has a complicated energy landscape with many local minima. This is a typical situation and an issue for virtually all present approaches to 
nonlinear principal components. As we explain below, enlarging the set over which the functional is considered (from a single curve to multiple curves) and appropriately penalizing the number of components leads to {significantly} better behavior of energy descent methods (they more often converge to low-energy local minima). %Furthermore we present conditions under which one can expect the minimizers to have only one component and thus be a minimizer to the penalized principal curve problem.  

We find topological changes of multiple penalized principal curves are governed by a critical \emph{linear density}. The linear density of a curve is the density of the projected data on the curve with respect to its length. If the linear density of a single curve drops below the critical value over a large enough length scale, a lower-energy configuration  consisting of two curves can be obtained by removing the corresponding curve segment. Such steps are the means by which configurations following energy descent stay in higher-density regions of the data, and avoid local minima that penalized principal curves are vulnerable to. Identification of the critical linear density and the length scale over which it is recognized by the functional further provide insight as to the conditions under and the resolution to which minimizers to can recover one-dimensional components of the data.

We apply modern optimization algorithms based on alternating direction method of multipliers (ADMM) \cite{admm_boyd} and closely related Bregman iterations \cite{GolOsh09, OshBur05} to compute approximate minimizers. We describe the algorithm in detail, discuss its complexity and present 
computational examples that both illustrate the theoretical findings and support the viability of the approach for finding complex one-dimensional structures in point clouds with tens of thousands of points in high dimensions.

\begin{figure}[ht]
\begin{center}
\centerline{\includegraphics[width=0.9\columnwidth]{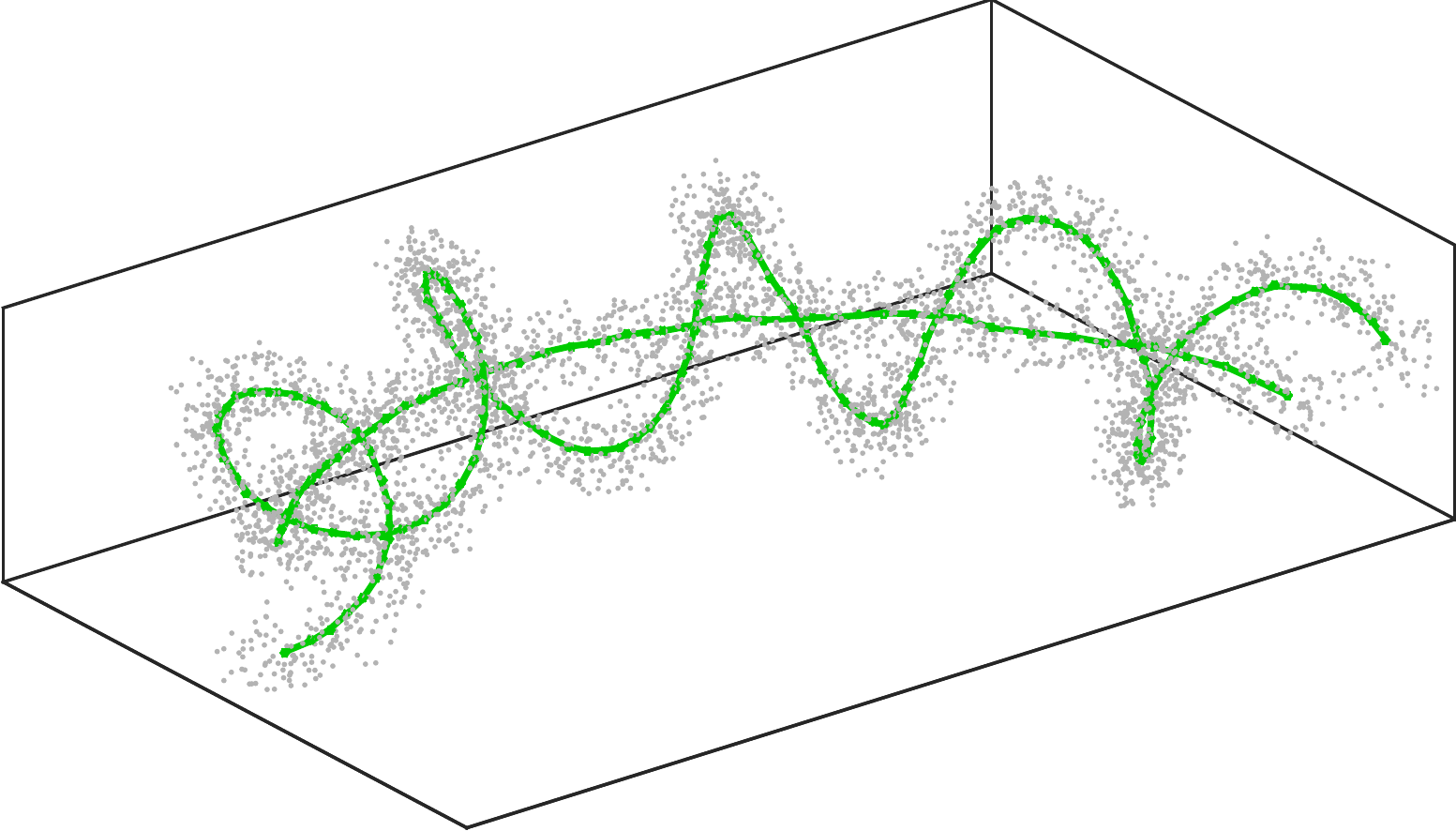}}
\vspace*{-4pt}
\caption{Example of a point cloud generated by noisy samples of two curves (not shown): a section of a circle and a curved helix wrapping around it. The green curves shown  represent the one dimensional approximation of the data cloud obtained by minimizing the proposed functional \eqref{mppc} using the algorithm of Section \ref{sec:num}.}
\end{center}
\label{ex_8}
\end{figure} 

\subsection{Outline} In Section \ref{sec:bp} we introduce the objective functionals (both for single and multiple curve approximation), and recall some of the related approaches. We establish basic properties of the functionals, including the existence of minimizers and their regularity. Under assumption of smoothness we derive the Euler-Lagrange equation for critical points of the functional. We conclude Section \ref{sec:bp} by computing the second variation of the functional.
In Section \ref{sec3} we provide a number of illustrative examples and 
investigate the relation between the length scales present in the data, the parameters of the functional and the length scales present in the minimizers. 
At the end of Section \ref{sec3}  we \nc discuss parameter selection for the functional. In Section \ref{sec:num} we describe the algorithm for computing approximate minimizers of the \eqref{mppc} functional. \nc In Section \ref{sec:ne} we provide some further numerical examples that illustrate the applicability of the functionals and algorithm. Section \ref{sec:dc} contains the conclusion and a brief discussion of and comparison with other approaches for one-dimensional data cloud approximation. Appendix \ref{apA} contains some technical details of an analysis of a minimizer considered in Section \ref{sec3}.

\section{The functionals and basic properties} \label{sec:bp}

In this section we introduce the \emph{penalized principal curves functional} and the 
\emph{multiple penalized principal curves functional}. We recall and prove some of their basic properties. Let $\mathcal{M}$ be the set of finite, compactly supported measures on $\mathbb{R}^d$, with $d\geq 2$ and $\mu(\mathbb{R}^d)>0$.

\subsection{Penalized principal curves}

% L^p-regularized principal curves
% penalized principal curves
% penalized multiple principal curves

Given a measure (distribution of data) $\mu \in \mathcal{M}$, $\lambda > 0$, and $p\geq1$, the \emph{penalized principal curves} are minimizers of
\begin{equation}
\label{ppc}
 E^{\lambda}_\mu(\gamma) := \int_{\mathbb{R}^d} d(x,\Gamma)^p d\mu(x) + \lambda \length(\gamma) 
\tag{PPC}
 \end{equation}
over $\gamma \in \mathcal{C} := \{ \gamma : [0,a] \rightarrow \mathbb{R}^d: a \geq 0,  \gamma \text{ is Lipschitz with } |\gamma'| \leq 1, \; \mathcal{L}^1-\text{ a.e.}\}$, and where $\Gamma := \gamma([0,a])$, $d(x, \Gamma)$ is the distance from $x$ to set $\Gamma$ and $\length(\gamma)$ is the length of $\gamma$:
\[ \length(\gamma) := ||\gamma||_{TV} := \sup \left \{  \sum_{i=2}^n | \gamma(x_i) - \gamma(x_{i-1})| \: : \: 0\leq x_1<x_2<...<x_n \leq a, \,n \in \mathbb{N}  \right \}  \]
 and where $|\cdot|$ denotes the Euclidean norm. The functional is closely related to the average-distance problem introduced by Buttazzo, Oudet, and Stepanov \cite{bs02} having in mind applications to optimal transportation networks \cite{bs03}. In this context, the first term can be viewed as the cost of a population to reach the network, and the second a cost of the network itself. 
There the authors considered general connected one-dimensional sets and instead of length penalty considered a length constraint. 
The penalized functional for one-dimensional sets was studied by Lu and one of the authors in \cite{LuSle13}, and later for curves (as in this paper)  in \cite{LuSle15}. 
%The functional restricted to curves, that we consider, has been considered by Lu and one of the authors \cite{LuSle15}. 
Similar functionals have been considered in statistics and machine learning literature as regularizations of the principal curves problem by Tibshirani \cite{Tib92} (introduces curvature penalization)
Kegl, Krzyzak, Linder, and Zeger \cite{kegl} (length constraint), Biau and Fischer \cite{BiaFis12} (length constraint)
Smola, Mika, Sch\"olkopf, and  Williamson \cite{sms01} (a variety of penalizations including penalizing length as in \eqref{ppc}) and others.

The first term in \eqref{ppc} measures the approximation error, while the second one penalizes the complexity of the approximation. If $\mu$ has smooth density, or is concentrated on a smooth curve, 
 the minimizer $\gamma$ is typically supported on smooth curves. However this is not universally true. Namely it was shown in \cite{Sle14} that minimizers of average-distance problems can have corners, even if $\mu$ has smooth density. An analogous argument applies to \eqref{ppc} (and later introduced \eqref{mppc}). This raises important modeling questions regarding what the best functional is and if further regularization is appropriate. We do not address these questions in this paper.
 %and instead note the relationships we later obtain between $\lambda$, length-scales of the data, and behavior of minimizers provide guidance on how large $\lambda$ should be to ensure linear stability (prevent overfitting) of local minimizers. 
%\grn (the way it was previously written did not read well in the context of the previous sentence) \nc
 %but note that the information on length-scales in the problem that we obtain can provide guidance on how much regularization is needed.  
 
Existence of minimizers of \eqref{ppc} in $\mathcal{C}$ was shown in \cite{LuSle15}. There it was also shown that any minimizer $\gamma_{\min}$ has the following total curvature bound
\[||\gamma_{\min} '||_{TV} \leq \frac{p}{\lambda}  \text{diam(supp}(\mu))^{p-1} \mu(\mathbb{R}^d). \]
The total variation (TV) above allows to treat the curvature as a measure, with delta masses at locations of corners, which is necessary in light of the possible lack of regularity. 
In \cite{LuSle15}, it was also shown that minimizing curves are injective (i.e. do not self-intersect) in dimension $d=2$ if $p\geq 2$.

\subsection{Multiple penalized principal curves}

We now introduce an extension of \eqref{ppc} which allows for configurations to consist of more than one component. Since \eqref{ppc} can be made arbitrarily small by considering $\gamma$ with many components, a penalty on the number of components is needed.
 Thus we propose the following  functional for multiple curves 
\begin{equation}
\label{mppc}
E^{\lambda_1,\lambda_2}_\mu(\gamma) :=  \int_{\mathbb{R}^d} d(x,\Gamma)^p d\mu(x) + \lambda_1 \left( \length(\gamma) + \lambda_2 \left (\comp(\gamma) - 1\right ) \right )
\tag{MPPC}
\end{equation} 
where, we relax $\gamma$ to now be piecewise Lipschitz and $\comp(\gamma)$ is the number of curves used to parametrize $\gamma$. More precisely, we aim to minimize \eqref{mppc} over the admissible set 
\[\mathcal{A} :=  \left \{ \gamma = \{\gamma^i\}_{i=1}^{\comp(\gamma)} : \comp(\gamma) \in \mathbb{N}, \, \gamma^i \in \mathcal{C}, \enspace i=1,...,\comp(\gamma) \right \}. \]
We call elements of $\mathcal A$ \emph{multiple curves}.
One may think of this functional as penalizing both zero- and one-dimensional complexities of approximations to $\mu$. In particular we can recover the (\ref{ppc}) functional by taking $\lambda_2$ large enough.  On the other hand, taking $\lambda_1$ large enough leads to a k-means clustering problem which penalizes the number of clusters, and has been encountered in \cite{ BrKulJor13, KulJor12}.

%A straightforward property is that minimizers cannot have distinct curves with endpoints within distance $\lambda_2$ of each other.

The main motivation for considering (\ref{mppc}), even if only one curve is sought, has to do with the non-convexity of the \eqref{ppc}. We will see that numerically minimizing \eqref{mppc} often helps evade undesirable (high-energy) local minima of the \eqref{ppc} functional.  In particular \eqref{mppc} can be seen as a relaxation of \eqref{ppc} to a larger configuration space.  The energy descent for \eqref{mppc} allows for curve splitting and reconnecting which is the mechanism that enables one to evade local minima of \eqref{ppc}.
\bigskip

\subsection{Existence of minimizers of \eqref{mppc}}
We show that minimizers of (\ref{mppc}) exist in $\mathcal{A}$. We follow the approach of \cite{LuSle15}, where existence of minimizers was shown for (\ref{ppc}). We first cover some preliminaries, including defining the distance between curves. If $\gamma_1,\gamma_2 \in \mathcal{C}$ with respective domains $[0,a_1], [0,a_2]$, where $a_1 \leq a_2$, we define the extension of $\gamma_1$ to $[0,a_2]$ as

\[ \tilde{\gamma}_1(t) = 
\begin{cases}
\gamma_1(t) & \text{if } t \in [0,a_1] \\
\gamma_1(a_1) & \text{if } t \in (a_1,a_2].
\end{cases} \]
We let 
$$d_{\mathcal{C}}(\gamma_1,\gamma_2) = \max_{t \in [0,a_2]} |\tilde{\gamma}_1(t) - \gamma_2(t)|.$$

We have the following lemma, and the subsequent existence of minimizers. 

\begin{lemma}
Consider a measure $\mu \in \mathcal{M}$ and $\lambda_1,\lambda_2 > 0, p \geq 1$. 
\begin{enumerate}
\item[(i)] For any minimizing sequence $\{\gamma_n\}$ of (\ref{mppc}) 
\begin{enumerate}
\item $\limsup_{n \to \infty} \comp(\gamma_n) \leq \frac{1}{\lambda_1 \lambda_2} (\diam \supp(\mu))^p$, and
\item  $\limsup_{n \to \infty} \length(\gamma_n) \leq \frac{1}{\lambda_1} (\diam \supp(\mu))^p$
\end{enumerate}
\item[(ii)] {There exists a minimizing sequence $\{\gamma_n\}$ of (\ref{mppc}) such that $\forall n, \, \Gamma_n$ is contained in $\conv(\mu)$, the convex hull of the support of $\mu$.}
\end{enumerate}
\end{lemma}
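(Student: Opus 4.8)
The plan is to treat \textup{(i)} and \textup{(ii)} separately, both resting on a single a priori upper bound for $\inf_{\mathcal A}E^{\lambda_1,\lambda_2}_\mu$ coming from a trivial test configuration. For \textup{(i)} I would first record that every term of \eqref{mppc} is nonnegative: the integral is $\ge 0$, $\length(\gamma)\ge 0$, and $\comp(\gamma)-1\ge 0$ since $\comp(\gamma)\in\mathbb N$. To bound the infimum from above, I test with a single point: take $\gamma=\{\gamma^1\}$ with $\gamma^1$ the constant curve at some $x_0\in\supp(\mu)$, so that $\length(\gamma)=0$ and $\comp(\gamma)=1$. Then $E^{\lambda_1,\lambda_2}_\mu(\gamma)=\int_{\R^d}|x-x_0|^p\,d\mu(x)\le (\diam\supp\mu)^p\,\mu(\R^d)$, because $|x-x_0|\le \diam\supp\mu$ for $\mu$-a.e.\ $x$. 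Writing $D=\diam\supp\mu$, this gives $\inf_{\mathcal A}E^{\lambda_1,\lambda_2}_\mu\le D^p\mu(\R^d)$.

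Given a minimizing sequence $\{\gamma_n\}$, so that $E^{\lambda_1,\lambda_2}_\mu(\gamma_n)\to \inf E\le D^p\mu(\R^d)$, I would discard nonnegative terms to isolate the two quantities of interest. Keeping only the component penalty, $\lambda_1\lambda_2(\comp(\gamma_n)-1)\le E^{\lambda_1,\lambda_2}_\mu(\gamma_n)$, so $\comp(\gamma_n)\le 1+E^{\lambda_1,\lambda_2}_\mu(\gamma_n)/(\lambda_1\lambda_2)$; taking $\limsup$ yields \textup{(a)}. Keeping only the length term, $\lambda_1\length(\gamma_n)\le E^{\lambda_1,\lambda_2}_\mu(\gamma_n)$ gives \textup{(b)} in the limit. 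Up to the harmless additive $1$ and the factor $\mu(\R^d)$ (which I expect to be a normalization convention), this is exactly the claimed bound.

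For \textup{(ii)} I would use the nearest-point projection $\pi_K$ onto $K:=\conv(\mu)$, which is a closed convex set since $\supp\mu$ is compact and hence so is its convex hull. The two facts I need are that $\pi_K$ is $1$-Lipschitz and fixes $K$ pointwise. Given any minimizing sequence $\{\gamma_n\}$, set $\tilde\gamma_n:=\pi_K\circ\gamma_n$ componentwise; I claim $E^{\lambda_1,\lambda_2}_\mu(\tilde\gamma_n)\le E^{\lambda_1,\lambda_2}_\mu(\gamma_n)$. Granting this, $\{\tilde\gamma_n\}$ is again minimizing, by the squeeze $\inf E\le E^{\lambda_1,\lambda_2}_\mu(\tilde\gamma_n)\le E^{\lambda_1,\lambda_2}_\mu(\gamma_n)\to\inf E$, and satisfies $\tilde\Gamma_n=\pi_K(\Gamma_n)\subseteq K=\conv(\mu)$, which is the assertion.

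It remains to verify the claim term by term. Composition with a $1$-Lipschitz map keeps each component in $\mathcal C$ (a $1$-Lipschitz curve automatically has $|\gamma'|\le 1$ a.e.) and does not increase length, because $\sum_i|\pi_K(\gamma(t_i))-\pi_K(\gamma(t_{i-1}))|\le \sum_i|\gamma(t_i)-\gamma(t_{i-1})|$ for every partition, so passing to the supremum gives $\length(\pi_K\circ\gamma^j)\le\length(\gamma^j)$; and $\comp$ is unchanged. The crux is the fidelity term: for $\mu$-a.e.\ $x$ one has $x\in\supp\mu\subseteq K$, hence $\pi_K(x)=x$, and nonexpansiveness gives $|x-\pi_K(y)|=|\pi_K(x)-\pi_K(y)|\le|x-y|$ for every $y\in\Gamma_n$; taking the infimum over $y\in\Gamma_n$ yields $d(x,\tilde\Gamma_n)\le d(x,\Gamma_n)$, so the integral does not increase. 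I expect this monotonicity of the fidelity term under projection, which genuinely uses that the data sit inside $K$, to be the only real point; everything else is bookkeeping.
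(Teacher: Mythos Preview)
Your argument is correct and matches the paper's approach exactly: part \textup{(i)} is obtained by testing with a singleton and dropping nonnegative terms, and part \textup{(ii)} by projecting a minimizing sequence onto $\conv(\mu)$ using that the nearest-point projection is $1$-Lipschitz. Your write-up is in fact more careful than the paper's, which simply cites \cite{bs03,LuSle15} for the projection step; you also rightly flag the missing factor $\mu(\R^d)$ and the additive $1$ in the component bound, which the paper's statement suppresses.
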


\begin{proof}
The first property follows by taking a singleton as a competitor. The second follows from projecting any minimizing sequence onto $\conv(\mu)$. Doing so can only decrease the energy, as shown in \cite{bs03,LuSle15}. The argument relies on the fact that projection onto a convex set decrease length. 
\end{proof}

\begin{lemma}
Given a positive measure $\mu \in \mathcal{M}$ and $\lambda_1,\lambda_2 > 0, p \geq 1$, the functional (\ref{mppc}) has a minimizer in $\mathcal{A}$. Moreover, the image of any minimizer is contained in the convex hull of the support of $\mu$. 
\end{lemma}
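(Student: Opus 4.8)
The plan is to run the direct method of the calculus of variations, using the two preceding lemmas as the compactness input. First I would fix a minimizing sequence $\{\gamma_n\}$ and, by part (ii) of the previous lemma, replace it by one whose images $\Gamma_n$ all lie in the \emph{compact} set $\conv(\mu)$. Part (i)(a) bounds $\comp(\gamma_n)$ along any minimizing sequence by a fixed constant, so the integer $\comp(\gamma_n)$ attains some value $k$ infinitely often; passing to a subsequence I may assume $\comp(\gamma_n)=k$ for all $n$ and write $\gamma_n=\{\gamma_n^1,\dots,\gamma_n^k\}$. Reparametrizing each component by arc length changes neither its image nor its length, hence not the value of $E^{\lambda_1,\lambda_2}_\mu$, so I may assume each $\gamma_n^i$ is defined on $[0,a_n^i]$ with $a_n^i=\length(\gamma_n^i)$, and part (i)(b) bounds all these domains uniformly.

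Next I would extract a limit one component at a time. For each fixed $i$ the domains $a_n^i$ are bounded, so after a further subsequence $a_n^i\to a^i$. Extending every $\gamma_n^i$ to a common interval by the constant-endpoint rule used to define $d_{\mathcal C}$ keeps the curves $1$-Lipschitz with images in $\conv(\mu)$, so the family is uniformly bounded and equicontinuous; Arzel\`a--Ascoli then yields a uniformly convergent subsequence $\gamma_n^i\to\gamma^i$. The limit $\gamma^i$ is $1$-Lipschitz, is constant beyond $a^i$, and restricts on $[0,a^i]$ to an element of $\mathcal C$. Iterating over the finitely many indices produces a limit multiple curve $\gamma=\{\gamma^1,\dots,\gamma^k\}\in\mathcal A$ whose image lies in the closed set $\conv(\mu)$, which already gives the convex-hull containment for the minimizer being constructed.

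It then remains to verify $E^{\lambda_1,\lambda_2}_\mu(\gamma)\le\liminf_n E^{\lambda_1,\lambda_2}_\mu(\gamma_n)$ term by term. Uniform convergence of each parametrization gives Hausdorff convergence $\Gamma_n\to\Gamma$, hence $d(\cdot,\Gamma_n)\to d(\cdot,\Gamma)$ uniformly; since $\mu$ is finite and supported in the bounded set $\conv(\mu)$, the fidelity term $\int d(x,\Gamma)^p\,d\mu$ is in fact \emph{continuous} along the subsequence. The length term is lower semicontinuous under uniform convergence (length is the total variation, which is lsc), and summing over finitely many components preserves this. The component term is unchanged, $\comp(\gamma)=k=\comp(\gamma_n)$, provided degenerate (point) components are counted. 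Combining the three gives $E^{\lambda_1,\lambda_2}_\mu(\gamma)\le\liminf_n E^{\lambda_1,\lambda_2}_\mu(\gamma_n)=\inf_{\mathcal A}E^{\lambda_1,\lambda_2}_\mu$, so $\gamma$ is a minimizer. For the ``moreover'' claim about an \emph{arbitrary} minimizer $\gamma$, I would let $\hat\gamma$ be its componentwise projection onto $\conv(\mu)$: projection onto a convex set is $1$-Lipschitz, so it does not increase the length, does not increase the fidelity term (each $x\in\supp\mu$ is a fixed point, whence $d(x,\hat\Gamma)\le d(x,\Gamma)$), and preserves the number of components; thus $E^{\lambda_1,\lambda_2}_\mu(\hat\gamma)\le E^{\lambda_1,\lambda_2}_\mu(\gamma)$ with equality forced by minimality. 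Equality in the length term, together with the fact that projection \emph{strictly} shortens any curve that exits the convex hull (as established in \cite{bs03,LuSle15}), forces $\Gamma\subset\conv(\mu)$.

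The step I expect to be the main obstacle is the bookkeeping around the component structure: passing to a subsequence on which $\comp$ is constant, and arranging the reparametrization and constant-endpoint extension so that Arzel\`a--Ascoli applies uniformly across components whose domains move with $n$, while keeping the $\comp$ term lower semicontinuous. In particular the possibility that a component collapses to a point in the limit must be handled (by counting it), and the strict shortening in the projection argument for the final assertion is the one place where the convexity of $\conv(\mu)$ is genuinely used.
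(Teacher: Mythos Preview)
Your proposal is correct and follows essentially the same route as the paper: pass to a subsequence with a fixed number of components, arc-length parametrize and extend, apply Arzel\`a--Ascoli componentwise, then use continuity of the fidelity term and lower semicontinuity of length. You are in fact more thorough than the paper's own proof, which does not spell out the ``moreover'' claim about arbitrary minimizers; your projection-onto-$\conv(\mu)$ argument with the strict-shortening observation from \cite{bs03,LuSle15} is the right way to close that gap.
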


\begin{proof}
The proof is an extension of the one found in \cite{LuSle15} for \eqref{ppc}. Let $\{\gamma_n\}_{n \in \mathbb{N}}$ be a minimizing sequence in $\mathcal{A}$. Since the number of curves $\comp(\gamma_n)$ is bounded, we can find a subsequence (which we take to be the whole sequence) with each member having the same number of curves $k$. We enumerate the curves in each member of the sequence as $\gamma_n = \{\gamma_n^i\}_{i=1}^k $. We assume that each curve $\gamma_n^i$ is arc-length parametrized for all $n\in \mathbb{N},\,i \leq k$. Since the lengths of the curves are uniformly bounded, let $L = \sup_{n,i} \length(\gamma_n^i)$, and extend the parametrization for each curve in the way defined above. Then for each $i \leq k$, the curves $\{\gamma_n^i\}_{n \in \mathbb{N}}$ satisfy the hypotheses of the Arzela-Ascoli Theorem. Hence for each $i \leq k$, up to a subsequence $\gamma_n^i$ converge uniformly to a curve $\gamma^i: [0,L] \to \RR^d$. Diagonalizing, we find a subsequence (which we take to be the whole sequence) for which the aforementioned convergence holds for all $i\leq k$. Moreover, the limiting object is a collection of curves which are 1-Lipschitz since all of the curves in the sequence are. Thus $\gamma := \{\gamma^i\}_{i=1}^k  \in \mathcal{A}$.

The mapping $\Gamma \mapsto \int_{\mathbb{R}^d} d(x,\Gamma)^p d\mu(x)$ is continuous and $\Gamma \mapsto \length(\Gamma)$ is lower-semicontinuous with respect to convergence in $\mathcal{C}$. Thus $\liminf_{n \to \infty} E^{\lambda_1,\lambda_2,p}_\mu(\gamma_n) \geq E^{\lambda_1,\lambda_2,p}_\mu(\gamma)$, and so $\gamma$ is a minimizer. 
\end{proof}

%%%%%%%%%%%%%%

\subsection{First variation} \label{sec:fv}
In this section, we compute the first interior variation of the \eqref{ppc} functional and state under what conditions a smooth curve is a critical (i.e. stationary) point. 
In the case of multiple curves, one can apply the following analysis to each curve separately.

Let $\mu$ be a compactly supported measure. We will assume the curve $\gamma:[a,b] \to \R^d$ is $C^2$. Although minimizers may have corners as mentioned earlier, our use of the first variation is aimed at understanding how the parameters of the functional relate to the length scales observed in minimizers. 
We generally expect that minimizers will be $C^2$ except at finitely many points, and hence that our analysis applies to intervals between such points. 

To compute the first variation we only perturb the interior of the curve, and not the endpoints. Without loss of generality we assume that
 $|\gamma_s| = 1$, where $\gamma_s$ denotes the partial derivative in $s$.  We consider variations of $\gamma$ of the form $\gamma(s,t) = \gamma(s) + tv(s)$, where $v \in C^2([a,b], \R^d)$, $v(a)= v(b) =0$. We note that one could allow for $v(a)$ and $v(b)$ to be nonzero (as has been considered for example in \cite{Sle14}), but it is not needed for our analysis.  We can furthermore assume (by reparameterizing the curves if necessary) that $v$ is orthogonal to the curve: $v(s) \cdot \gamma_s(s) = 0 \enspace \forall s \in [a,b]$. 
  
  Let $\Gamma_t := \gamma([a,b], t)$,  the image of $\gamma(\tacka,t)$. 
  To compute how the energy \eqref{ppc} is changing when $\gamma$ is perturbed we need to describe how the distance of points to $\Gamma_t$ is changing with $t$. 
For any $x \in \supp \mu$ let $\Pi_t(x)$ be a point on $\Gamma_t$ which is closest to $x$, that is let
$\Pi_t(x)$ be a minimizer of $|x-y|$ over $y \in \Gamma_t$. For simplicity, we assume that $\Pi_t(x)$  is unique for all $x \in \supp \mu$. The general case that the closest point is non-unique can also be considered \cite{LuSle15, Sle14}, but we omit it here. A further reason this assumption is inconsequential is that since $\mu$ is absolutely continuous with respect to the Lebesgue measure 
$\mathcal{L}^d$,  the set of points where $\Pi_t$  is non-unique has $\mu$-measure zero \cite{ManMen02}.
We call $\Pi_t$ the projection of data onto $\Gamma_t$. For $x \in \supp \mu$ let $ g(x,t) := d(x, \Gamma_t)^2 = |x -\Pi_t(x)|^2$. Then 
\begin{align}
 \frac{\partial g}{\partial t} & =  - 2 (x - \Pi_t(x)) \cdot \gamma_t  \notag \\
 \frac{\partial^2 g}{\partial t^2} & = 2 \left( |\gamma_t |^2  - (x - \Pi_t(x)) \cdot \gamma_{tt} - 
\frac{(\gamma_t  \cdot \gamma_s - (x - \Pi_t(x)) \cdot \gamma_{st} )^2}{|\gamma_s|^2 - (x - \Pi_t(x))\cdot  \gamma_{ss}}   \right) \label{dd2} 
\end{align}
where $\gamma$ and its derivatives are evaluated at $(s(t),t)$, where $s  = \gamma(\tacka,t)^{-1}(\Pi_t(x))$, that is $ s(t)= \argmin_{r \in [a,b]} d(x, \gamma(t,r)) $. 
Note that for any $s \in (a,b)$ the set of points projecting onto $\gamma(s,t)$ satisfies
$\Pi_t^{-1}(\gamma(s,t)) \subset \gamma_s(s,t)^\perp$. 
Taking the derivative in $t$, and changing coordinates so that the approximation-error term is written as double integral, we obtain
$$ \frac{dE}{dt} = \int_a^b \left ( \lambda_1 \frac{\gamma_s}{|\gamma_s|} \cdot \gamma_{st} - 2 \, \alpha(s) \int_{\Pi_t^{-1}(\gamma)} \, (x - \Pi_t(x)) \cdot \gamma_t \, |1- \vec{\mathcal{K}}(s) \cdot (x - \Pi_t(x))| \, d\mu_s(x)  \right ) ds$$
where we have suppressed notation for dependence on $t$ (and in some places $s$), $|1+\vec{\mathcal{K}}(s) \cdot (\gamma(s)-x)|$ is the Jacobian for change of coordinates, and $\vec{\mathcal{K}}$ is the curvature vector of $\gamma$.  Here we have used the disintegration theorem (see for example pages 78-80 of \cite{Del79}) to rewrite an integral for $\mu$ over $\RR^n$ as an iterated integral along slices orthogonal to the curve (more precisely over the set of points that project to a given point on the curve).
We have denoted by $\alpha$ the linear density of the projection of $\mu$ to $\Gamma_t$, pulled back to the parameterization of $\gamma$; that is
$\alpha:=d (\gamma(\tacka,t)^{-1} \circ \Pi_t)_\# \mu / d \mathcal{L}^1$. By $\mu_s$ we denote the probability measure supported on the slice $\Pi_t^{-1}(\gamma(s,t))$.
Integrating by parts we obtain
$$  \left. \frac{dE}{dt} \right|_{t=0} = \int_a^b \left (- \lambda_1 \vec{\mathcal{K}} \cdot \gamma_{t} - 2 \, \alpha(s) \int_{\Pi_t^{-1}(\gamma)} \, (x - \Pi_t(x)) \cdot \gamma_t \, |1- \vec{\mathcal{K}} \cdot (x - \Pi_t(x))| \, d\mu_s(x)  \right ) ds. $$ % - \gamma_s \cdot \gamma_t \big|_a^b.$$
We conclude $\gamma$ is a stationary configuration if and only if
\begin{equation}
\label{cp_cond}
 \lambda_1 \vec{\mathcal{K}}(s) = - 2 \, \alpha(s) \int_{\Pi_t^{-1}(\gamma)} \, (x - \Pi_t(x))\, |1-\vec{\mathcal{K}}(s) \cdot (x - \Pi_t(x))| \, d\mu_s(x)
 \end{equation}
for $\mathcal{L}^1-$ a.e. $s \in (a,b)$.

%%%%%%%%%%%%%%%%%
\subsection{ Second variation.}
In this section we compute the second variation of \eqref{ppc} for the purpose of providing conditions for linear stability. That is, we focus on the case that a straight line segment is a stationary configuration (critical point), and find when it is stable under the considered perturbations (when the second variation is greater than zero).  
%For simplicity we consider the situation that a straight line segment is (part of a) steady state and obtain conditions for its linear stability.
This has important implications for determining when the penalized principal curves start to overfit the data, and is further investigated in the next section.

If $\gamma$ is a straight line segment, $\vec{\mathcal{K}} = 0$, and (\ref{cp_cond}) simplifies to  
$$  \gamma(s) = \bar{x}(s) := \int_{\Pi_t^{-1}(\gamma(s,t))} x \,d\mu_s(x) $$
for $\mathcal{L}^1- $ a.e. $s \in (a,b) $ such that $\alpha(s) \neq 0$. This simply states that a straight line is a critical point of the functional if and only if almost every point on the line it is the mean of points projecting there. In other words, the condition is equivalent to $\gamma$ being a principal curve (in the original sense). 

The second variation of the length term is 
\begin{align} \label{length_2var}
\begin{split}
 \frac{d^2}{dt^2} \length(\gamma) & =  \int_a^b \left ( \frac{\gamma_{st}}{|\gamma_s|} - \frac{\gamma_s}{|\gamma_s|^2} \left ( \frac{\gamma_{s}}{|\gamma_s|} \cdot \gamma_{st} \right ) \right ) \cdot \gamma_{st} + \frac{\gamma_{s}}{|\gamma_s|} \cdot \gamma_{stt} \,ds \\
& = \int_a^b  \frac{1}{|\gamma_s|} \left ( |\gamma_{st}|^2 - \left ( \frac{\gamma_{s}}{|\gamma_s|} \cdot \gamma_{st} \right )^2 \right ) + \frac{\gamma_{s}}{|\gamma_s|} \cdot \gamma_{stt} \, ds 
\end{split}
 \end{align}
We note that $0 = (\gamma_s \cdot \gamma_t)_s = \gamma_{ss} \cdot \gamma_t + \gamma_s \cdot \gamma_{st}$, and therefore $\gamma_s \cdot \gamma_{st} = 0$, so that the second variation of the length term becomes just $|\gamma_{st}|^2$. Thus using (\ref{dd2}) we get 
\begin{equation}
\label{2var}
 \left. \frac{d^2E}{dt^2} \right|_{t=0} = \int_a^b \left ( \lambda_1 |\gamma_{st}|^2 + 2 \, \alpha(s) \int_{\Pi_t^{-1}(\gamma)} \left (|\gamma_t|^2 - \left ( (x - \Pi_t(x)) \cdot \gamma_{st} \right )^2 + \right )d\mu_s(x)  \right ) ds.
 \end{equation}

%%%%%%%%%%%%%%%%%%%%%%%%%%%%%%%%%%%%%%%%%%%%
\section{Relation between the minimizers and the data}  \label{sec3}

In this section, our goal is to relate the parameters of the functional, the length-scales present in the data, and the length-scales seen in the minimizers. To do so we consider examples of 
data and corresponding minimizers, use the characterization of critical points of \eqref{mppc}, and perform linear stability analysis.

\subsection{Examples and properties of minimizers}
Here we provide some insight as to how minimizers of (\ref{mppc}) behave. 
We start by characterizing minimizers in some simple yet instructive cases. In the first couple of cases we focus on the behavior of single curves, and then  investigate when minimizers develop multiple components.

\subsubsection{Data on a curve.}
\label{statcurv}
Here we study the bias of penalized principal curves when the data lie on a curve without noise. If $\mu$ is supported on the image of a smooth curve, and a local minimizer $\gamma$ of \eqref{ppc} is sufficiently close to $\mu$, one can obtain an exact expression for the projection distance. More precisely, suppose that for each $ s \in (a,b)$, only one point in $\supp \mu$ projects to it. That is  $\forall s \in (a,b)$, the set of $ x_s \in \supp \mu \,$ such that
$ s$ minimizes  $| x_s - \gamma(\hat s)|$ over $  \hat{s} \in [a,b]$ is a singleton. 
Then (\ref{cp_cond}) simplifies to 
%\begin{equation}
%\label{s_curvature}
$$ \lambda_1 \mathcal{K}(s) = 2 \alpha( s) h  (1+\mathcal{K}(s) h ) $$
%\end{equation}
where $h:= |\gamma(s) - x_s|$, $\mathcal{K}$ denotes the unsigned scaler curvature of $\gamma$, and $\alpha$ is the projected linear density. Consequently 

\begin{equation}
\label{dist_eq}
h = \frac{1}{2 \mathcal{K}} \left ( \sqrt{1 + 2 \frac{\lambda_1 \mathcal{K}^2}{\alpha}} - 1 \right )
\approx
\begin{cases}
\sqrt{\frac{ \lambda_1}{2 \alpha}} & \te{if } \frac{1}{\mathcal{K}} \ll \sqrt{\frac{\lambda_1}{\alpha}}  \medskip \\
\frac{\lambda_1 \mathcal{K}}{2 \alpha} \quad & \te{if } \frac{1}{\mathcal{K}} \gg \sqrt{\frac{\lambda_1}{\alpha}}.
\end{cases}
\end{equation}
Note that always $ h \leq \sqrt{\frac{\lambda_1}{2 \alpha}} $. We illustrate the transition of the projection distance $l$
indicated in \eqref{dist_eq} with the example below.

\begin{example}\emph{Curve with decaying oscillations.}
We consider data uniformly spaced on the image of the function $\frac{x}{5} \sin(-4\pi\log(x))$, which ensures that the amplitude and period are decreasing with the same rate, as $x \to 0^+$. In Figure \ref{ex_27}, the linear density of the data is constant (with respect to arc length) with total mass $1$, and solution curves are shown for two different values of $\lambda_1$. For $x$ small enough the minimizing curve  is flat, as it is not influenced by oscillations whose amplitude is less than $\sqrt{\frac{\lambda_1}{\alpha}}$. As the amplitude of oscillations grows beyond the smoothing length scale the minimizing curves start to follow them. As $x$ gets larger and $\mathcal K$ becomes smaller, 
the projection distances at the peaks start to scale linearly with $\lambda_1$, as predicted by \eqref{dist_eq}. 
 %\blue
  Indeed, as $\mathcal{K}$ decreases to zero the ratio of the curvature of the minimizer to that of the data curve approaches one and  $\alpha$ converges to a constant,. Hence from \eqref{dist_eq} follows  that the ratio of the projection distances at the peaks converges to the ratio of the 
 $\lambda_1$ values.
%\blue One may observe in Figure \ref{ex_27} that the ratio of projection distances gets closer to $\frac{1}{4}$, which equals the ratio of the $\lambda_1$ values.
%\grn  (Since $\mathcal{K}$ corresponds to the curvature of the minimizing curves, the limit of the ratio may not be exactly $\frac{1}{4}$, as \eqref{dist_eq} could otherwise suggest.)
% [Changed since the previous wording was too strong. Do you think this is ok?]
 %\red Slav I modified the original  wording so that it better justifies the stronger statement. 
% \nc

\begin{figure}[!htbp]
\begin{center}
\centerline{\includegraphics[width=0.7\columnwidth]{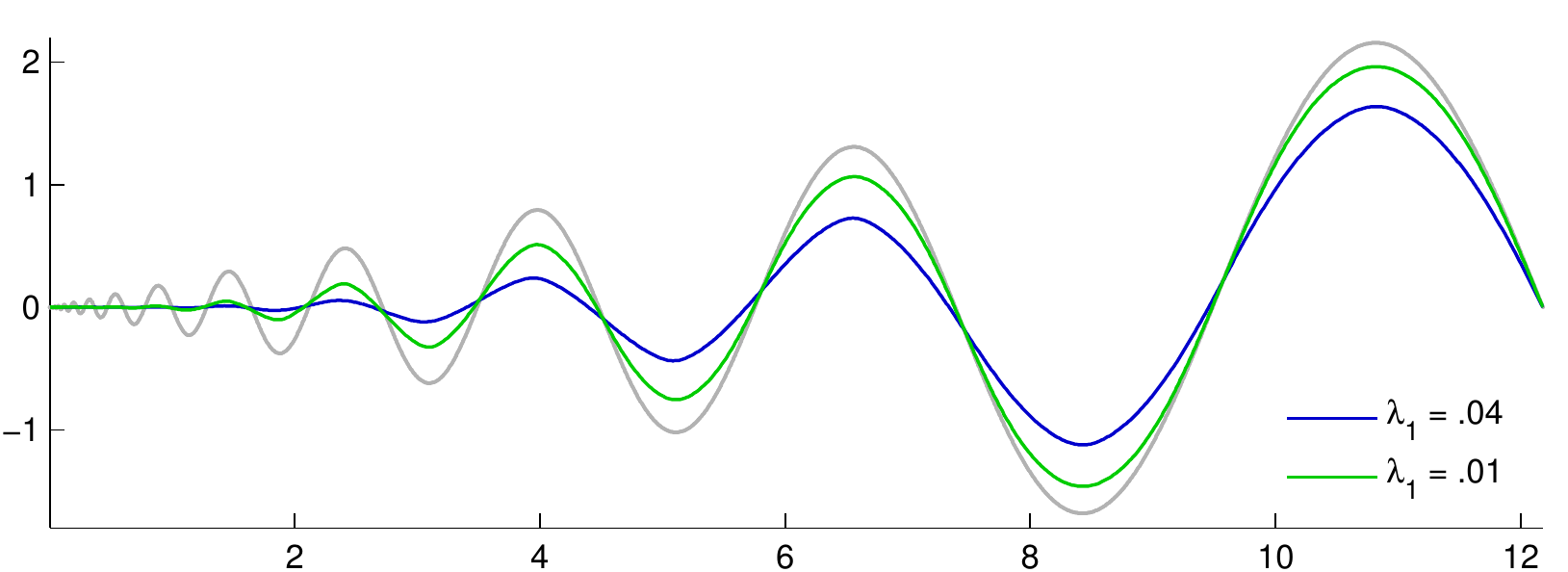}}
\centerline{\includegraphics[width=0.7\columnwidth]{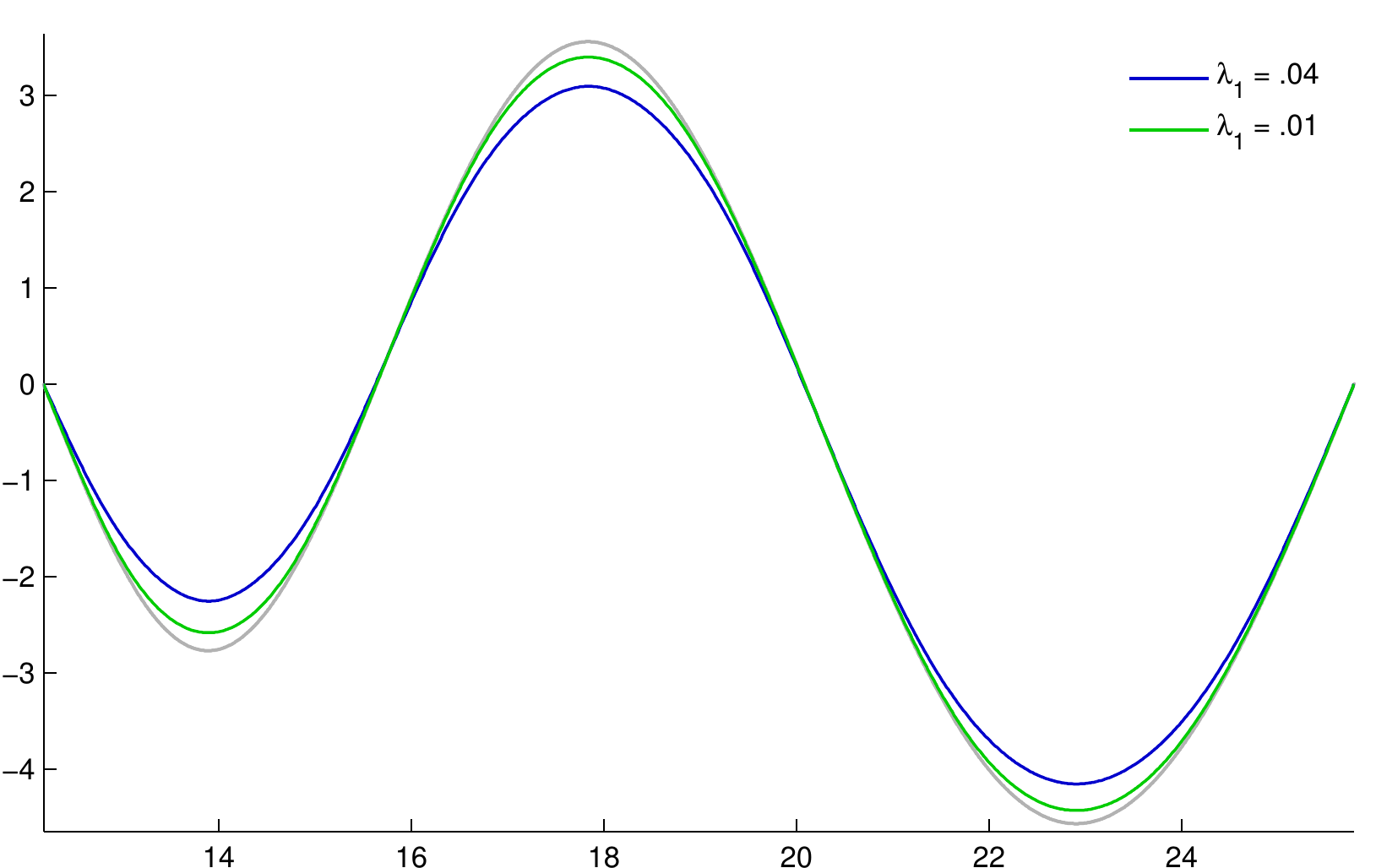}}
\caption{Numerical results shown for $n = 3000$ uniformly spaced data points (in gray) on the image of $\frac{x}{5} \sin(-4\pi\log(x))$ for $x \in [.001,e^{3.25}]$, and two different values of $\lambda_1$.}
\label{ex_27}
\end{center}
\end{figure} 

\end{example}

\subsubsection{Linear stability.}
\label{sec_linstab}
 In this section we establish conditions for the linear stability of penalized principal curves. For simplicity we consider the case when $\supp \mu \subset \R^2$. Suppose that $\gamma:[0,L] \to \R^2$ is arc-length parametrized and a stationary configuration of \eqref{ppc}, and that for some $0\leq a<b \leq L$, $\gamma([a,b])$ is a line segment. As previously, we let $\alpha$ denote the projected linear density of $\mu$ onto $\gamma$. 

We evaluate the second variation \eqref{2var} over the interval $[a,b]$, where the considered variations of $\gamma$ are $\gamma_t(s) = v(s) = (v_1(s),v_2(s))$, where $\gamma_s \cdot \gamma_t = 0$. Since $\gamma$ is a line segment on $[a,b]$, we can consider coordinates where $v_1(s) = 0$. We then have
$$ \left. \frac{d^2E}{dt^2} \right|_{t=0} = \int_a^b  \lambda_1 (v_2')^2 + 2 \alpha (s) \int_{\Pi_t^{-1}(\gamma)} \left (v_2^2 - \left ( v_2' (x - \Pi_t(x))  \right )^2 \right )d\mu_s(x)  \, ds.$$
We define the \emph{mean squared projection distance} 
\begin{equation}
\label{mspd}
H(s) := \left (\int_{\Pi_t^{-1}(\gamma)} (x - \Pi_t(x)) ^2 d\mu_s(x) \right )^\frac{1}{2}
\end{equation}
and obtain
\begin{equation}
\label{lin_stab}
 \left. \frac{d^2E}{dt^2} \right|_{t=0} = \int_a^b  \left ( \lambda_1 - 2 \alpha(s) H(s)^2 \right ) (v_2')^2 + 2 \alpha (s) v_2^2 \, ds.
 \end{equation}
We see that if $\lambda_1 \geq 2 \alpha(s) H(s)^2$ for almost every $s \in [a,b]$, then $\left. \frac{d^2E}{dt^2} \right|_{t=0} > 0$ and so $\gamma$ is linearly stable. 

On the other hand, suppose that $\lambda_1 < 2 \alpha(s) H(s)^2$ on some subinterval -- without loss of generality we take it to be the entire interval $[a,b]$. Consider the perturbation given by $v_2(s) = \sin(ns)$. Then the RHS of \eqref{lin_stab} becomes 
%\begin{equation} \label{lininst}
 $$ n^2 \int_a^b \left( \lambda_1 - 2 \alpha(s) H(s)^2 \right) \cos^2(ns) ds +  2  \int_a^b \alpha(s) \sin^2(ns)ds $$
% \end{equation}
and we see the first term dominates (in absolute value) the second for $n$ large enough. Hence line segment 
\begin{equation} \label{linstabcond}
 \gamma \te{ is linearly unstable on intervals where  } \lambda_1 < 2 \alpha H^2.
\end{equation} 

In the following examples we examine linear stability for some special cases of the data $\mu$. 

\begin{example}\emph{Parallel lines.}
We start with a simple case in which data, $\mu$, lie uniformly on two parallel lines. In Figure \ref{stab_lines} we show computed local minimizers starting with a slight perturbation of the initial straight line configuration, using the algorithm later described in Section \ref{sec:num}. The data lines are of length 2, so that $\alpha = 0.5$ for the straight line configuration. The parameter $\lambda_1=0.16$ and hence the condition for linear instability \eqref{linstabcond} of the straight line steady state becomes $0.4 < H$. The numerical results show that indeed straight line steady state becomes unstable when $H$ becomes slightly larger than $0.4$. 
%[I decided to go with your first suggestion, varying $H$ instead of $\lambda_1$ (since we already have an example with varying $\lambda_1$). I think it does not look that bad, although I am not set on the way the figure currently looks. I can make $H$ larger in both, or I can get rid of the left image entirely, and just say the minimizer for that case is nearly indistinguishable from the initial curve. Let me know what you think.]
\begin{figure}[!htb]

\minipage{0.5\textwidth}
  \includegraphics[width=\linewidth]{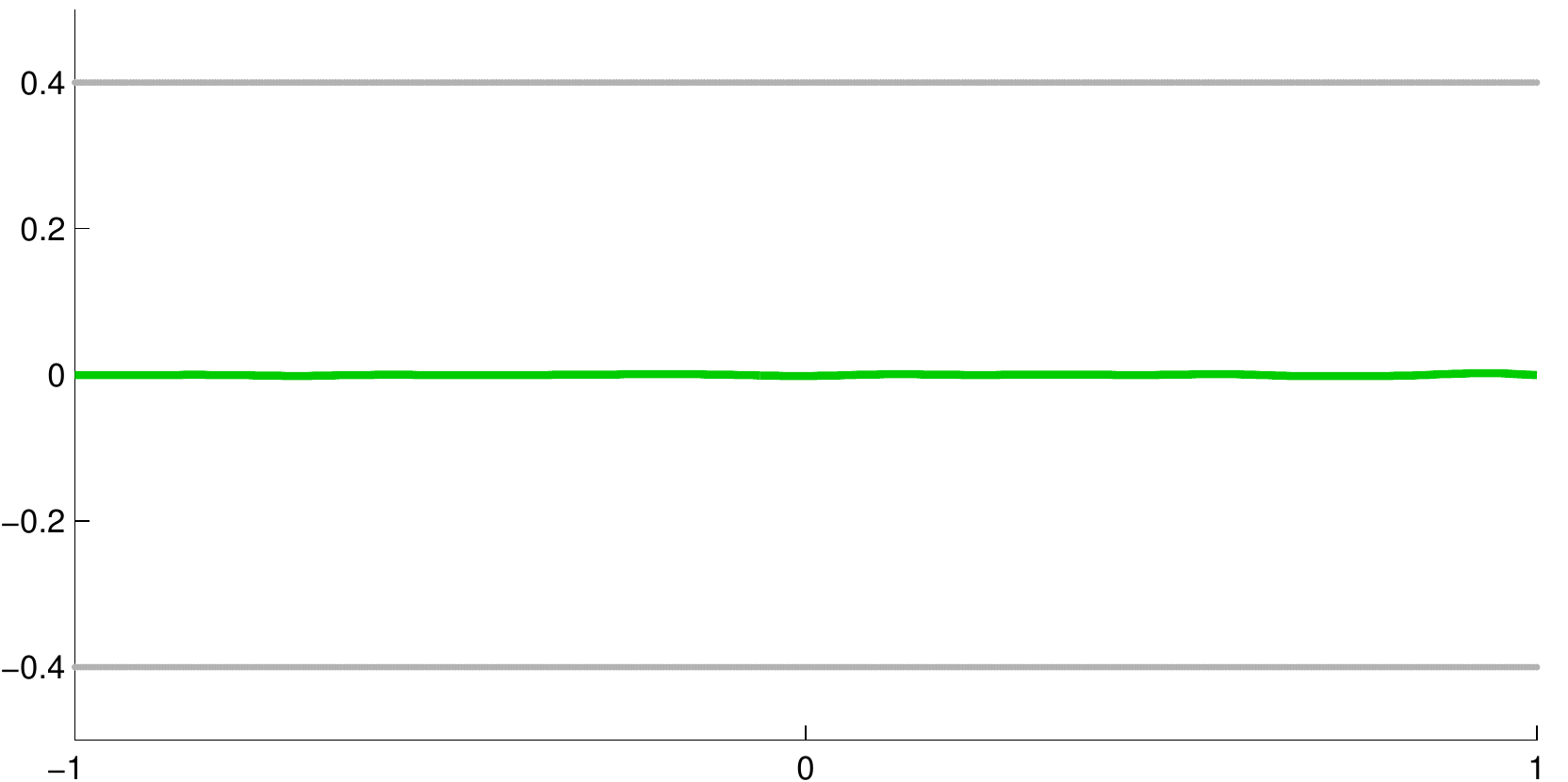}
\endminipage\hfill
\minipage{0.5\textwidth}
  \includegraphics[width=\linewidth]{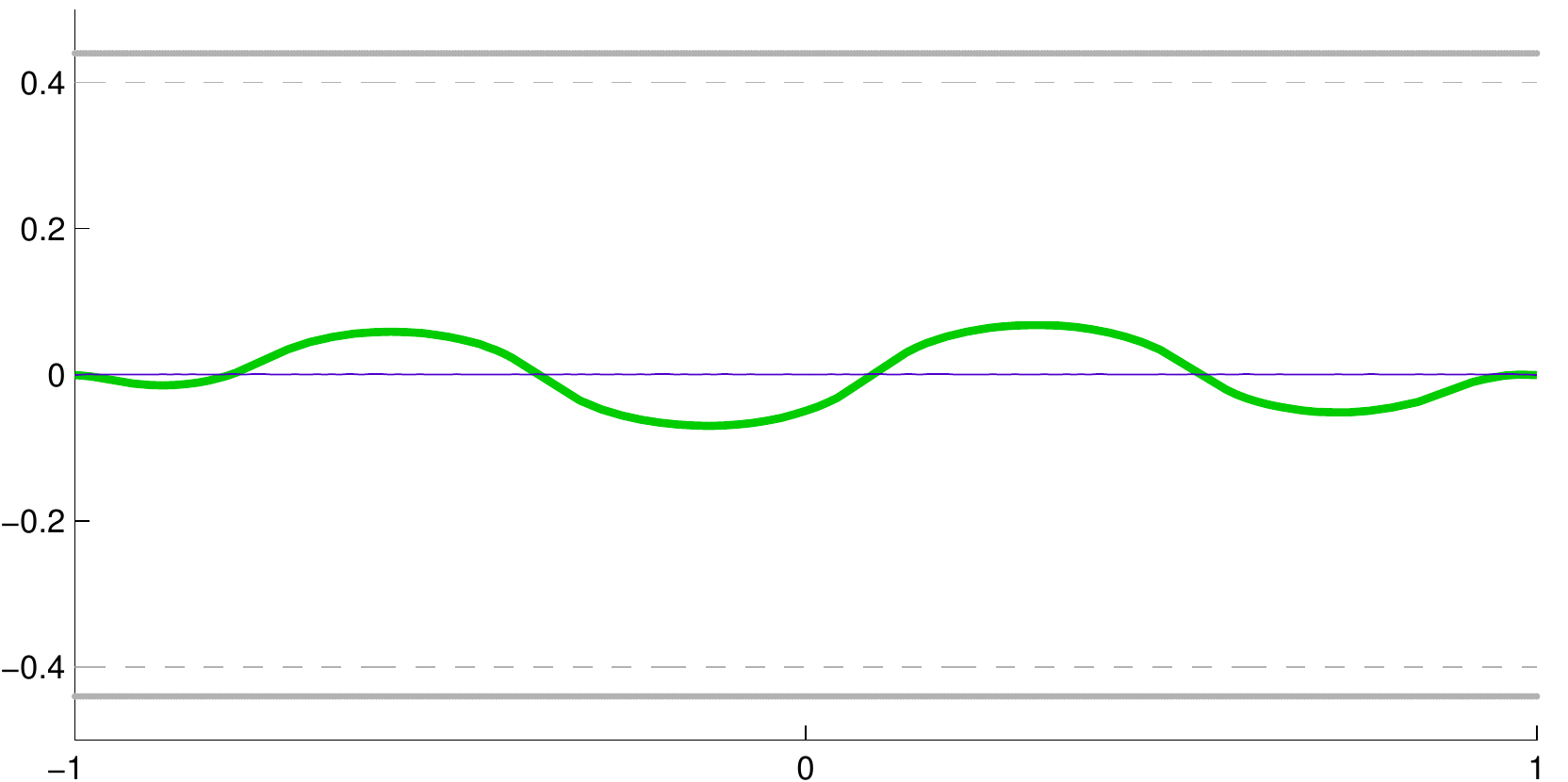}
\endminipage\hfill
\caption{The data are gray line segments at height $H=\pm 0.4$ on the left image and 
$H=\pm 0.44$ on the right image.  We numerically computed the local minimizers (green) of \eqref{mppc} among curves with fixed endpoints at $(-1,0)$ and $(1,0)$,
starting with slight perturbation of the line segment $[-1,1] \times \{0\}$.} 
\label{stab_lines}
\end{figure}
\end{example}

\begin{example}\emph{Uniform density in rectangle.} \label{udr}
Consider  a probability measure, $\mu$, with uniform density over $ [0,L] \times [0,2h]$ with $L \gg h$. Linear instability of the line segment  $\{\frac{1}{h}\} \times [0,L]$ (which is a critical point of \eqref{ppc}) can be seen as indication of when a local minimizer starts to overfit the data. It follows from \eqref{mspd} that $H^2 = \frac{1}{3} h^2$, and from \eqref{linstabcond} that $\lambda_1^* = \frac{2}{3L} h^2$ is the critical value for linear stability. 

In Figure \ref{ex_24}, we show the resulting local minimizers of \eqref{ppc} when starting from a small perturbation of the straight line, for several values of $\lambda_1$, for $h=\frac12$ and $L=4$. The results from the numerical experiment appear to agree with the predicted critical value of $\lambda_1^* =  1/24$, as the computed minimizer corresponding to $\lambda_1 = 1/27$ has visible oscillations, while that of $\lambda_1 = 1/23$ does not.

\begin{figure}[ht]
%\vskip 0.2in
\begin{center}
\centerline{\includegraphics[width=0.95\columnwidth]{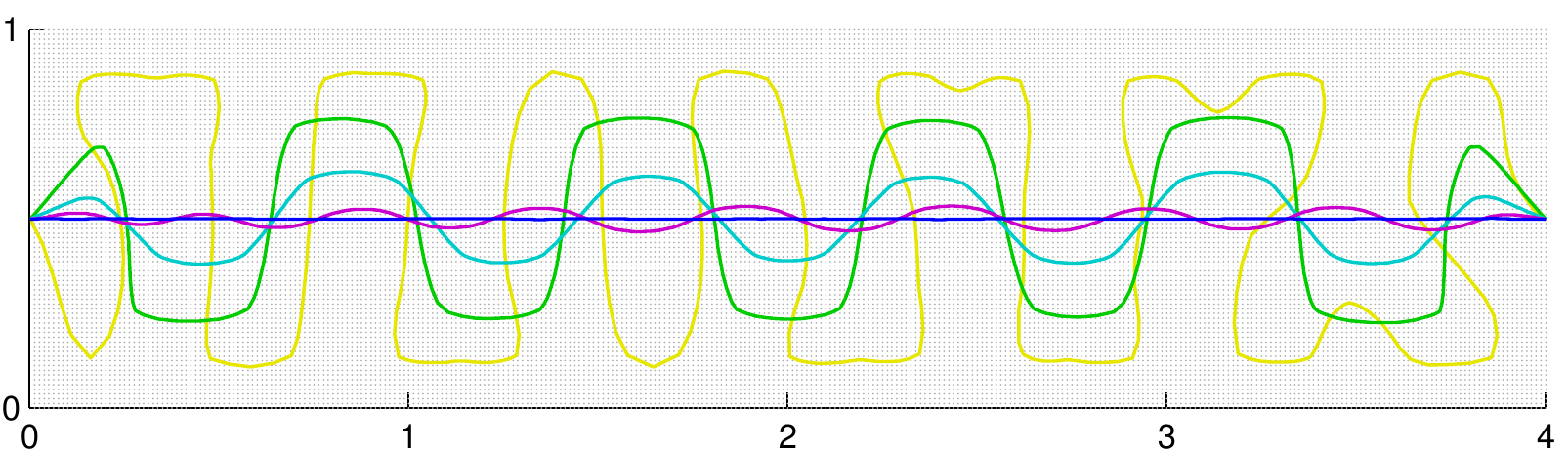}}
\caption{Numerical results showing local minimizers of \eqref{ppc} for various values of $\lambda_1$. 
The data are a grid of $n = 361\times81$ uniformly spaced points with total mass  equal to $1$.  Curves with decreasing amplitude correspond to $\lambda_1 = 1/1000, 1/150, 1/50, 1/27, 1/23$.  Recall that  the critical value for linear stability is $\lambda_1^* = 1/24$. The initial curve used for all results was a randomly perturbed straight line segment $ [0,4] \times \{\frac12\} $. % $y_0$ with $\max y_0 - \min y_0 \approx 0.005$, and 
The endpoints were kept fixed at $(0,0.5),(4,0.5)$ to avoid boundary effects.}
\label{ex_24}
\end{center}
%\vskip -0.2in
\end{figure} 
\end{example}

To illustrate how closely the curves approximate that data we consider average mean projection distance, $H$, for various values of $\lambda_1$. We expect that condition for linear stability \eqref{linstabcond}, which was derived for straight-line critical points applies, approximately, to curved minimizers. In particular we expect that curves where $H$ is larger than (approximately)  $\sqrt{\frac{\lambda_1}{2 \alpha}}$ will not be minimizers and will be evolved further by the algorithm. Here we investigate numerically if for minimizers $H \approx \sqrt{\frac{\lambda_1}{2 \alpha}}$, as is the case in one regime of \eqref{dist_eq}. Our findings are presented on  Figure \ref{lin_stab_proj_dist}.
\begin{figure}[!h]
\centerline{\includegraphics[width=0.6\linewidth]{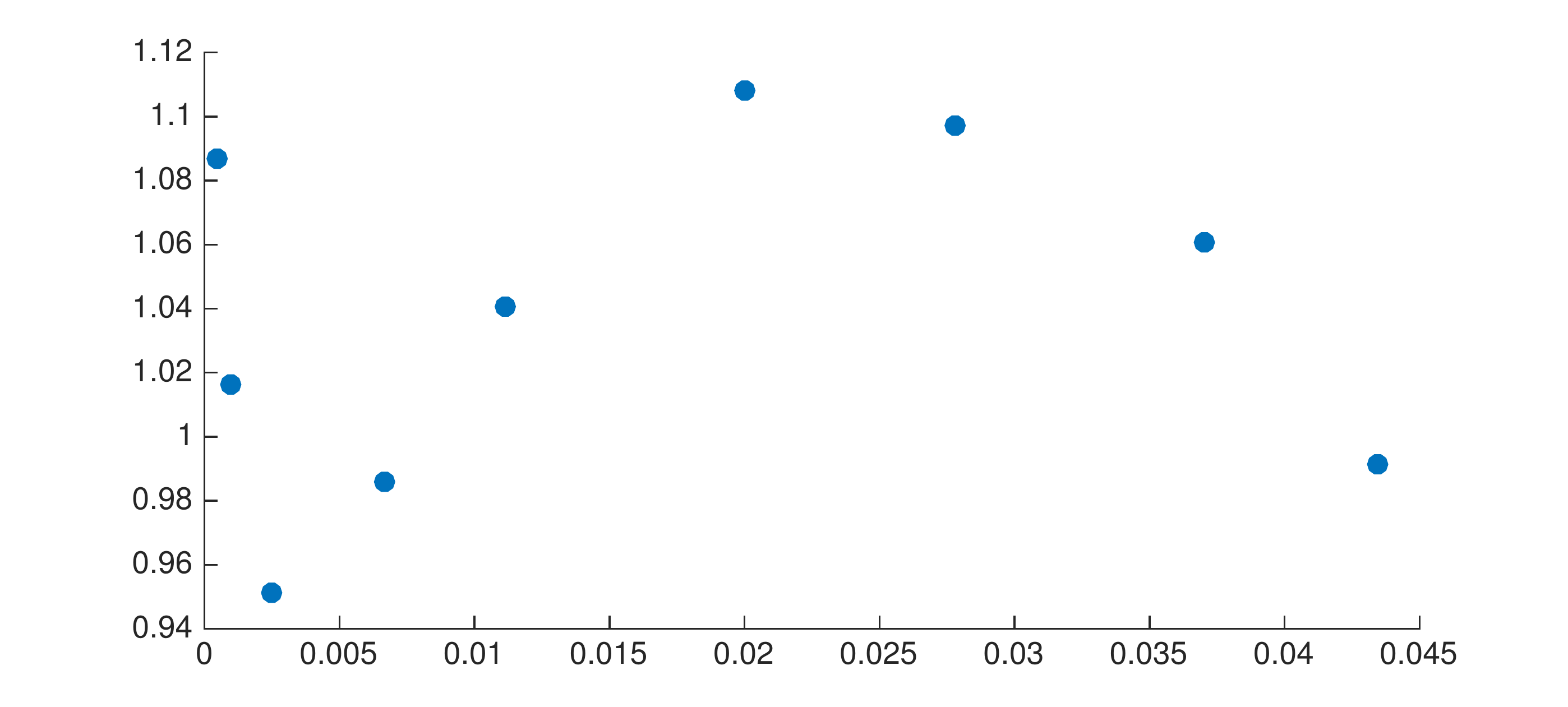}
\put(-300,70){\large $\frac{H}{\sqrt{\frac{\lambda_1}{2 \alpha}}}$}
\put(-130,-5){\large $\lambda_1$}
}
\caption{We compare the average mean projection distance $H$ (defined as average of \eqref{mspd}) to $\sqrt{\frac{\lambda_1}{2 \alpha}}$ (smoothing length scale)  for the Experiment \ref{udr}. We consider a somewhat broader set of $\lambda_1$ values than on Figure \ref{ex_24}.
We observe good agreement with the expectation, partly motivated by \eqref{linstabcond},  that $H \sim \sqrt{\frac{\lambda_1}{2 \alpha}}$. }
\label{lin_stab_proj_dist}
\end{figure}

%To illustrate how close the data are to the minimizing curve we plot the typical distance from data to the curve as a function of $\sqrt{\lambda_1}$ in Figure \ref{lin_stab_proj_dist}.
%\begin{figure}[!ht]
%\begin{center}
%\centerline{\includegraphics[width=0.4\linewidth]{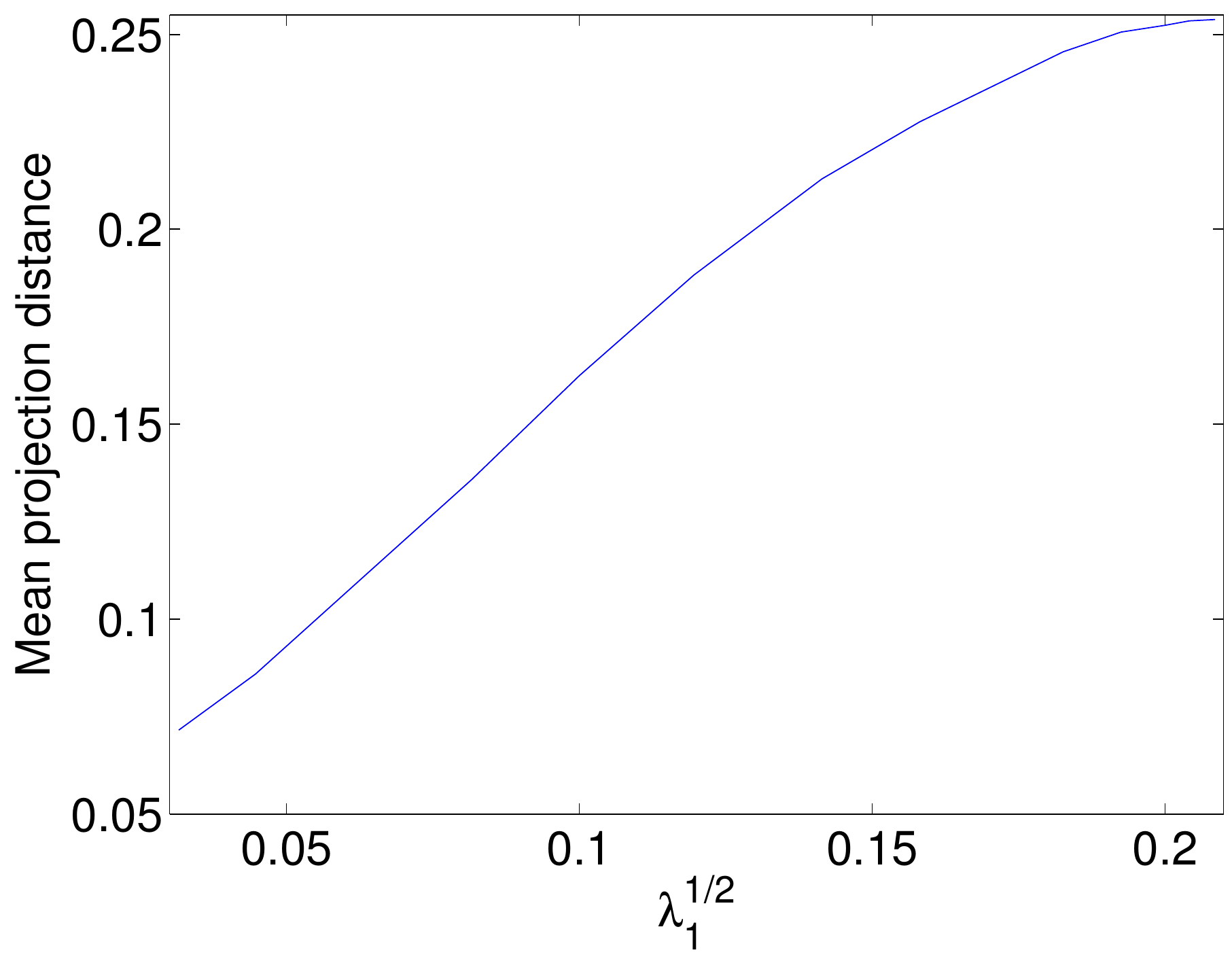}}
%\caption{ A plot of the average mean projection distance versus $\sqrt{\lambda_1}$ for the solution curves shown in Figure \ref{ex_24}. We expect that the average mean projection distance should be roughly $\approx \sqrt{\frac{\lambda_1}{\alpha}}$, where $\alpha$ is the projected density onto the solution curve. Note that $\alpha$ increases as $\lambda_1$ increases, since the length of $\gamma$ decreases. This explains why the slope of the plotted curve decreases as higher values of $\lambda_1$ are reached.}
%\label{lin_stab_proj_dist}
%\end{center}
%\end{figure} 

\begin{example}\emph{Vertical Gaussian noise.}
\label{vgn}
Here we briefly remark on the case that $\mu$ has Gaussian noise with variance $\sigma^2$ orthogonal to a straight line. We note that the mean squared projection distance $H$ is just the standard deviation $\sigma$. Therefore linear instability (overfitting) occurs if and only if $\lambda_1 < 2 \alpha \sigma^2$.
\end{example}
\nc

\subsubsection{Role of $\lambda_2$.}
\label{lambda2}
We now turn our attention to the role of $\lambda_2$ in \eqref{mppc}. Our goal is to understand when do transitions in the number of curves  in  minimizers occur. 

By direct inspection of \eqref{mppc}, it is always energetically advantageous to connect endpoints of distinct curves if the distance between them is less than $\lambda_2$. Similarly, it is never advantageous to disconnect a curve by removing a segment which has length less than $\lambda_2$. Thus $\lambda_2$ represents the smallest scale at which distinct components can be detected by the \eqref{mppc} functional. When distances are larger than $\lambda_2$, connectedness is governed by the projected linear density $\alpha$ of the curves, as we investigate with the following simple example. 

\begin{example}\emph{Uniform density on line.} 
\label{udl}
In this example, we consider the measure $\mu$ to have uniform density $\alpha$ on the line segment $[0,L]\subset \mathbb{R}$. We relegate the technical details of the analysis to Appendix \ref{apA}; here we report the main conclusions. By \eqref{app:alpha} there is a critical density 
\[  \alpha^* = \left ( \frac{4}{3} \right )^{2}  \frac{\lambda_1}{\lambda_2^2} \]
such that if $\alpha > \alpha^*$ then the minimizer $\gamma$ has one component and is itself a line segment contained in $[0,L]$.  It is straightforward to check that $\gamma$ will be shorter than $L$ by a length of $h = \sqrt{\lambda_1/\alpha}$ on each side. Note that at the endpoints $H^2  = h^2/3$, which is less than the upper bound at interior points predicted by \eqref{dist_eq}.

On the other hand, if $\alpha < \alpha^*$ and $L$ is long enough then the minimizer consists of regularly spaced points on $[0,L]$ with space between them approximately (because of finite size effects)
\begin{equation}
\label{opt_gap}  
\te{gap} \approx 2  \left ( \frac{3 \lambda_1 \lambda_2}{4 \alpha} \right )^{\frac{1}{3}}.
\end{equation}
An example of this scenario is provided in Figure \ref{ex_1}.

\begin{figure}[ht]
%\vskip 0.2in
\begin{center}
\centerline{\includegraphics[width=\columnwidth]{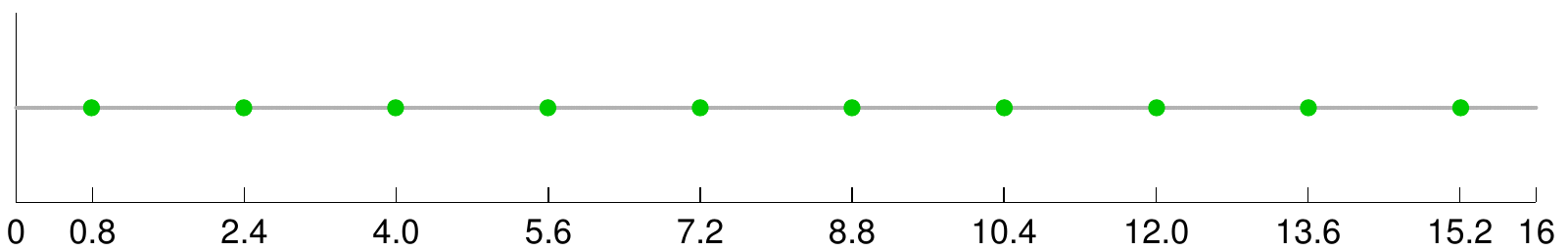}}
\caption{A minimizer for $n = 1000$ uniformly spaced points on a line segment, with total mass $1$. Here $\lambda_1 = 1/16,\, \lambda_2 = .6$ and the critical value for connectedness is $\lambda_2^* = 4/3$.  The optimal gap between the points is 1.6, compared to the approximation of $\approx 1.53$ given by (\ref{opt_gap}). The discrepancy is due to the finite length of the line segment considered in the example. }
\label{ex_1}
\end{center}
%\vskip -0.2in
\end{figure} 
\end{example}

\subsection{Summary of important quantities and length scales.} \label{sec:ls}
Here we provide an overview of how length scales present in the minimizers are affected by the parameters $\lambda_1$ and $\lambda_2$, and the geometric properties of data. We identify key quantities and length scales that govern the behavior of minimizers to \eqref{mppc}. We start with those that dictate the local geometry of penalized principal curves. 

%The following two quantities govern the geometric behavior observed in minimizers of \eqref{mppc}:
%length scales seen:
%The following length scales below explain the relation between one-dimensional data and the minimizers. 
\begin{itemize}
\item[$\sqrt{\frac{\lambda_1}{2\alpha}}$] ---  smoothing length scale (discussed in Sections \ref{statcurv} and \ref{sec_linstab} and illustrated in Example \ref{udr}). This scale represents the resolution at which data will be approximated by curves. 
Consider data generated by smooth curve with data density per length $\alpha$ and added noise (high-frequency oscillations, uniform distribution in a neighborhood of the curve, etc.). Then the noise will be "ignored" by the minimizer as long as its average amplitude (distance in space from the generating curve) is less than a constant multiple (depending on the type of noise) of  $\sqrt{\frac{\lambda_1}{2\alpha}}$. In other words  $\sqrt{\frac{\lambda_1}{2\alpha}}$ is the length scale over which the noise is averaged out. Noise below this scale is neglected by the minimizer, while noise above is interpreted as signal that needs to be approximated. For example, if we think of data as drawn by a pen, then $2 \sqrt{\frac{\lambda_1}{2\alpha}}$ is the widest the pen tip can be, for the line drawn to be considered a line by the algorithm.
% Slav, please do not change the above. If you want we can discuss this point.
%In Section \ref{statcurv}, we established that when data lie on a smooth curve (without noise) the projection distance $h$ to a minimizer is at most $\sqrt{\frac{\lambda_1}{2 \alpha}}$. In Section \ref{sec_linstab}, we found that linear instability occurs if the mean squared projection distance $H$ is greater than 
%$\sqrt{\frac{\lambda_1}{2\alpha}}$. Using linear instability, one can further informally argue that when the data do contain noise, $H^2$ will still be at most $\frac{\lambda_1}{2 \alpha}$. Minimizing curves must be linearly stable over all subintervals of their domain, and in particular, over small enough subintervals where the corresponding curve segments can be approximated by line segments. In Figure \ref{lin_stab_proj_dist}, we show the mean projection distance versus $\sqrt{\lambda_1}$ of the curves for the linearly stability Example \ref{udr}. 

\item[$\frac{\lambda_1 \mathcal K}{\alpha}$] --- bias or approximation-error length scale (discussed in Section \ref{statcurv}). 
Consider again data generated by smooth curve with data density per length $\alpha$ and curvature $\mathcal K$. If the curvature  of the curve is small (compared to $\sqrt{\frac{\lambda_1}{\alpha}}\,$) and reach is comparable to $1/\mathcal K$, then the distance from the curve to the minimizer is going to scale like $\frac{\lambda_1 \mathcal K}{\alpha}$. That is the typical error in reconstruction of a smooth curve that a minimizer makes (due to the presence of the length penalty term) scales like $\frac{\lambda_1 \mathcal K}{\alpha}$. 
%\blue
%When data lie on a smooth curve that is ``followed'' by a local minimizer with small curvature $\mathcal{K}$, the projection distance will be at most this quantity. 
%\grn [I changed this since $\mathcal{K}$ denotes the curvature of the minimizer, and not the data curve. We also don't define or use 'reach' anywhere else, so it might confuse readers who are unfamiliar with it. \red Slav I went back to the original explanation. Note that the assumptions are  such that the distance from the curve to the minimizer is  expected to be small compared to $1/\mathcal{K}$. Note that when one smoothly (in a quantitative sense) shifts the curve by am amount small compared to $1/\mathcal{K}$then the curvature does not change much. So the data curve and the minimizer will have similar curvature. 
%While this is not rigorous at this level, I much prefer an a-priory heuristic estimate on the bias that one can get only knowing the data curve than a perhaps more accurate estimate that depends on the unknown minimizing curve. ]
%\nc
\end{itemize}
\medskip

In addition to the above length scales, the following quantities govern the topology of multiple penalized principal curves:

\begin{itemize}
\item[$\lambda_2$] --- connectivity threshold (discussed in Section \ref{lambda2}). This length scale sets the minimum distance between distinct components of the solution. Gaps in the data of size $\lambda_2$ or less are not detected by the minimizer. Furthermore, this quantity provides the scale over which the following critical density is recognized. 

\medskip

\item[$ \frac{\lambda_1}{\lambda_2^2}$] --- linear density threshold (discussed in Example \ref{udl} and Appendix \ref{app:alpha}). 
Consider again data generated (possibly with noise) by a smooth curve (with curvature small compared to $\sqrt{\frac{\lambda_1}{\alpha}}$) with data density per length $\alpha$. If $\alpha$ is smaller than $  \alpha^* = \left ( \frac{4}{3} \right )^{2}  \frac{\lambda_1}{\lambda_2^2} + O(\mathcal K) $, then it is cheaper for the data to be approximated by a series of points than by a continuous curve.
That is if there are too few data points the functional no longer sees them as a continuous curve. 
If $\alpha > \alpha^*$, then the minimizers of \eqref{ppc} and \eqref{mppc} are expected to coincide, while if $\alpha < \alpha^*$, then the minimizer of \eqref{mppc} will consist of points spaced at distance  about 
$ \left( \frac{ \lambda_1 \lambda_2}{ \alpha} \right )^{\frac{1}{3}}$.
%\blue
% %Note that this length scale interpolates between the connectivity length scale and the length scale over which the noise is being averaged.
%In the case that data lie on a line segment with uniform density $\alpha$, we found that minimizers are connected if and only if $\alpha \geq \alpha^* := \frac{16}{9} \frac{\lambda_1}{\lambda_2^2}$. When there is noise, the critical linear density is the same due to the orthogonality of projections and the squared distance penalty in the functional. Furthermore, we note that non-straight configurations may be energetically unstable when $\alpha > \alpha^*$, since projection distances after disconnecting will be at most those in the straight line case. Hence $\alpha^*$ sets the threshold for disconnections to occur in minimizers. 
%\grn [Since we only did the analysis for a straight line, I think it's worth pointing out that the criteria is only one-sided when the minimizer is curved]
%\red Slav, please leave the wording as is. Note that I modified the definition of $\alpha^*$ to indicate 
%that for curved data it is approximate. 
%\nc
Note that the condition $\alpha < \alpha^*$ can also be written as $\sqrt{\frac{\lambda_1}{\alpha}} < \frac{3}{4} \lambda_2$, and thus the minimizer can be expected to consist of more than component if the connectivity threshold is greater than the smoothing length scale. 
\end{itemize}

 We also remark the following scaling properties of the functionals. Note that $E_{a\mu}^{\lambda_1,\lambda_2} = a E_\mu^{\lambda_1/a, \lambda_2}$ for any $a>0$. Thus, when the total mass of data points is changed, $\lambda_1$ should scale like $|\mu|$ to preserve minimizers. Alternatively, if $\mu_L(A) := \mu(\frac{A}{L})$ for every $A\subseteq \mathbb{R}^d$ and some $L>0$, one easily obtains that $E_{\mu_L}^{\lambda_1,\lambda_2}(L\gamma) = L^2 E_{\mu}^{\lambda_1/L,\lambda_2/L}(\gamma)$.

\subsection{Parameter selection}
Understanding the length scales above can guide one in choosing the parameters $\lambda_1, \lambda_2$. 
Here we discuss a couple of approaches to selecting these parameters. 
%Here we discuss a few approaches to selecting these parameters, depending on what kind of information the user may have about the data, or the properties one desires for the approximating curves. 
We will assume that the data measure $\mu$ has been normalized, so that it is a probability measure. 

A natural quantity to specify is a critical density $\alpha^*$, which ensures that the linear density of any found curve will be at least $\alpha^*$. From Section \ref{sec:ls} it follows that setting $\alpha^*$ imposes the following constraint on the parameters: $\frac{16}{9}  \frac{\lambda_1}{\lambda_2^2} = \alpha^*$. Alternatively, one can set $\alpha^*$ if provided a bound on the desired curve length -- if one is seeking a single curve with approximately constant linear density and length $l$ or less, then set $\alpha^* = l^{-1}$. 

There are a couple of ways of obtaining a second constraint, which in conjunction with the first determine values for $\lambda_1,\lambda_2$. 

\subsubsection{Specifying critical density $\alpha^*$ and desired resolution $H^*$.} 
One can set a desired resolution for minimizers by bounding the mean squared projection distance $H$. 
If $\alpha^*$ is set to equal the minimum of $\alpha$ along the curves then, the spatial resolution $ H $ from the data to minimizing curves is at most $ \sqrt{\frac{\lambda_1}{2 \alpha^*}}$.
Consequently, if one specifies $\alpha^*$ and desires spatial resolution $H^*$, or better, the desired parameters are:  
\[\lambda_1 = 2 \alpha^* H^{*2} \quad \te{ and } \quad \lambda_2 = \frac{4 \sqrt 2}{3} H^*. \]

Choosing proper $H^*$ depends on the level of noise present in the data. In particular, $H^*$ needs to be at least the mean squared height of vertical noise in order to prevent overfitting.

\subsubsection{Specifying critical density $\alpha^*$ and $\lambda_2$.} One may be able to choose $\lambda_2$ directly, as it specifies the resolution for detecting distinct components. In particular, there needs to be a distance of at least $\lambda_2$ between components, in order for them to detected as separate. Once set, $\lambda_1 = \frac{9}{16}  \alpha^* \lambda_2^2$.

Typically one desires the smallest (best) resolution $\lambda_2$, that does not lead to $\alpha^*$ larger than desired. Even if a single curve is sought, taking a smaller value for $\lambda_2$ can ensure less frequent undesirable local minima. 
 One case of this is later illustrated in Example \ref{parabola}, where local minimizers can oscillate within the parabola.
%Taking $\lambda_2$ small enough would not allow for the low density vertical segments to exist in a local minimizer. 
\nc

\begin{example} \emph{Line segments.}
Here we provide a simple illustration of the role of  parameters, using data generated by three line segments with noise. The line segments are of the same length, and the ratio of the linear density of data over the segments is approximately 4:2:1 (left to right). In addition, the first gap is larger than the second gap. Figure \ref{line_segs} shows how the minimizers of \eqref{mppc} computed depend on parameters used. In the Subfigures \ref{fig:threelinesa}, \ref{fig:threelinesb} \ref{fig:threelinesc} we keep $\lambda_1$ fixed while decreasing $\lambda_2$. As the critical gap length is decreased, and equivalently having more components in the minimizer becomes cheaper, the gaps in the minimizer begin to appear. It no longer sees the data representing one line but two or three separate lines. 
the only difference between functionals in  Subfigures \ref{fig:threelinesc} and  Subfigures \ref{fig:threelinesd} is that $\lambda_1$ is increased from $0.008$ to $0.024$. This results in length of the curve becoming more expensive. In Subfigure \ref{fig:threelinesd} we see that, due to low data density per length ($\alpha$), the minimizer approximates the two data patches to the right by singletons rather than curves.

%
%
%. As the linear density threshold increases, the number of detected components increases, where the two segments closest to each other are differentiated last. The last regime in Figure \ref{line_segs}(d) illustrates that increasing $\lambda_1$ and keeping $\lambda_2$ unchanged has the affect of increasing both the linear density threshold $\alpha^*$ and the smoothing length scale. 
\begin{figure}[!ht]\centering
\begin{subfigure}[$\lambda_1 = 0.008, \,  \lambda_2 = 0.7$]
{\includegraphics[width=.48\linewidth]{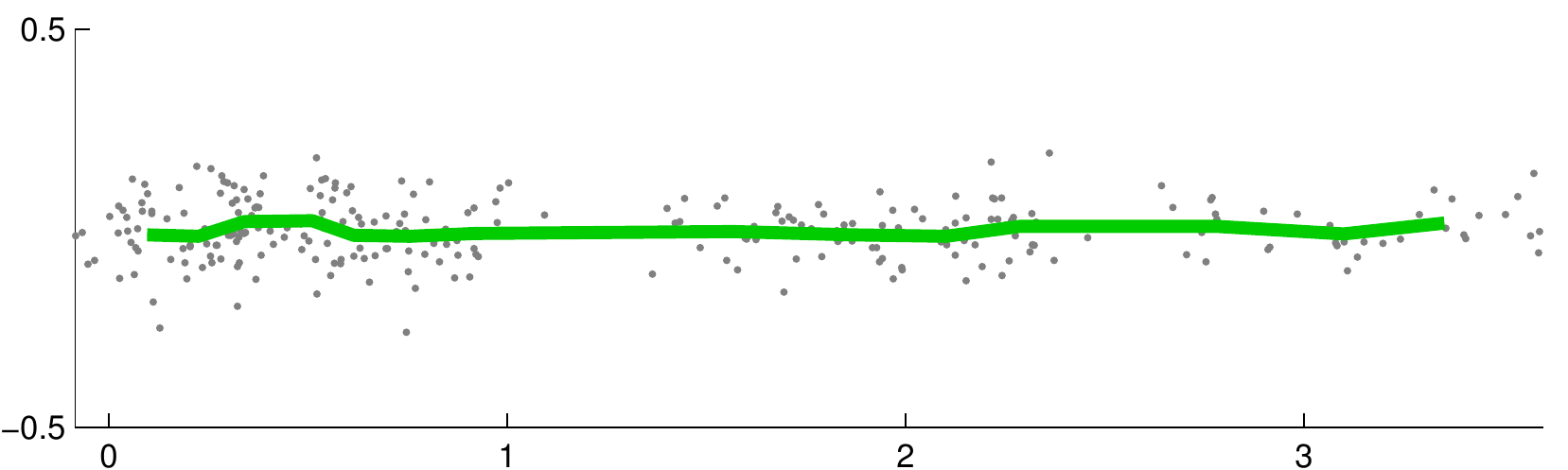}
\label{fig:threelinesa}}
\end{subfigure}
\hfill
\begin{subfigure}[$\lambda_1 = 0.008, \,  \lambda_2 = 0.5$ ]
{\includegraphics[width=.48\linewidth]{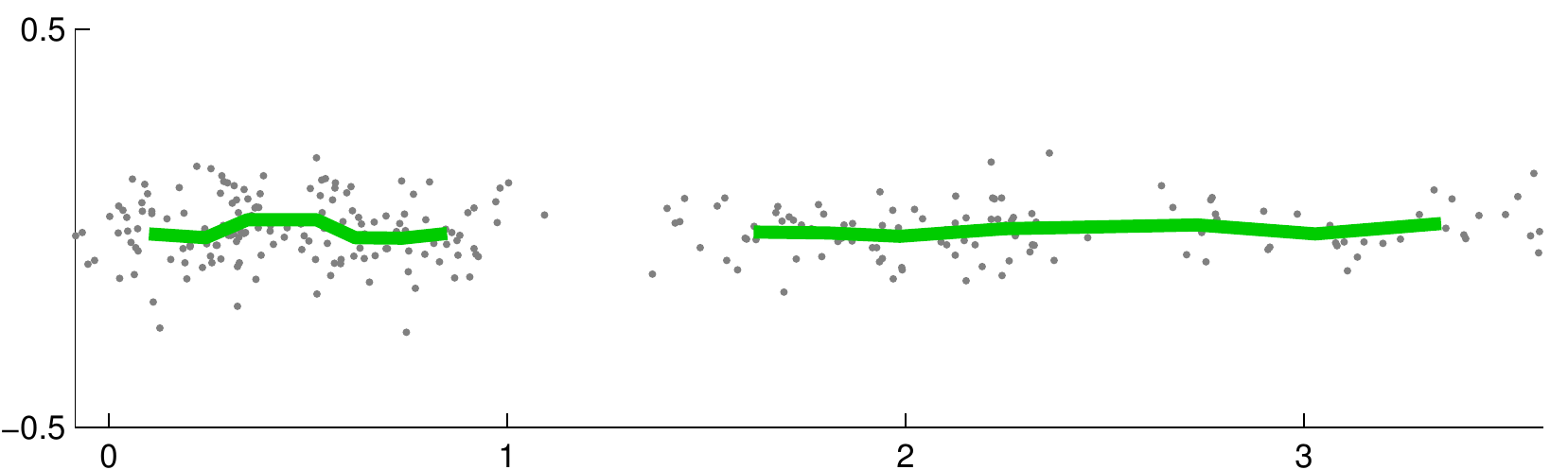}
\label{fig:threelinesb}}
\end{subfigure}
\begin{subfigure}[$\lambda_1 = 0.008, \,  \lambda_2 = 0.35$]
{\includegraphics[width=.48\linewidth]{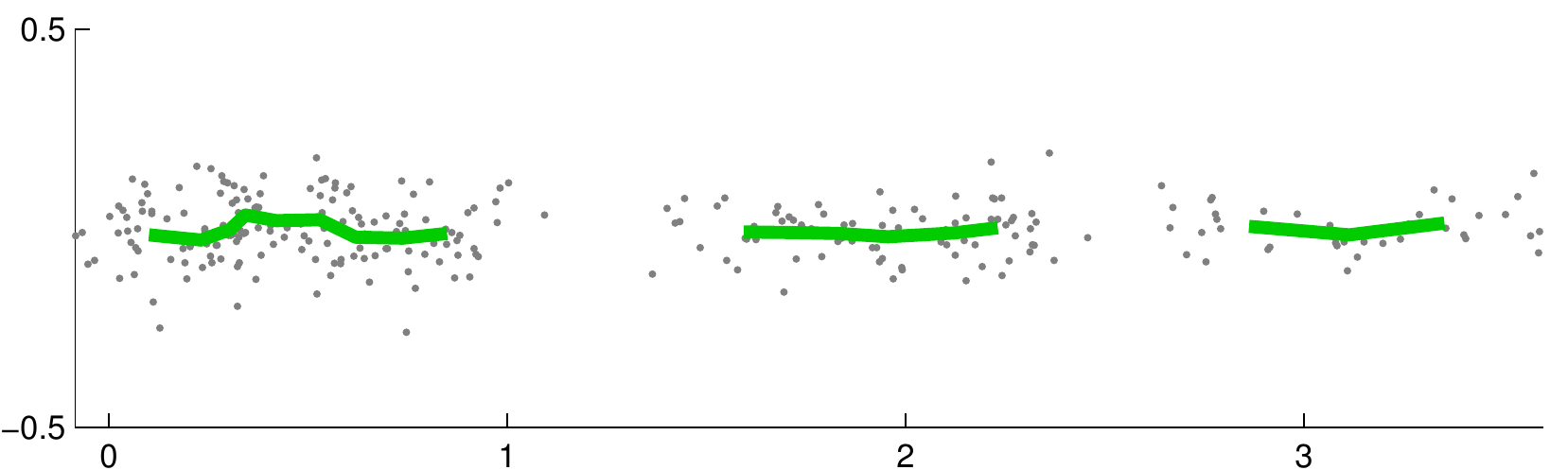}
\label{fig:threelinesc}}
\end{subfigure}
\hfill
\begin{subfigure}[$\lambda_1 = 0.024, \,  \lambda_2 = 0.35$]
{
\includegraphics[width=.48\linewidth]{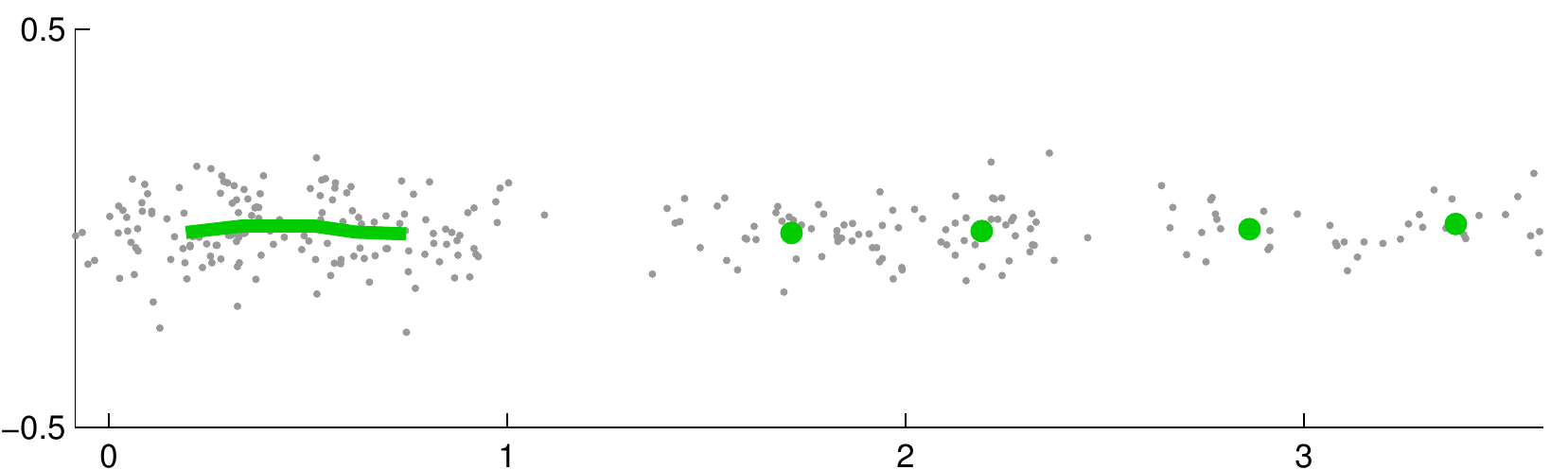}
\label{fig:threelinesd}}
\end{subfigure}
\caption{Minimizer of \eqref{mppc} shown for different parameter settings. $\lambda_1$ and $\lambda_2$. }
\label{line_segs}
\end{figure}

\end{example}
\nc

% 
%  here it would be nice to do an example. Perhaps on a river network or an image of a tree (leafless). See, and search for related images. Note that the nice rever networks are computed by knowing the terrain elevation, not from  satellite images alone.
% \begin{verbatim}
% http://cutcaster.com/vector/100597336-Cherry-tree-in-winter/
% http://www.dreamstime.com/stock-photo-cherry-tree-image8693070
%
% http://www.nature.com/ngeo/journal/v5/n10/pdf/ngeo1593.pdf
% http://onlinelibrary.wiley.com/doi/10.1029/2008EO100001/abstract
% http://onlinelibrary.wiley.com/doi/10.1029/2010WR010090/full
% http://hydrosheds.cr.usgs.gov/index.php
%
% http://rivix.com/index.php
% http://rivix.com/Gallery/Images/BC_River_Colored.png
%}
%\end{verbatim}
%\nc

%%%%%%%%%%%%%%%%%%%%%%%%%%%%%%%%%%%%%%%%%%%%%%%%%%%%%%%%
\section{Numerical algorithm for computing multiple penalized principal curves} \label{sec:num}

For this section we assume the data measure $\mu$ is discrete, with points $x_1,x_2,...,x_n \in \mathbb{R}^d$ and corresponding weights $w_1,w_2,...,w_n \geq 0$. The weights are uniform ($1/n$) for most applications, but we make note of our flexibility in this regard for cases when it is convenient to have otherwise.

For a piecewise linear curve $y = (y_1,...,y_m)$, we consider projections of data to $y_i$'s only. Hence, we approximate $d(x_i,y) \approx \min \{|x_i-y_j| : j=1,...,m\}$, unless otherwise stated. (Notation: when $a$ is a vector, as are $x_i,y_j$ in the previous line, $|a|$ denotes the Euclidean norm). Before addressing minimization of (\ref{mppc}), we first consider (\ref{ppc}) where $y$ represents a single curve.
 The discrete form is
\begin{equation}
\label{adp_discrete}
  \sum_{j=1}^m \sum_{i\in I_j} w_i |x_i-y_j|^2 + \lambda_1 \sum_{j=1}^{m-1} |y_{j+1}-y_j| \end{equation}
where 
\begin{equation} \label{Ij}
I_j := \{ i \: : \:  (\forall k =1,...,m) \;\,  |x_i - y_j| \leq |x_i - y_{k}|\} 
\end{equation}
 represents the set indexes of data points  for which $y_j$ is the closest among $\{y_1, \dots, y_m\}$.
%Slav: Do we check if $I_j$ are disjoint? If we do not do it, then we should omit the lines below. \grn Yes, the dsearchn function in MATLAB ensures this.
In case that the closest point is not unique an arbitrary assignment is made so that $I_1,...,I_m$ partition $\{1,...,n\}$ (for example set $\tilde I_j = I_j \backslash \bigcup_{i=1}^{j-1} I_i$).

 %projecting to $y_j$ (in case of ties the assignment is unique and arbitrary so that $I_1,...,I_m$ partition $\{1,...,n\}$).
%(in the case of ties, the assignment is arbitrary so that $I_j \cap I_{j'} = \emptyset$, $\forall j \neq j'$, and $\bigcup _{j=1}^m I_j = \{1,...,n\}$

\subsection{Basic approach for minimizing  \ref{ppc}}
Here we restrict our attention to performing energy decreasing steps for the (\ref{ppc}) functional. We emphasize again that this minimization problem is non-convex. The projection assignments $I_1,...,I_m$ depend on $y$ itself. However, if the projection assignments are fixed, then the resulting minimization problem is convex. This suggests the following EM-type algorithm outlined in Algorithm \ref{alg_basic}.

\begin{algorithm}%[tb]
\label{alg_ppc}
   \caption{Computing local minimizer of \eqref{ppc}}
   \label{alg_basic}
\begin{algorithmic}
   \STATE {\bfseries Input:} data $x_1,...x_n$, weights $w_1,...,w_n$, initial curve $y_1,...,y_m$, $\lambda_1 > 0$
   \REPEAT
   \STATE 1. compute $I_1,...,I_m$ defined in \eqref{Ij}
   \STATE 2. minimize (\ref{adp_discrete}) for $I_1,...,I_m$ fixed as described in Section \ref{ppcadmm}
   \UNTIL{convergence}
\end{algorithmic}
\end{algorithm}

 Note that if the minimization of (\ref{adp_discrete}) is solved exactly, then Algorithm \ref{alg_basic} converges to a local minimum in finitely many steps (since there are finitely many projection states, which cannot be visited more than once). 

\subsubsection{Minimize functional with projections fixed} \label{ppcadmm}
We now address the minimization of (\ref{adp_discrete}) with projections fixed (step 2 of Algorithm \ref{alg_basic}). One may observe that that his subproblem resembles that of a regression, and in particular the fused lasso \cite{Tib05}.

To perform the minimization we apply the alternating direction method of multipliers (ADMM) \cite{admm_boyd}, which is equivalent to split Bregman iterations \cite{GolOsh09} when the constraints are linear (our case) \cite{Ess09}. We rewrite the total variation term as $||Dy||_{1,2} := \sum_{i=1}^{m-1} |(Dy)_i|$, where $D$ is the difference operator, $(Dy)_i = y_{i+1}-y_i$ and $|\cdot|$ again denotes the Euclidean norm. An equivalent constrained minimization problem is then 
\[
\min_{y,z \,:\, z=Dy} \quad \sum_{j=1}^m \sum_{i\in I_j} w_i |x_i-y_j|^2 + \lambda ||z||_{1,2}
\]
Expanding the quadratic term and neglecting the constant, we obtain
\begin{equation}
\label{constr}
\min_{y,z \,:\, z=Dy} \quad ||y||_{\bar{w}}^2 - 2(y,\bar{x})_{\bar{w}}+ \lambda ||z||_{1,2}
\end{equation}
where notation was introduced for total mass projecting to $y_j$ by $\bar{w}_j = \sum_ {i\in I_j} w_i$, center of mass $\bar{x}_j = \frac{1}{\bar{w}_j} \sum_{i\in I_j} w_i x_i$, and weighted inner product $(y,\bar{x})_{\bar{w}} = \sum_{j=1}^m \bar{w}_j (y_j, \bar{x}_j)$. One iteration of the ADMM algorithm then consists of the following updates:

\begin{enumerate}
\item {$ \,y^{k+1} = \text{argmin}_y \quad ||y||_{\bar{w}}^2 - 2(y,\bar{x})_{\bar{w}} + \frac{\rho}{2} ||Dy-z^k + b^k||^2$}
\item {$ \,z^{k+1} = \text{argmin}_z  \quad  \lambda ||z||_{1,2} + \frac{\rho}{2} ||Dy^{k+1}-z + b^k||^2$}
\item {$ \, b^{k+1} = b^k + Dy^{k+1}-z^{k+1} $}
\end{enumerate}
where $\rho > 0$ is a parameter that can be interpreted as penalizing violations of the constraint. As such, lower values of $\rho$ tend to make the algorithm more adventurous, though the algorithm is known to converge to the optimum for any fixed value of $\rho > 0$. 

The minimization in the first step is convex, and the first order conditions yield a tridiagonal system for $y$. The tridiagonal matrix to be inverted is the same for all subsequent iterations, so only one inversion is necessary, which can be done in $\mathcal{O} (md)$ time. In the second step, $z$ decouples, and the resulting solution is given by block soft thresholding
\[ z_i^{k+1} =
\begin{cases}
v_i^k - \frac{\lambda}{\rho} \, \frac{v_i^k}{||v_i^k||} \quad & \text{if} \quad ||v_i^k|| > \frac{\lambda}{\rho}  \\
0 &  \text{else} \\
 \end{cases}  \]
 where we have let $v_i^k = (Dy^{k+1})_i + b_i^k$. We therefore see that ADMM applied to (\ref{constr}) is very fast.
 
Note that one only needs for the energy to decrease in this step for Algorithm \ref{alg_basic} to converge to a local minimum. This is typically achieved after one iteration of ADMM. In such cases few iterations may be appropriate, as finer precision typically gets lost once projections are updated. On the other hand, the projection step is more expensive, requiring $\mathcal{O}(nmd)$ operations to compute exactly. It may be worthwhile to investigate how to optimize alternating these steps, as well as more efficient methods for updating projections especially when changes in $y$ are small. In our implementation we exactly recompute all projections, and if the resulting change in energy is small, we minimize (\ref{adp_discrete}) to a higher degree of precision (apply more iterations of ADMM before again recomputing projections). 

%For our purposes though, we apply the above iterations until the energy decreases, and then exactly recompute all projections. 

\subsection{Approach to minimizing \ref{mppc}}
\label{sec_mppc_numerics}
We now discuss how we perform steps that decrease the energy of the modified functional \eqref{mppc}. 
We allow $y = y_1,...,y_m$ to consist of any number, $k$, of curves, and we denote them $y^1 = (y_1,...,y_{m_1}), \,  y^2 = (y_{m_1+1},...,y_{m_1+m_2}),..., \, y^k = (y_{m-m_{k}+1},...,y_m)$, where $m_1+m_2+...+m_k = m$. The indexes of the curve ends are $s_c = \sum_{j=1}^c m_j$ for  $c=1,...,k$, and we set $s_0=0$.
The discrete of form of \eqref{mppc} can then be written as 
\begin{equation}
\label{mppc_discrete}
  \sum_{j=1}^m \sum_{i\in I_j} w_i |x_i-y_j|^2 + \lambda_1 \sum_{c=0}^{k-1} \sum_{j=1}^{m_{c+1}} |y_{s_c+j+1}-y_{s_c+j}| + \lambda_1 \lambda_2 (k-1).
\end{equation}
Our approach to (locally) minimizing the problem over $y,\, k,\, m_1,...,m_k$ is to split the functional into parts that are decreased over different variables. Keeping $k,\, m_1,...,m_k$ constant and minimizing over $y_1,...,y_m$ we can decrease \eqref{mppc_discrete} by simply applying step 1 and step 2 of Algorithm \ref{alg_basic} to each curve $y^i, \, i=1,...,k$ (note that step 2 can be run in parallel). To minimize over $k,\, m_1,...,m_k$ we introduce topological routines below that disconnect, connect, add, and remove curves based on the resulting change in energy. 

\subsubsection{Disconnecting and connecting curves} \label{sec:disccon}
Here we describe how to perform energy decreasing steps by connecting and disconnecting curves. We first examine the energy contribution of an edge $\{i,i'\} := [y_i,y_{i'}]$. To do so we compare the energies corresponding to whether or not the given edge exists. 
It is straightforward to check that the energy contribution of the edge $\{i,i'\}$ with respect to the continuum functional \eqref{mppc} is
$$\Delta E_{i,i'} := \lambda_1 |y_{i'}-y_i| - \lambda_1 \lambda_2 - \sum_{j \in I_{i,i'}} w_j \min \left( |y_i - \Pi_{i,i'}(x_j)|,|y_{i'} - \Pi_{i,i'}(x_j)| \right)^2$$
where $I_{i,i'}$ is the set of data points projecting to the edge $\{i,i'\}$, and $\Pi_{i,i'}$ is the orthogonal projection onto edge $\{i,i'\}$. Our connecting and disconnecting routines will be based on the sign of $\Delta E_{i,i'}$.
We note that above criterion is based on the variation of the continuum functional rather than its discretization 
\eqref{mppc_discrete}, in which projections to the vertices only (not edges) are considered.
Our slight deviation here is motivated by providing a stable criterion that 
%would not change if we had a finer discretization
is invariant to further discretizations of the line segment $[y_i,y_{i'}]$.
 % which is subject to change as explained later in sections \ref{reparam}, \ref{res_crit}. 
While we use the discrete functional to simplify computations in approximating the optimal fitting of curves, we will connect and disconnect curves based on the continuum energy \eqref{mppc}. 

We first discuss disconnecting. We compute the energy contribution for each existing edge and if $\Delta E_{i,i'} < 0 $, then we remove edge $\{i,i'\}$. Note this condition can only be true if the length of the edge is at least $\lambda_2$. It may happen that all edge lengths are less than $\lambda_2$, but that the energy may be decreased by removing a sequence of edges, whose total length is greater than $\lambda_2$. Thus, in addition to checking single edges, we implement an analogous check for sequences of edges. The energy contribution of a sequence of $k$ edges $\{i,i+1\},\{i+1,i+2\},...,\{i+k-1,i+k\}$ (including the corresponding interior vertices $y_{i+1},...,y_{i+k-1}$) is given by
\begin{align*}
 \Delta E_{i:i+k}  := \, & \lambda_1 \left ( \sum_{l=0}^{k-1} |y_{i+l+1}-y_{i+l}| -\lambda_2 \right ) \\
 & + \sum_{l=0}^{k-1} \sum_{j \in I_{i+l,i+l+1}} w_j \left ( \left (x_j - \Pi_{i+l,i+l+1}(x_j) \right )^2 - \left (\min\{|x_j - y_i|, |x_j - y_{i+k}|\}\right )^2 \right). 
 \end{align*}
%where $\Pi_{-\{i+l,i+l+1\}}$ is the projection onto edge $[y_{i+l}, y_{i+l+1}]$. 
The routine for checking such edge sequences is outlined in Algorithm \ref{alg_cutting}.

\begin{algorithm}%[tb]
   \caption{Removing appropriate edge sequences}
   \label{alg_cutting}
\begin{algorithmic}
   \STATE {\bfseries Input:} data $x_1,...x_n$, weights $w_1,...,w_n$, connected curve $y_1,...,y_m$, projections $I$, $\lambda_1,\lambda_2 > 0$
   \STATE  set $i = 1$
   \REPEAT
   \STATE  set $k=1$, $len = |y_{i+1}-y_i|$
   \REPEAT
   \STATE increment $k = k+1$, $len = len + |y_{i+k}-y_{i+k-1}|$
   \UNTIL {$len > \lambda_2$ (or $i+k =m$, in which case break)}
   \STATE compute $\Delta E_{i:i+k}$
   \IF {$\Delta E_{i:i+k} < 0$}
   \STATE remove edge sequence $\{i,i+1\},\{i+1,i+2\},...,\{i+k-1,i+k\}$
   \STATE advance $i = i+k-1$
   \ENDIF
   \STATE increment $i = i+1$
   \UNTIL{$i>m-1$}
\end{algorithmic}
\end{algorithm}

Connecting is again based on the energy contribution of potential new edges. We use a greedy approach to adding the edges. That is, we compute $\Delta E_{i,i'}$ for each potential edge $\{i,i'\}$, and add them in ascending order, connecting curves until no admissible energy-decreasing edges exist. We note that finding the globally optimal connections is essentially a traveling salesman problem, which is NP-hard.  More sophisticated algorithms could be used here, but the greedy search is simple and has satisfactory performance.

\subsubsection{Management of singletons:} \label{sec:single}
Here we describe the procedures for topological changes via adding and removing components of the multiple curves. This is achieved by adding singletons (curves whose range is just a single point in $\RR^d$), growing them into curves, and by removing singletons. Even if one is only interested in recovering one-dimensional structures, singletons may play a vital role. 
In particular, any low-density regions of the data (background noise or outliers) can often be represented by singletons in a minimizer of \eqref{mppc}, allowing the curves to be much less affected in approximating the underlying one-dimensional structure.
%Singletons can represent low-density regions of data, and divert the attraction of curves away from outliers and background clutter. This is especially important when low-density regions exist far from the one-dimensional structures. 

Below we provide effective routines for energy-decreasing transitions between configurations involving singletons.
For checking whether (and where) singletons should be added, we examine each point $y_i$ individually. If $y_i$ is itself not a singleton, we compute the expected change in energy resulting from  disconnecting $y_i$ from its curve, placing it at the mean $\bar{x}_i$ of the data that project to it, and reconnecting the neighbors of $y_i$, so the number of components only increases by one. The change in the fidelity term will be exactly \nc $-\bar{w}_i(\bar{x}_i-y_i)^2$, where $\bar{w}_i = \sum_{j \in I_j}w_j$ is the total mass projecting to $y_i$. Thus we add a singleton when 
$$ \lambda_1 \lambda_2 < \bar{w}_i(\bar{x}_i-y_i)^2 + \lambda_1 \left (|y_i-y_{\max(1,i-1)}| + |y_i-y_{\min(m,i+1)}| - |y_{\max(1,i-1)}-y_{\min(m,i+1)}| \right).$$

 If $y_i$ is itself a singleton, then one cannot exactly compute the change in the energy due to adding another singleton in its neighborhood without knowing the optimal positions of both singletons. We restrict our attention to the data which project onto $y_i$, and note that if those points are the only ones that project to the new singleton, then adding the singleton may be advantageous only if the fidelity term associated to $y_i$ is greater than $\lambda_1 \lambda_2$. If that holds, we perturb $y_i$ in the direction of one of its data points, place a new singleton opposite to $y_i$ with respect to its original position, and apply a few iterations of Lloyd's k-means algorithm (with $k=2$) to the data points that projected to $y_i$.
% (one iteration consists of computing the projections, and then setting the location of the points to the mean of the data projecting to them). 
We keep the two new points if and only if the energy decreases below that of the starting configuration with only $y_i$.

A singleton $y_i$ gets removed if doing so decreases the energy. That is if
$$ \lambda_1 \lambda_2 > \sum_{j\in I_i} w_j \left( |x_j-y_i|^2 - d(x_j,y_{-i})^2 \right ) $$
where $d(x_j,y_{-i}) := \min \{|x_j-y_{i'}| : i' \in [m], \, i' \neq i \}$.

Since singletons are represented by just a single point and cannot grow by themselves, we also check whether transitioning from singleton to short curve is advantageous. To do so we enforce that the average projection distance $\tilde{d}_i$ to a singleton $y_i$ is less than $\sqrt{\lambda_1/\tilde{\alpha}}$, which represents the expected spatial resolution, where $\tilde{\alpha} = \bar{w}_i / (4 \tilde{d}_i)$ is an approximation to the potential linear density. Thus we add a neighboring point to $y_i$ if  
$$\tilde{d}_i = \sum_{j\in I_i} w_j |x_j-y_i| > \lambda_1/ {\bar{w}_i}. $$
Since this is based on an approximation, we also explicitly compute the posterior energy to make sure that it has indeed decreased, and only in this case keep the change. 

Note that for each singleton $y_j$, minimizing the discrete energy $\eqref{adp_discrete}$ with projections fixed corresponds to placing $y_j$ at its center of projected mass ${\bar{w}_i}$. Hence for singletons Algorithm \ref{alg_basic} reduces to Lloyd's k-means algorithm. 

In summary, we have fast and simple ways to perform energy decreasing steps involving the $\lambda_2$ term of the functional. Even when minimizers are expected to be connected, performing these steps may change the topological structure of the curve, keeping it in higher density regions of the data, and consequently evading several potential local minima of the original functional (\ref{ppc}).

\subsection{Re-parametrization of $y$}
\label{reparam}
%In addition to addressing local minima regarding the global structure of the curve, it is also important to give consideration to potential local minima that occur on a smaller scale -- namely, the discrete parametrization of the curve.
In applying the algorithm described thus far, it may, and often does, occur that some regions of $y$ are represented with fewer points $y_i$ than others, even if an equal amount of data are projected to those regions.
That is, there is nothing that forces the nodes $y_i$ to be well spaced along the discretized curve. 
%This is made possible by that fact that each $y_i$ only sees the data that projects to itself. 
To address this, we introduce  criteria that $l_i \bar{w_i} $ be roughly constant for $i=1,...,m$, where $l_i =\frac{1}{2} \sum \{ |y_i-y_j| : j \in \{i-1,i+1\} \cap [1,m] \}$ and $\bar{w_i}$ is the total weight of points projecting to $y_i$. This condition is motivated  by finding for fixed $m$ the optimal spacing of $y_i$'s that minimizes the fidelity term of the discrete energy \eqref{mppc_discrete}, under the assumption that the data are  distributed with slowly changing density  in a rectangular tube around straight line $y$.

\subsection{Criteria for well-resolved curves}
\label{res_crit}
Here we discuss criteria for when a curve can be considered well-resolved with regard to the number of points $m$ used to represent it. One would like to have an idea of what conditions give an acceptable degree of resolution, without requiring $m$ too large and significantly increasing computational time. We suggest two such conditions. 

One is related to the objective of obtaining an accurate topological representation of the minimizer, specifically the number of components. In order to have confidence in recovering components at a scale $\lambda_2$, the spacing between consecutive points on a discretized curve should be of the same scale. Thus we impose that the average of the edge lengths is at most $\frac{\lambda_2}{2}$.

Another approach for determining the degree of resolution of a curve is to consider its curvature. One may calculate the average turning angle and desire that it be less than some value (e.g. $\frac{\pi}{10}$). If $\lambda_2$ is not small enough, the first condition will not guarantee small turning angles, and so we include this criterion as optional in our implementation.
We note that in light of the possible lack of regularity of minimizers \cite{Sle14}, it would not be reasonable to limit the maximal possible turning angle. 

If either of the above criteria are not satisfied, we add more points to the curves where we expect they would decrease the discrete energy the most. Consistent with the criteria above for re-parametrization of the curves, we add points along the curve where $l_i \bar{w_i} $ is the largest. 

\subsection{Initialization} \label{sec:ini}
Finally, we discuss initialization. While the procedures described above enable the algorithm to evade many undesirable local minima, initialization can still impact the quality of the computed local minimizers. One of the  simple ideas that we found to work very well is to initialize using singletons.
We note that when the number of singletons is a fixed number $k$ then minimizing \eqref{mppc}
reduces to minimizing the $k$-means functional. Thus to position the singletons for fixed $k$ 
we use the standard Lloyd's algorithm to find the $k$-means cluster centers. We denote the \eqref{mppc} energy of the $k$-means centers by $E(k)$.
To determine a suitable value of $k$ we  perform a line search by starting with $k=1$ and  double it as long as $E(k)$ decreases, and then halve the intervals until  
a (local) minimizer $k$ is found.  We list the steps in Algorithm \ref{alg_in}. 
%\grn [Did we settle on this initialization method? There was a hybrid method doing only the first phase here, followed by running ADMM and the checkSingletons routine until convergence, which I remember being faster and leading to (slightly) lower energy in most cases. The differences were not drastic, but I will double check since it was a couple of weeks ago.] \red Slav, I would prefer us not to mix initialization  amd the subsequent algorithm. Thus I would prefer initialization that does not use the main algorithm and ADMM.  i\nc
 
\begin{algorithm}
   \caption{Initializing with singletons}
\begin{algorithmic}
    \STATE {\bfseries Input:} data $x_1,...x_n$, weights $w_1,...,w_n$, and $\lambda_1, \lambda_2 > 0$.
    \STATE Set $k=1/2$, $E(k) = + \infty$
   \REPEAT
   \STATE Let $k = 2k$
   \STATE Compute the  $k$-means centers  $C_k= \{c_1, \dots, c_k\}$, and energy $E(k) :=  E_\mu^{\lambda_1, \lambda_2}(C_k)$
   \UNTIL ${E(k) > E(k/2)}$
   \STATE Let $k' = \lfloor \frac{k}{2} \rfloor$, $k'' = k'$
   \REPEAT
   \STATE Let $k'' = \lfloor \frac{k''}{2} \rfloor$, $k = k' + k''$
   \STATE Compute the $k$-means centers  $C_k= \{c_1, \dots, c_k\}$, and energy $E(k) :=  E_\mu^{\lambda_1, \lambda_2}(C_k)$
   \IF {$E(k) < E(k')$}
   \STATE $k' = k$
   \ENDIF 
   \UNTIL $k''=1$
   \STATE {\bfseries Output:} $ y = C_{k'}$
\end{algorithmic}
\label{alg_in}
\end{algorithm}
\nc

\subsection{Overview} Thus far we have described all of the main pieces of our algorithm to compute local minimizers of \eqref{mppc}. Here we describe how we put these pieces together. Algorithm \ref{alg_basic}, which includes ADMM for decreasing the discrete energy \eqref{adp_discrete}, 
computes approximate local minimizers of \eqref{ppc}. To approximate local minimizers of \eqref{mppc}, we break up the minimization into separate parts. One consists of a ``local'' step that updates the placement of each curve, and is accomplished by running the ADMM step of Algorithm \ref{alg_basic} on each curve. On the other hand, the inclusion of routines to disconnect, connect, add, and remove curves allows us to perform energy-decreasing steps of \eqref{mppc} in a more global topological fashion.
 
%Hence to compute local minimizers of \eqref{mppc}, we perform the (potentially) topology-changing routines outlined in \ref{sec:disccon}, \ref{sec:single} on a regular basis throughout the steps of Algorithm \ref{alg_basic}. 
%The optimal frequency at which each of these is run is unknown to us, but we have found that running the connecting and disconnecting routines at different times every 10 iterations has worked well. In most cases running the routine that checks singletons at a comparable frequency is also suitable. However, when overfitting occurs and $\lambda_2$ is small enough, several singletons may get added before being suboptimally connected. Thus a more cautious approach could be warranted for the addition of singletons. Checking (and modifying if necessary) the parametrization of the curves meanwhile can be done more often, and in our current implementation we run the routine every 5 iterations. 

We provide an general outline for finding local minimizers of \eqref{mppc} in Algorithm \ref{alg_mppc}. The (potentially) topology-changing routines outlined in \ref{sec:disccon}, \ref{sec:single} are run on a regular basis throughout the steps of Algorithm \ref{alg_basic}. 
In particular we run them every $top{\_}period = 10$ iterations and we run the reparameterization of curves every $reparam{\_}period = 5$ iterations.
The performance for different values, as well as for different order of operations was similar. 
%\blue 
%Some justification though for the given ordering is as follows. In the case that the initialization is poor, disconnecting first allows to split up curves before getting further attracted to a local minimum. Checking singletons before connecting makes certain any unfavorable singletons will be removed, and reduces the risk of suboptimal connections. Finally, re-parametrizing after connecting ensures new edges will be more finely represented. 
%\nc

\begin{algorithm}%[tb]
   \caption{Computing local minimizer of \eqref{mppc} $\:$ [Main Loop]}
   \label{alg_mppc}
\begin{algorithmic}
   \STATE {\bfseries Input:} data $x_1,...x_n$, weights $w_1,...,w_n$, initial curve $y_1,...,y_m$, $\lambda_1, \lambda_2 > 0$.
   \STATE set iter = 0
   \REPEAT
   \STATE 1. iter = iter + 1
   \STATE 2. compute $I_1,...,I_m$, defined in \eqref{Ij}
   \STATE 3. run ADMM on non-singleton curves to decrease energy (\ref{mppc_discrete}) as described in Section \ref{ppcadmm} 
   \STATE 4. replace $y_j$ by the center of mass of data points projecting to $y_j$: $y_j = \bar{x}_j$
   \IF  {iter + 4 = 0 (mod top{\_}period)}
   \STATE remove appropriate edge sequences as described in Section \ref{sec:disccon} and Algorithm \ref{alg_cutting}
   \ELSIF  {iter + 3 = 0 (mod top{\_}period)}
   \STATE add or remove appropriate singletons as described in Section \ref{sec:single}
   \ELSIF  {iter + 2 = 0 (mod top{\_}period)}
   \STATE add appropriate connections as described in Section \ref{sec:disccon}
   \ELSIF  {iter + 1 = 0 (mod reparam{\_}period)}
   \STATE add points and re-parametrize the curves if needed as described in Sections \ref{reparam}, \ref{res_crit}
   \ENDIF 
   \UNTIL{convergence}
\end{algorithmic}
\end{algorithm}

% The initial segments are then expected to grow and eventually connect along the data (under appropriate choices for $\lambda_1, \lambda_2$). 

%%%%%%%%%%%%%%%%%%%%%%%%%%%%%%%%%%%%%%%%%%%%%%%%%%
\subsection{Further numerical examples} \label{sec:ne}

We present a couple of further computational examples which illustrate the behavior of the functionals and the algorithm. For some of the examples, we include comparisons with results from other approaches including the Subspace Constrained Mean Shift algorithm and diffusion maps. 
%In this section we also include some comparisons with 
\begin{example} \emph{Parabola.}
\label{parabola}
We begin with an example that illustrates the cutting and reconnecting mechanism used in the Algorithm \ref{alg_mppc} for finding minimizers of \eqref{mppc}. We use data that are uniformly distributed on the graph of the parabola $x = y^2$ for $y \in [-3,3]$ and set $\lambda_1=0.12$ and $\lambda_2=4/3$. For illustration, we first run the Algorithm \ref{alg_mppc} for minimizing \eqref{ppc} (the  same as main loop of Algorithm \ref{alg_mppc} without allowing any topological changes) starting  form a small perturbation of the line segment $[0,9] \times \{0\}$. The result is shown on Figure \ref{parab1}. We then turned on the cutting routine, described in Algorithm \ref{alg_cutting}. The segments to be cut are indicated on Figure \ref{parab1} as dashed lines. Figure  \ref{parab2} shows a subsequent configuration, after a few steps of ADMM relaxation, but prior to reconnecting. Edges that are about to be added in the reconnection step (described in Section \ref{sec:disccon}) are shown as dashed blue lines. 
\begin{figure}[!htb]
 \centering
 \subfigure[Local minimizer of \eqref{ppc} with edges to be cut indicated.]{\includegraphics[width=0.32 \textwidth]{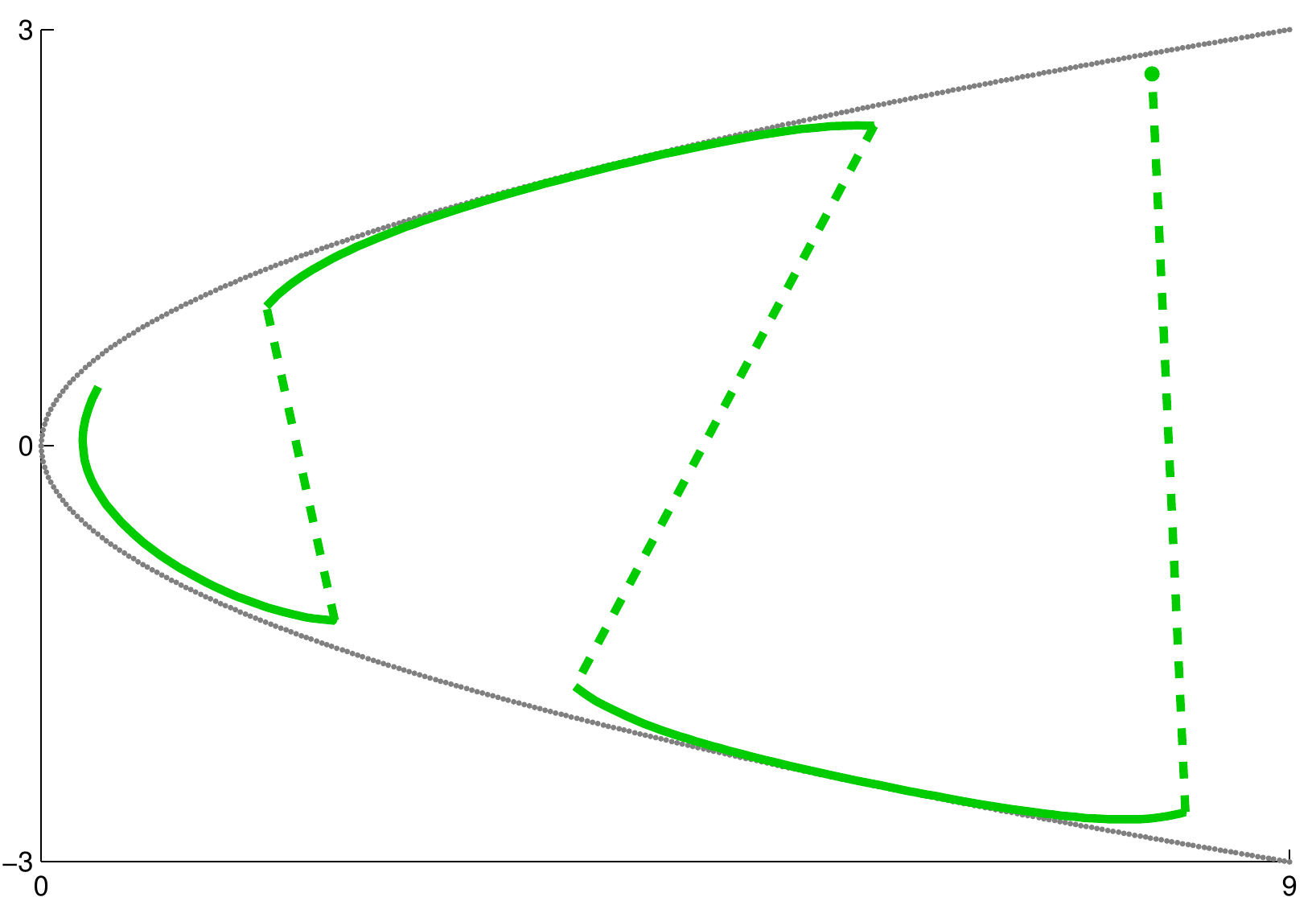} \label{parab1} }  
  %\hspace*{20pt} \nolinebreak
  \subfigure[Subsequent configuration, with edges to be added indicated.]{\includegraphics[width=0.32 \textwidth]{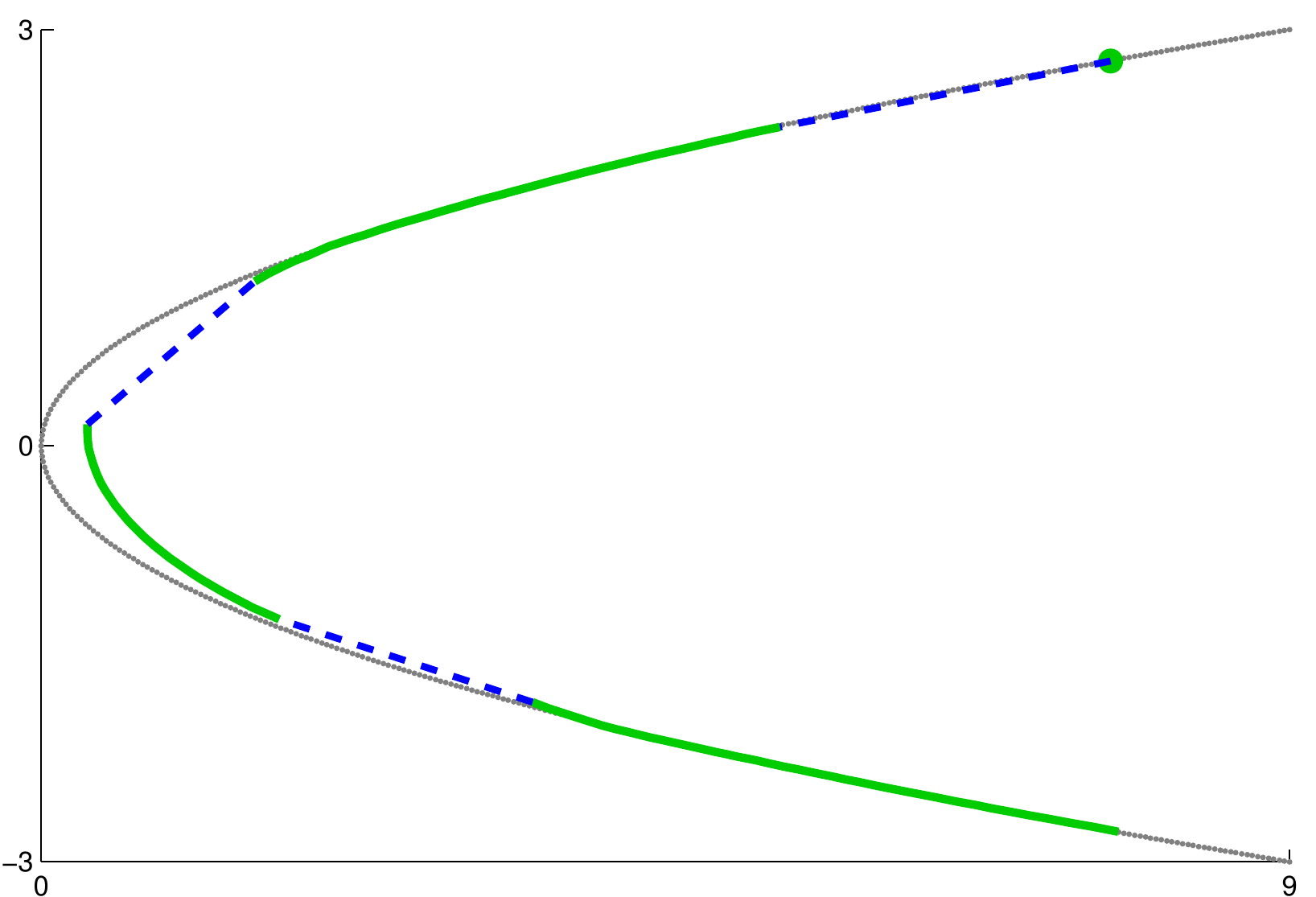}   \label{parab2}} 
  \subfigure[Minimizer of \eqref{mppc}]{\includegraphics[width=0.32 \textwidth]{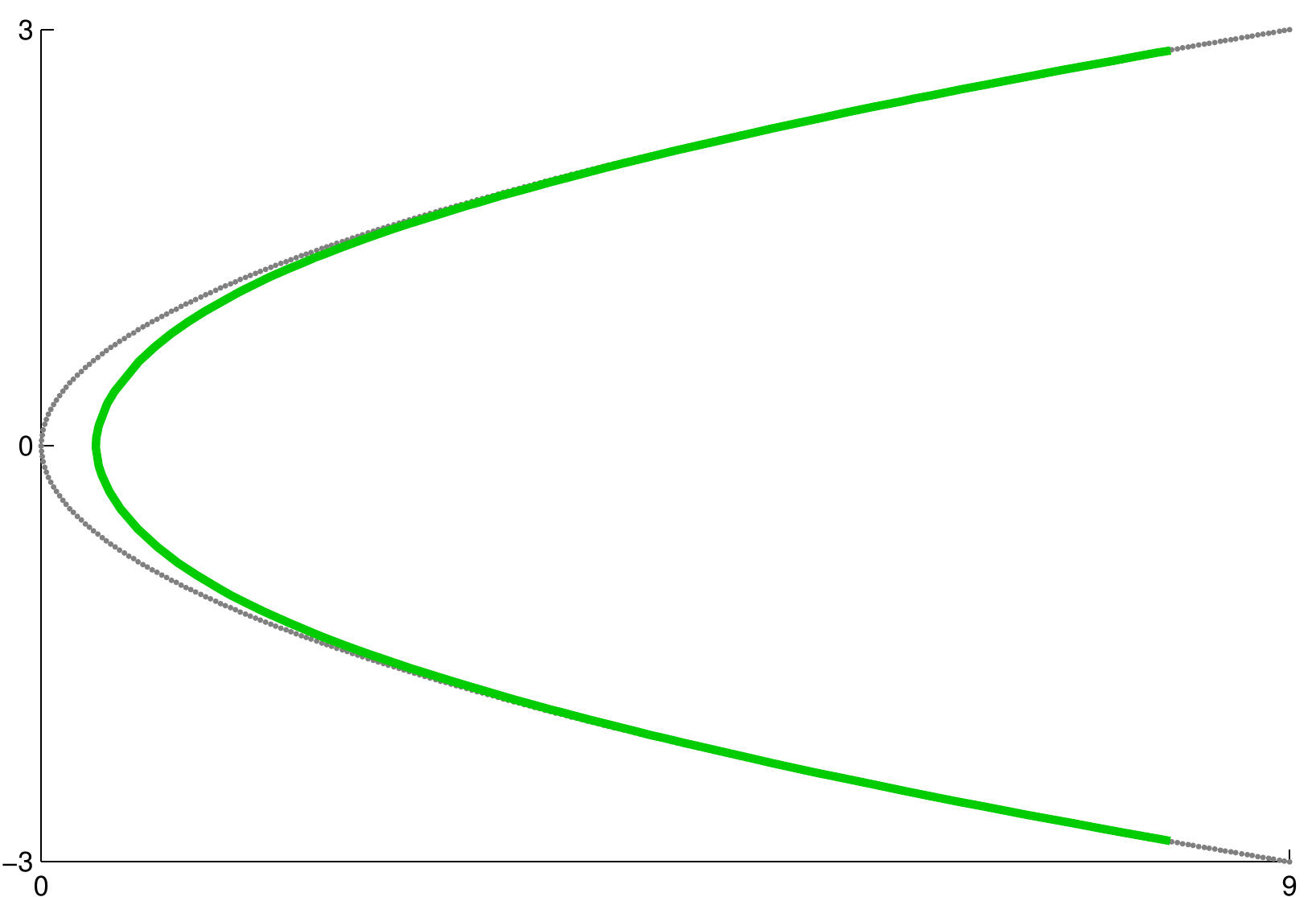}   \label{parab3}} 
\caption{}
\label{fig:parab}
\end{figure} 
\end{example}
\nc

\begin{example} \emph{Noisy spiral.}
\label{spiral}
Here we consider data generated as noisy samples of the spiral
$t \mapsto (t\cos(t),t\sin(t))$, $t \in [3,14]$, shown as a dashed line in Figure \ref{ex_6b}. 2000 points are drawn uniformly with respect to arc length along the spiral. 
For each of these points, noise drawn independently from the normal distribution $1.5 \, \mathcal{N}_2(0,1)$ is added. 
In Figure \ref{ex_6}, we show the results of algorithms for minimizing \eqref{ppc} and \eqref{mppc}. 
The initialization used for both experiments is a diagonal line corresponding to the fist principal component.
The descent for \eqref{ppc} does not allow for topological changes of the curve and subsequently gets attracted to a local minimum. 
Meanwhile, Algorithm \ref{alg_mppc} for minimizing \eqref{mppc} is able to recover the geometry of the data, via disconnecting and reconnecting the initial curve. 

\begin{figure}[!htb]
 \centering
 \subfigure[Local minimizer of \eqref{ppc}]{\includegraphics[width=0.44 \textwidth]{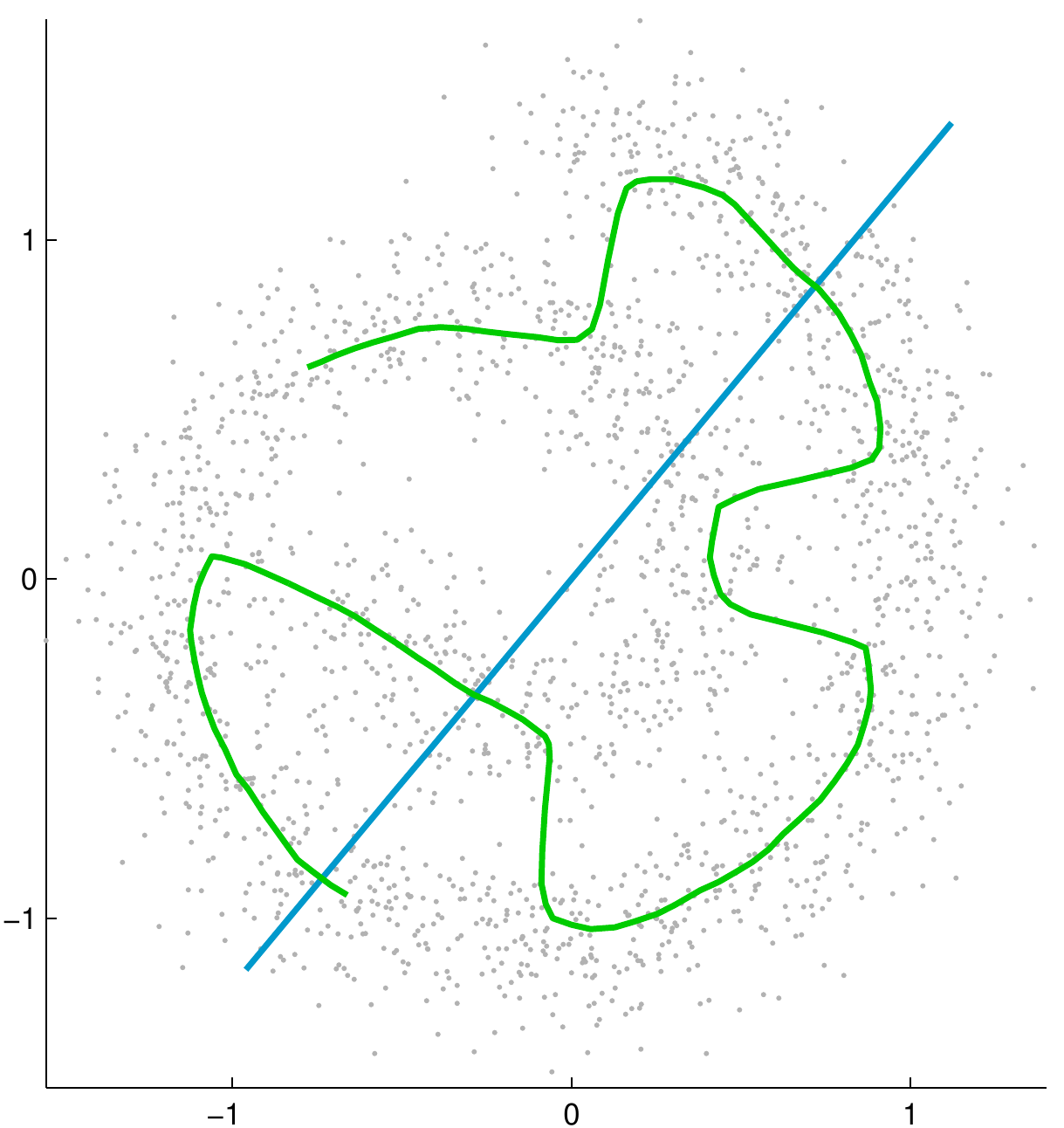} \label{ex_6a} }  
  \hspace*{20pt} \nolinebreak
  \subfigure[Local minimizer of \eqref{mppc}]{\includegraphics[width=0.44 \textwidth]{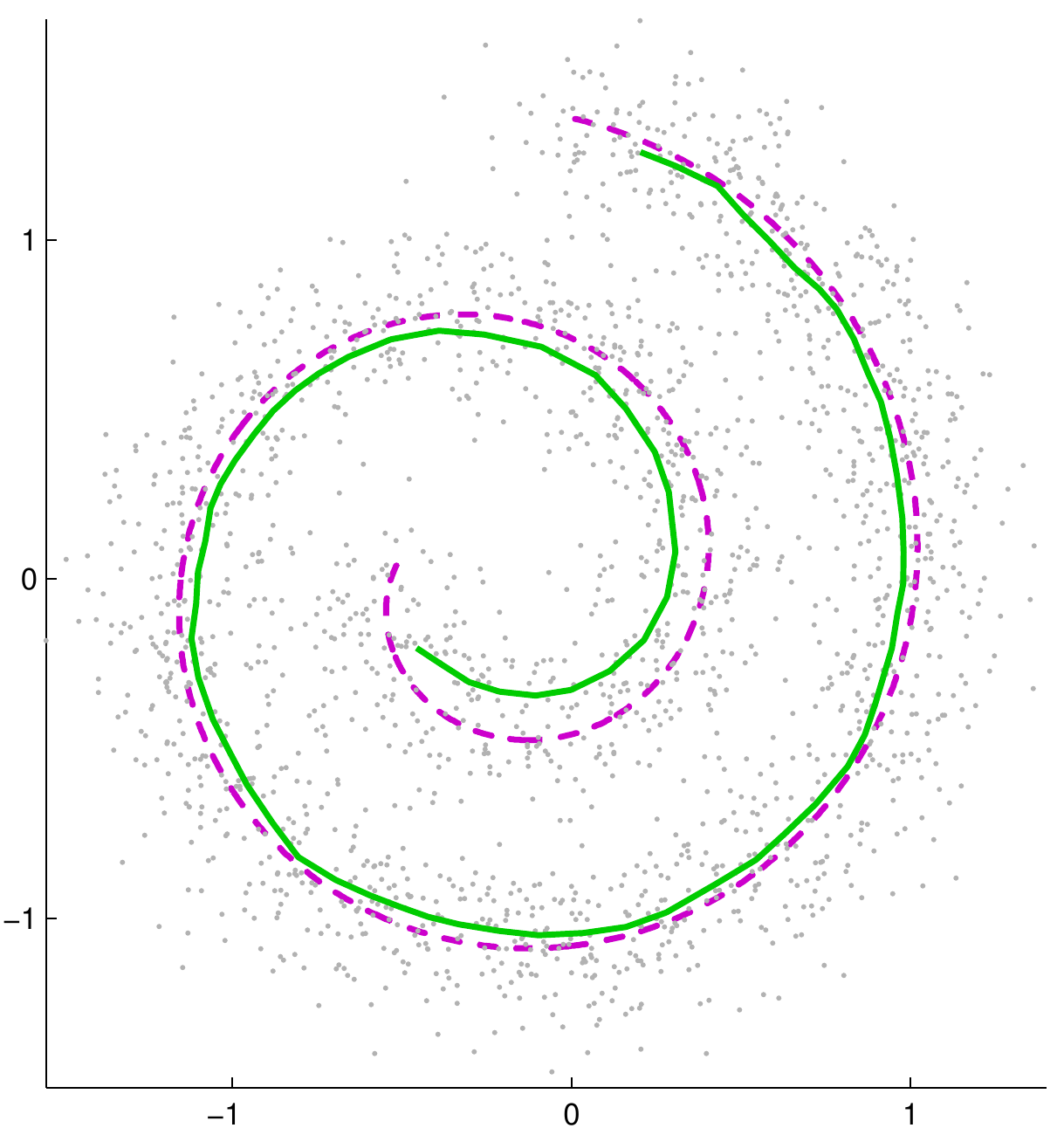}   \label{ex_6b}} 
\caption{Numerical results on data generated by a spiral (in purple) plus Gaussian noise. On the left, \eqref{ppc} is used to find the local minimizer given by the green curve, using the first principal component (blue) as initialization. On the right, \eqref{mppc} (with the same initialization) is minimized to find the green curve, using critical linear density $ \alpha^\star = .09$. In both cases $\lambda_1 = .01$.}
\label{ex_6}
\end{figure}

For this dataset we also include results of the Subspace Constrained Mean Shift (SCMS) algorithm \cite{oe11}, also studied in \cite{CGW15, chen15cosmic, genovese14} as means to find one dimensional structure in data. SCMS seeks to find the ridges (of an estimate) of the underlying probability density of the data.  The ridge set of a function $F : \R^d \to \R$ is defined as the set where $\nabla F$ is an eigenvector of the Hessian of $F$ and  the eigenvalues of all remaining eigenvectors are negative (the point is a local maximum along all orthogonal directions). In practice, given a random sample one uses a  kernel density estimator (KDE) t o approximate the probability distribution.  SCMS algorithm takes a set of points as input, and successively updates each point until it converges to a ridge point of the KDE of a specified bandwidth. The output is then a list of unordered points that approximate the ridge set. 
\begin{figure}[!htbp]
 \centering
% \subfigure[Output of SCMS]{\includegraphics[width=0.42 \textwidth]{scms1.pdf} \label{scms1} }  
  \subfigure[SCMS with bandwidth 0.15]{\includegraphics[width=0.42 \textwidth]{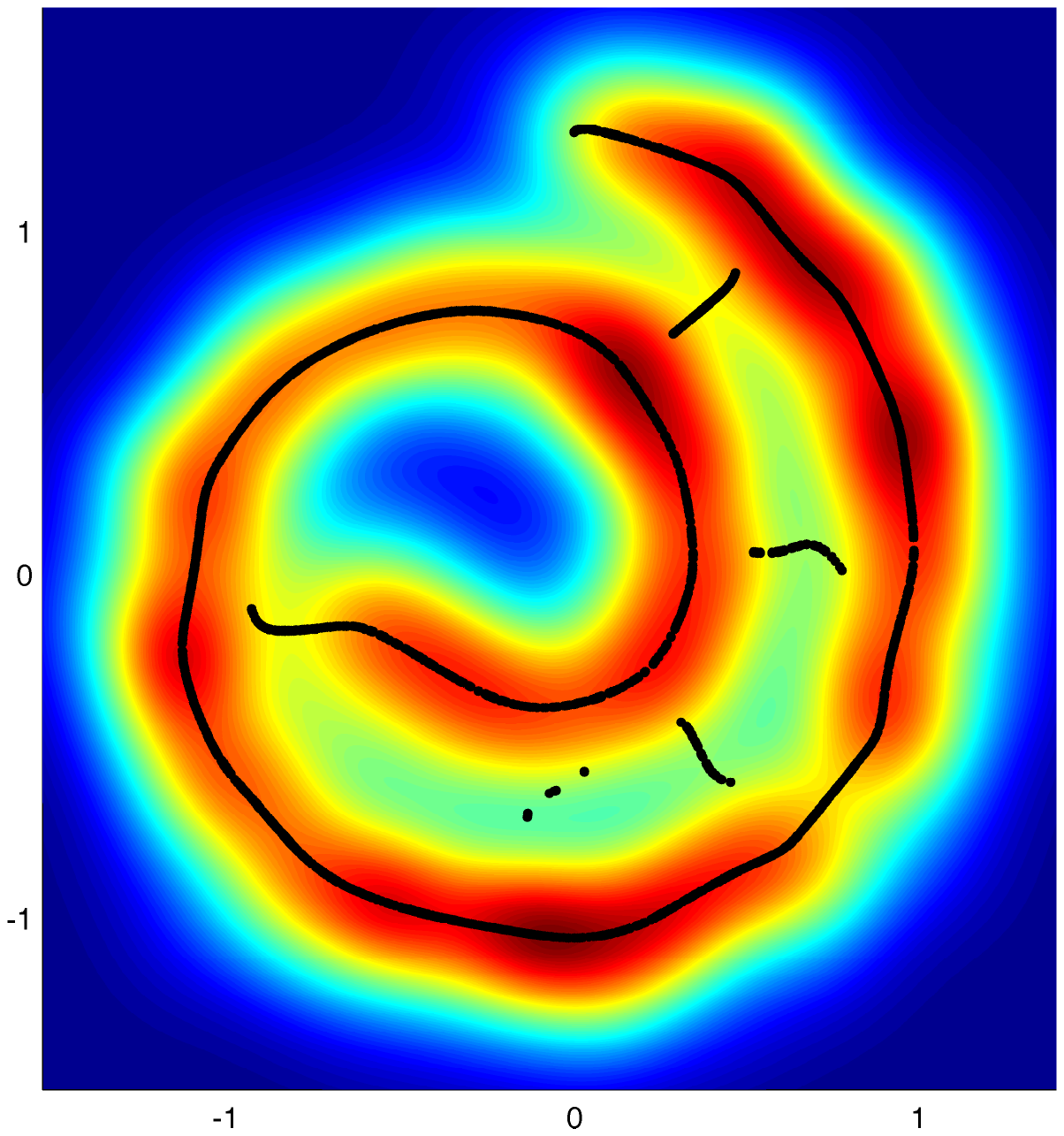}   \label{scmsh15}} 
  \hspace*{40pt} \nolinebreak
  \subfigure[SCMS with bandwidth 0.2]{\includegraphics[width=0.42 \textwidth]{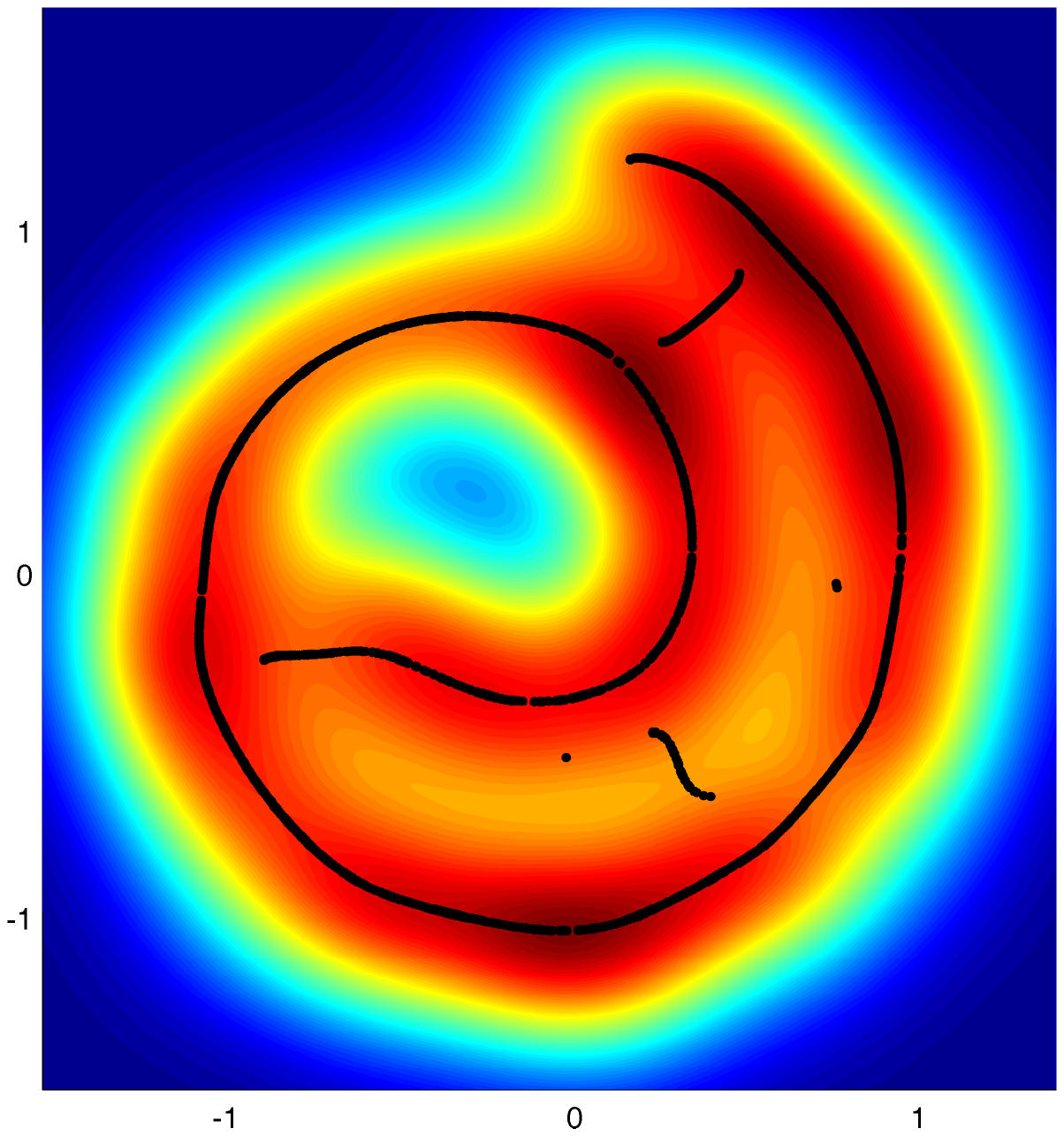}   \label{scmsh17}} 
\caption{There are more undesirable ridges for small bandwidths. Decreasing the bandwidth further can also result in the gaps in the desirable spiral filament. There are fewer undesirable ridges at higher bandwidth, but they have higher density. Increasing the bandwidth further introduces a significant bias of the main ridge, compared to the generating curve.}
\label{scms}
\end{figure} 
We apply SCMS using a Gaussian kernel density estimate (KDE) for two different bandwidth, 
which both give good results. The algorithm is initialized with
 2500 points on a 50x50 mesh in the range of the data. As shown in Figure \ref{scms}, the algorithm does output points approximating the underlying one-dimensional structure. 
 We note however that mathematically there is a number of ridges of KDE going between 
 the layers of the spiral. The SCMC algorithm captures those with high enough density and large enough "basin if attraction".  We note that as the kernel bandwidth increases, the number undesirable ridges decreases, but their intensity increases (the density at the remaining ridges is higher). Removing points on the mesh that have density below a given threshold has been suggested for noisy data \cite{chen15cosmic, genovese14}, and doing so can improve the results here by eliminating some of the undesirable ridges. However this introduces a parameter (density threshold) that needs to be chosen carefully (see appendix A of \cite{chen15cosmic}).
 %Nevertheless, the experiment illustrates that ridges alone do not always correspond to one-dimensional structure of the data, even when it is significantly smoothened out in the density estimation process. 
\end{example}

\begin{example} \emph{Noisy grid with background clutter.}
\label{grid}
In the following example we illustrate the robustness of the proposed approach to background noise. 
We use data in $\mathbb{R}^3$ with an underlying grid-like structure, shown in Figure \ref{ex_12}. The data consist of 2400 points generated by four intersecting lines with Gaussian noise, plus 2400 more points uniformly sampled from the background $[0,3]\times[0,3]\times[-.75,.75]$. 

Since the linear density of data in the background noise is less than that of the intersecting lines, the computed minimizer approximates the data in the background by isolated points (in green). For the parameters we used, this is predicted by the discussion of the density threshold in Section \ref{sec3}. By choosing $\lambda_1$ and $\lambda_2$ so that the critical density threshold $\alpha^*=  \left ( \frac{4}{3} \right )^{2}  \frac{\lambda_1}{\lambda_2^2} $ is between the linear density of the background noise and the linear density of the lines, the background noise will be represented by isolated points, which allows the curves to appropriately approximate the intersecting lines. 

We note that although the algorithm succeeds in approximating the one-dimensional structure of the data, it is not able to recover the intersections due to the simpler structure of configurations we consider in \eqref{mppc}. 
In such cases where our approach cannot identify the global topology, we presume it may be possible to use the obtained approximation as input for other approaches that aim to recover the topology of the data \cite{SiMeCa07}. 

\end{example} 
\begin{figure}[!htbp]
%\vskip 0.2in
\vspace{-14pt}
\begin{center}
\centerline{\includegraphics[width=0.84\textwidth]{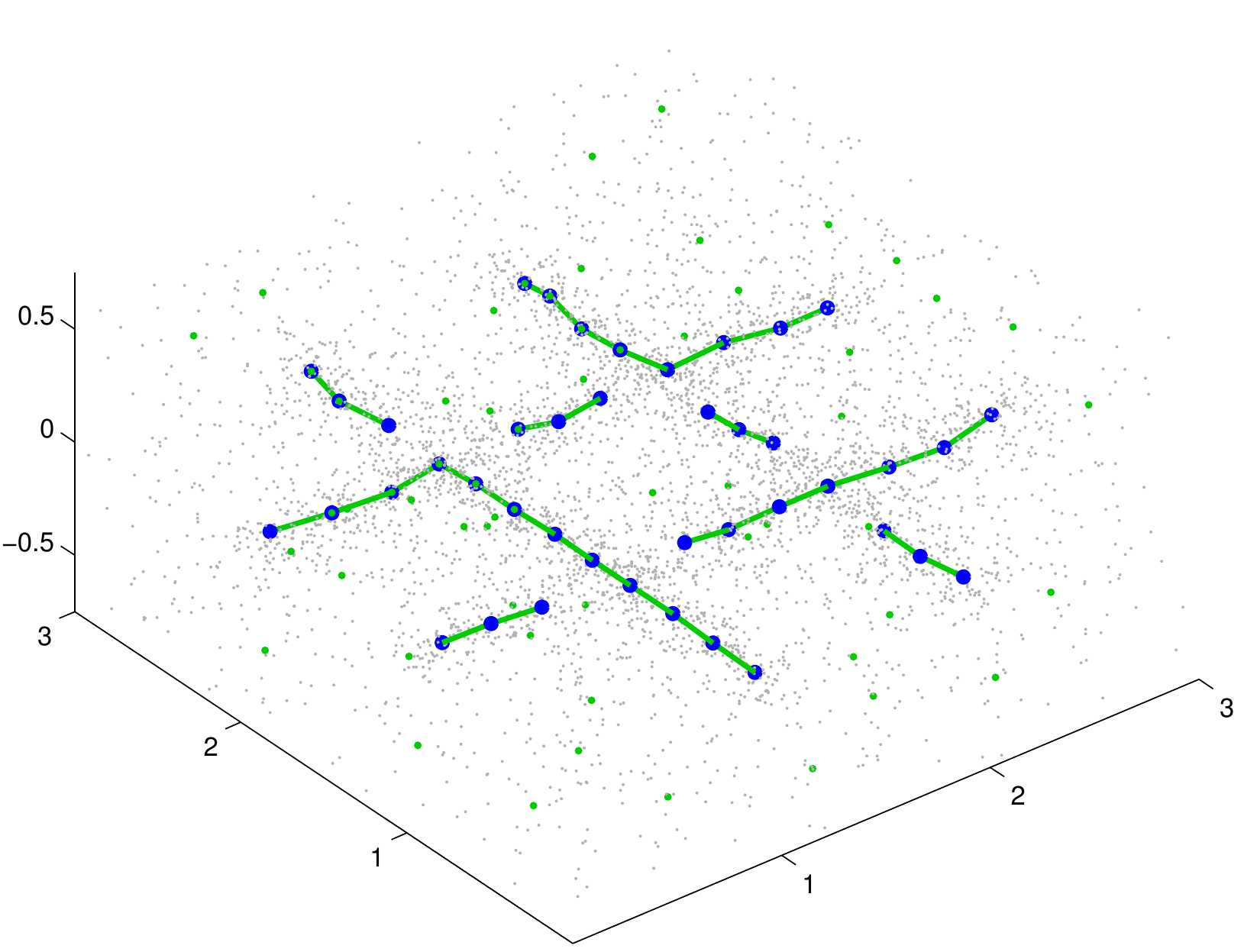}}
\caption{Computed minimizer on data generated from four intersecting lines forming a grid with Gaussian noise and background clutter in $\RR^3$, with $\lambda_1 = 7\times10^{-4}, \, \lambda_2 = 0.2$.
The minimizer consists of curves and isolated points (green). The larger blue dots represent the discretization (points $y_i$) of curves which are a part of the minimizer.  }
\label{ex_12}
\end{center}
%\vskip -0.2in
\label{fig:window}
\end{figure} 

\begin{example} \label{zebrafish} \emph{Zebrafish embryo images.} 
Here we demonstrate performance of the algorithm on a high-dimensional dataset that consists of grayscale images. 

In \cite{dsilva15} Dsilva et al. develop a technique for finding the temporal order of still images of a developmental process. They consider the problem where both the time ordering and the angular orientation of the images are unknown. 
To be able to handle both variables simultaneously they use vector diffusion maps \cite{SingerWu}.  One of the tests they performed to validate their approach was on images taken from a time-lapse movie that captures zebrafish embryogenesis [\url{https://zfin.org/zf_info/movies/Zebrafish.mov}] (Karlstrom and Kane \cite{k96}). 

In this case the angle of rotation is fixed, recovering the temporal order can be done using diffusion maps \cite{CoiLaf06} alone, see Figure \ref{zf_color}. Here we demonstrate that these images can also be ordered using our method.

As in \cite{dsilva15}, we apply our algorithm to 120 consecutive frames (roughly corresponding to seconds 6-17 in the movie) of 100x100 pixels in order to test how well it can recover the development trajectory. Thus each image is represented as a point in $\mathbb{R}^{10,000}$. We note that there is almost no noise in the dataset, but emphasize that the goal here is to recover a single curve passing through data whose true order is not provided to the algorithm.

After normalizing the data, we run our algorithm with parameters $\lambda_1 = 10^{-3}$ and $\lambda_2 = 2$. The low value for $\lambda_1$ is appropriate given that there is virtually no noise. The high value of $\lambda_2$ ensures that a single curve is found, and so the functional \eqref{ppc} is also being minimized.
 Our algorithm outputs a curve that correctly ranks all of the original images. Figure \ref{zf_images} shows a random sample of the images used, along with their found true ordering. In Figure \ref{zf_curve}, we visualize the found curve in $\mathbb{R}^3$ using the first three principal components. 
\begin{figure}
\begin{center}
\begin{tabular}{@{}cccccccccc@{}}
\includegraphics[width = .08\textwidth]{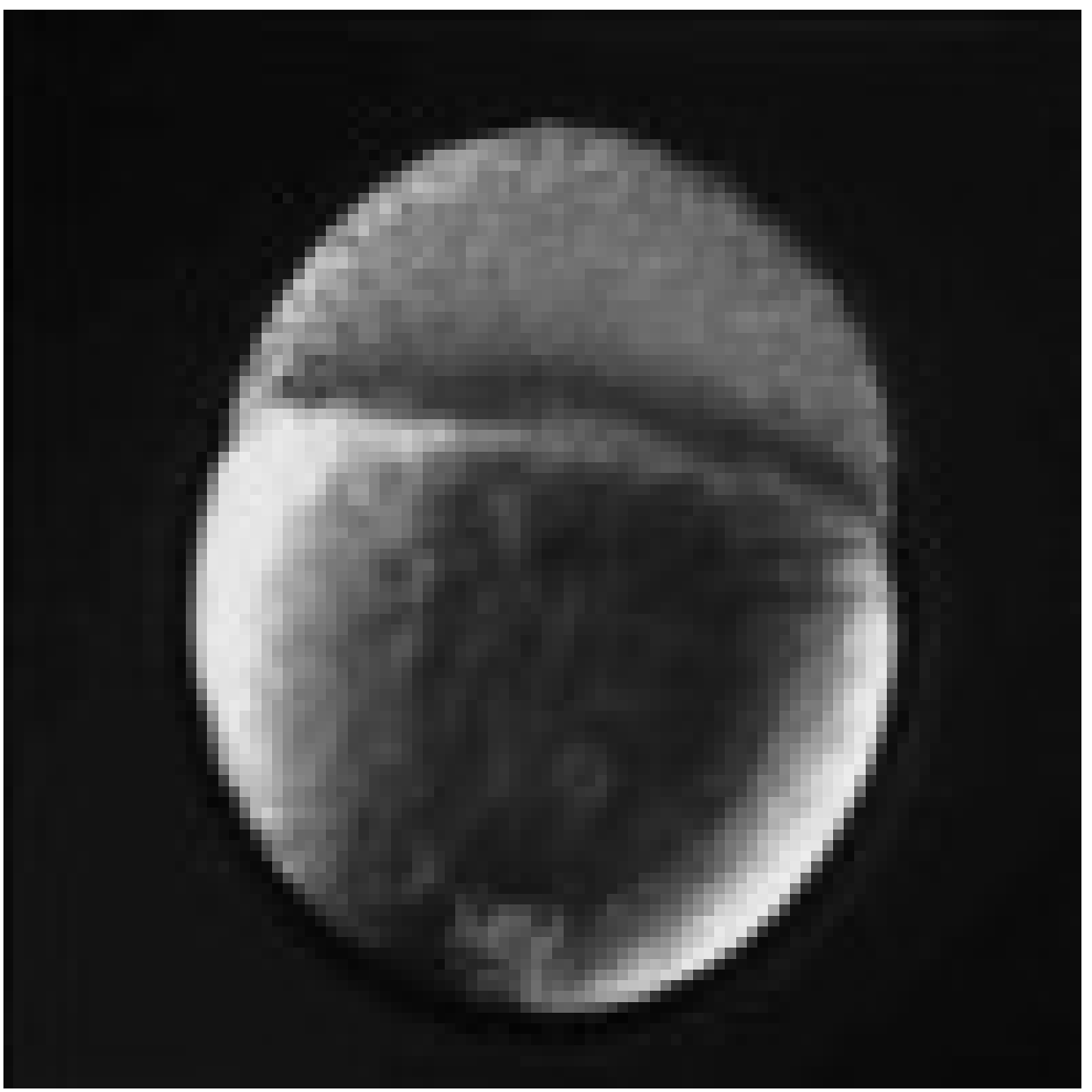} &
\includegraphics[width = .08\textwidth]{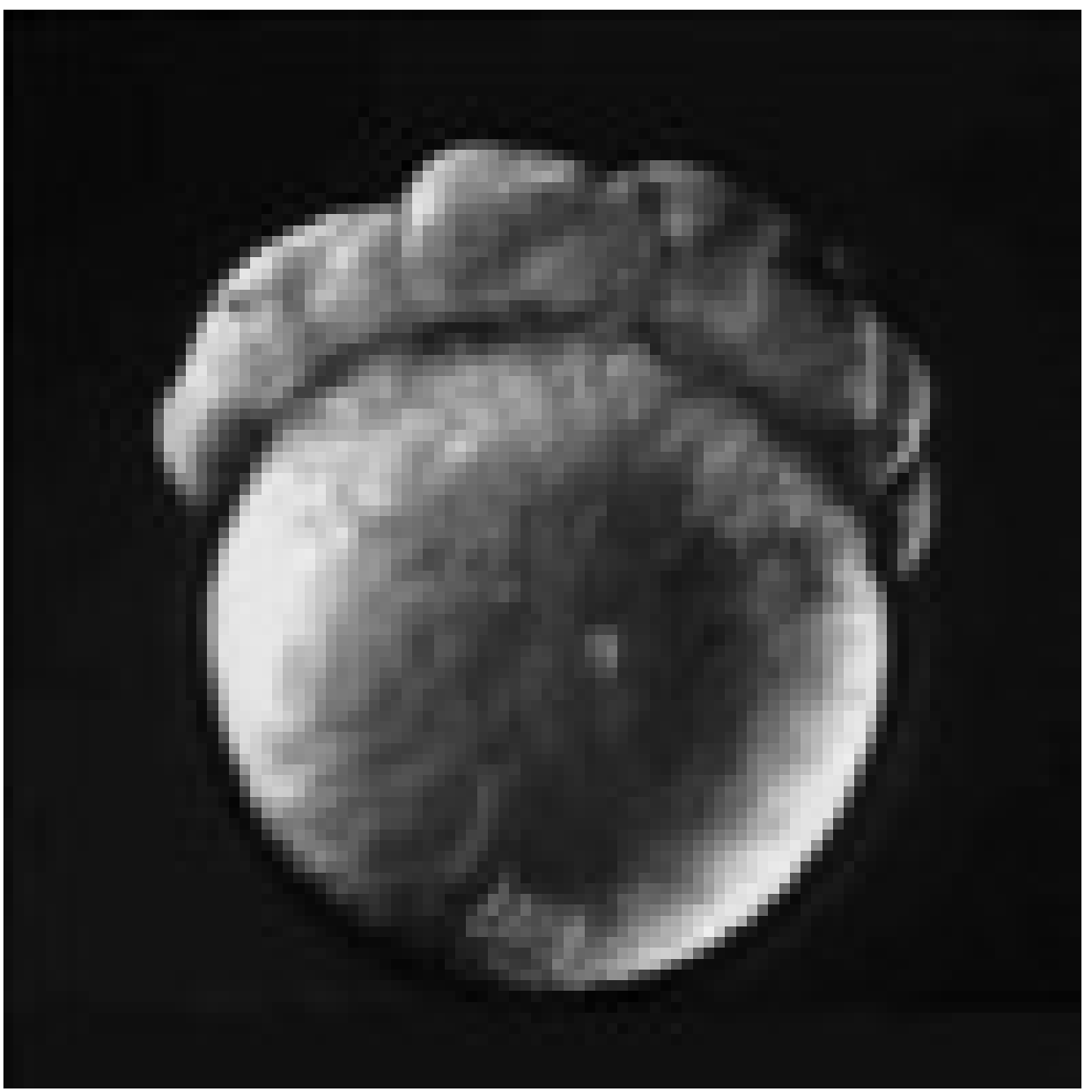} & 
\includegraphics[width = .08\textwidth]{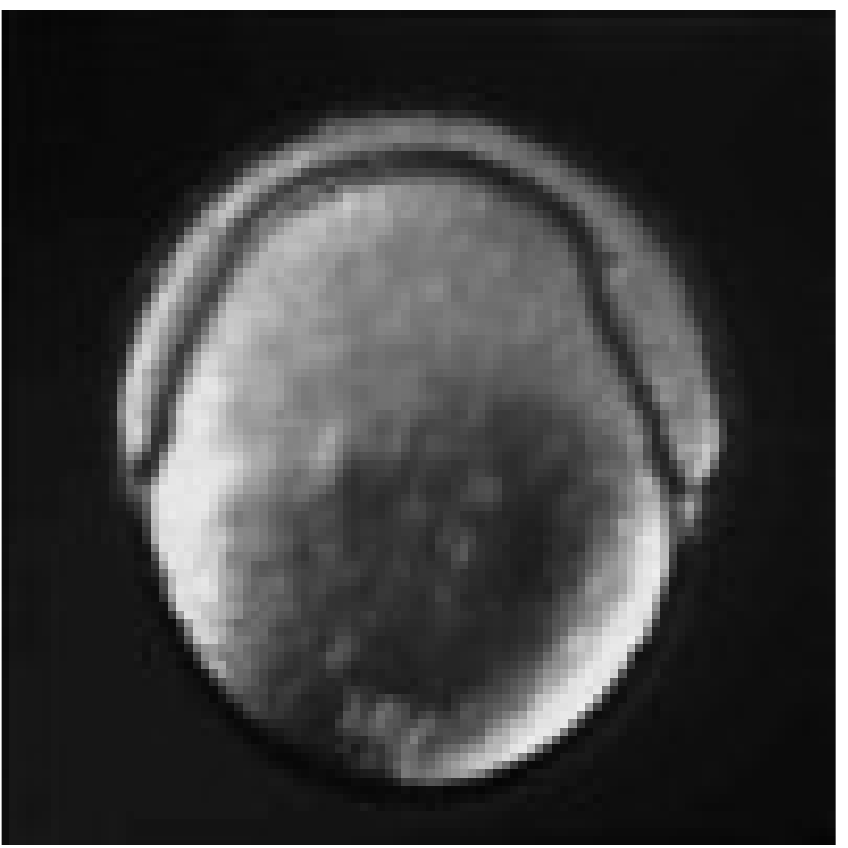} &
\includegraphics[width = .08\textwidth]{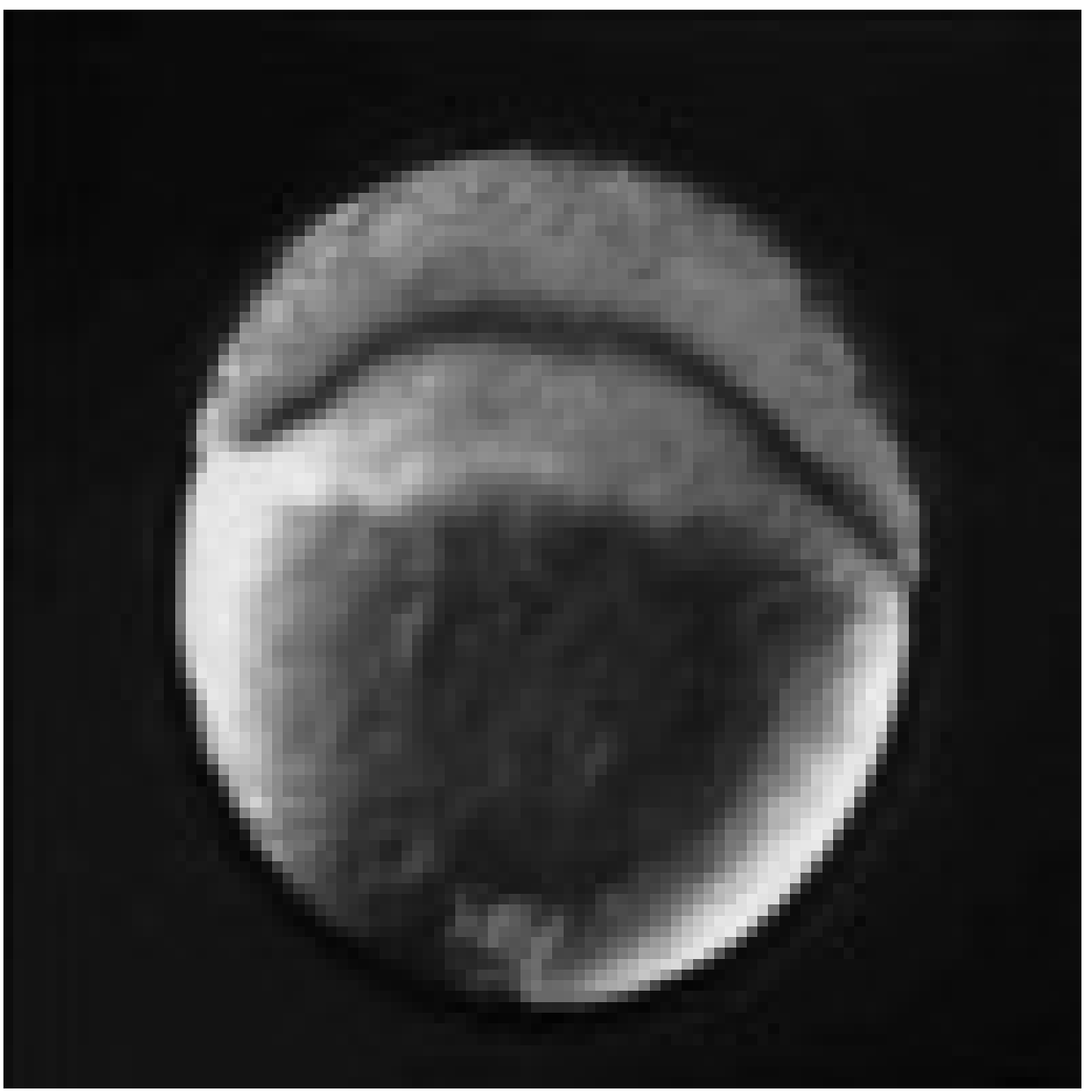} & 
\includegraphics[width = .08\textwidth]{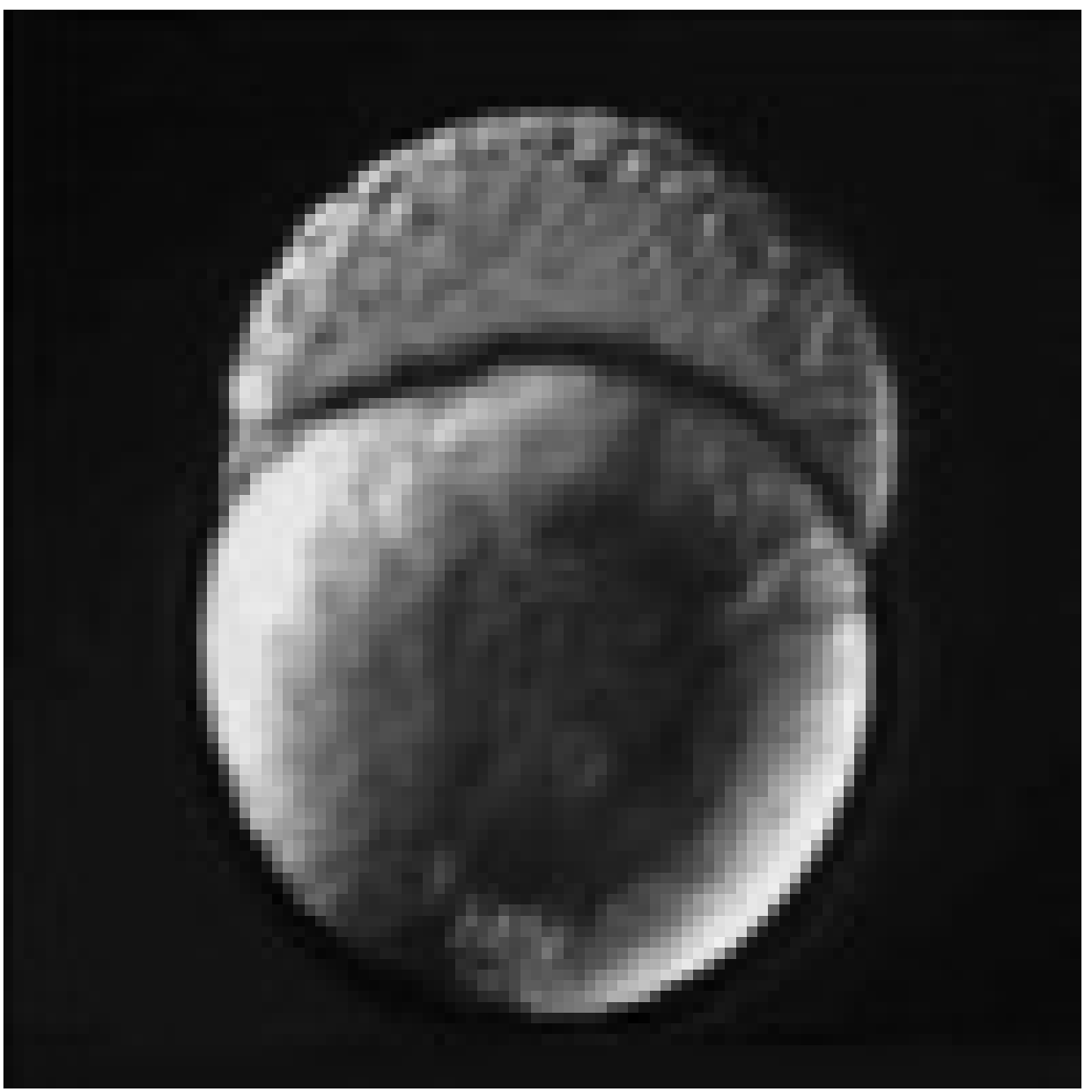} &
\includegraphics[width = .08\textwidth]{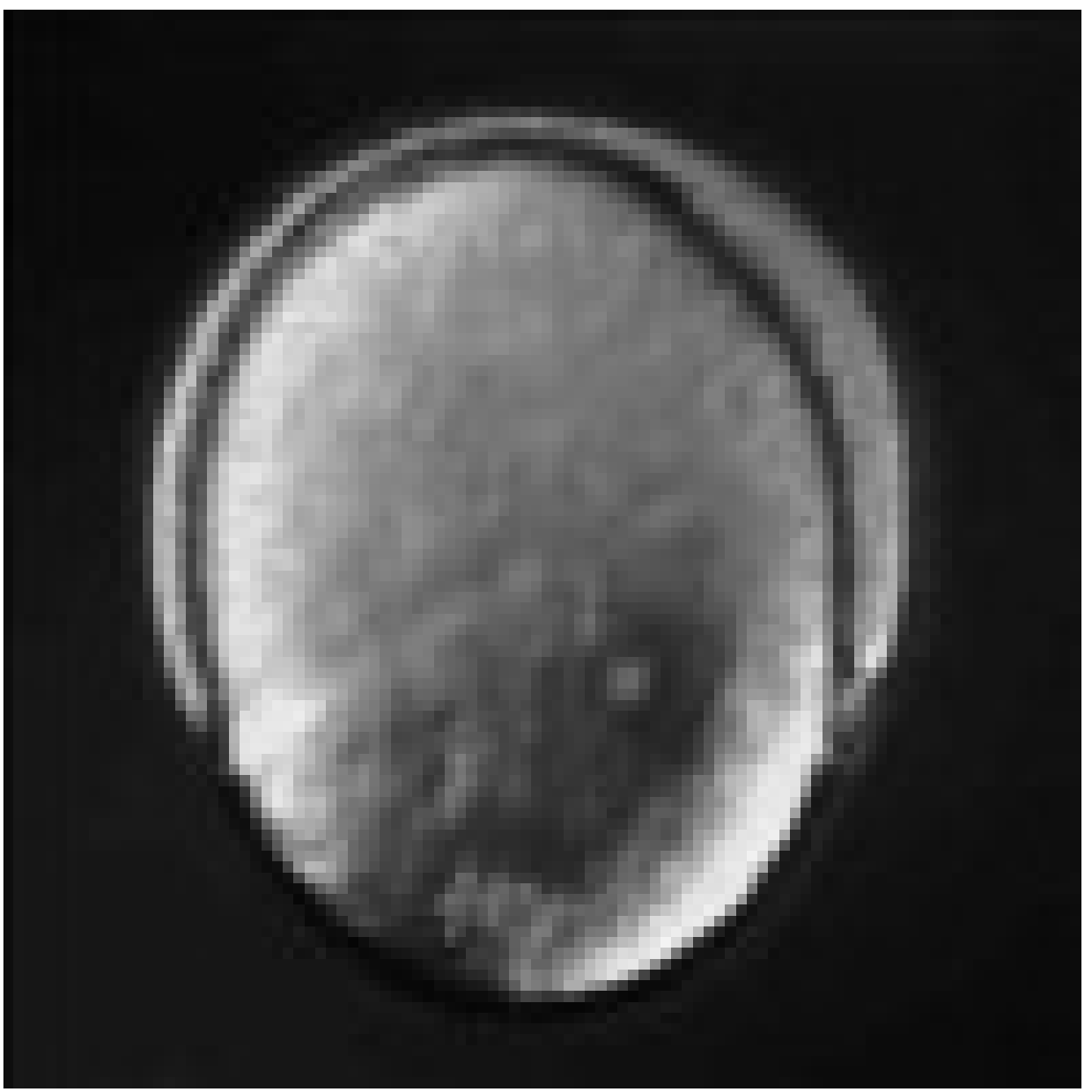} &
\includegraphics[width = .08\textwidth]{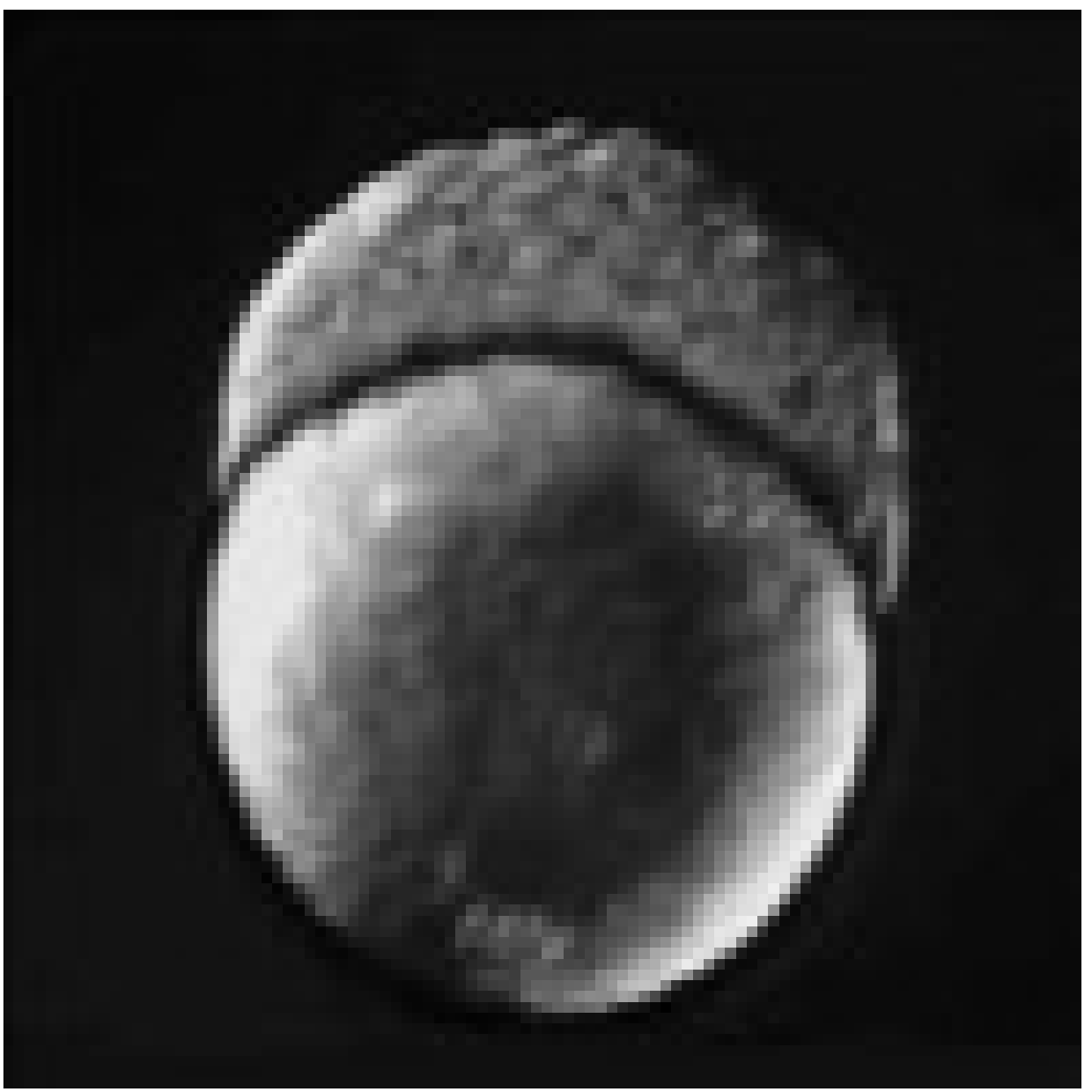} &
\includegraphics[width = .08\textwidth]{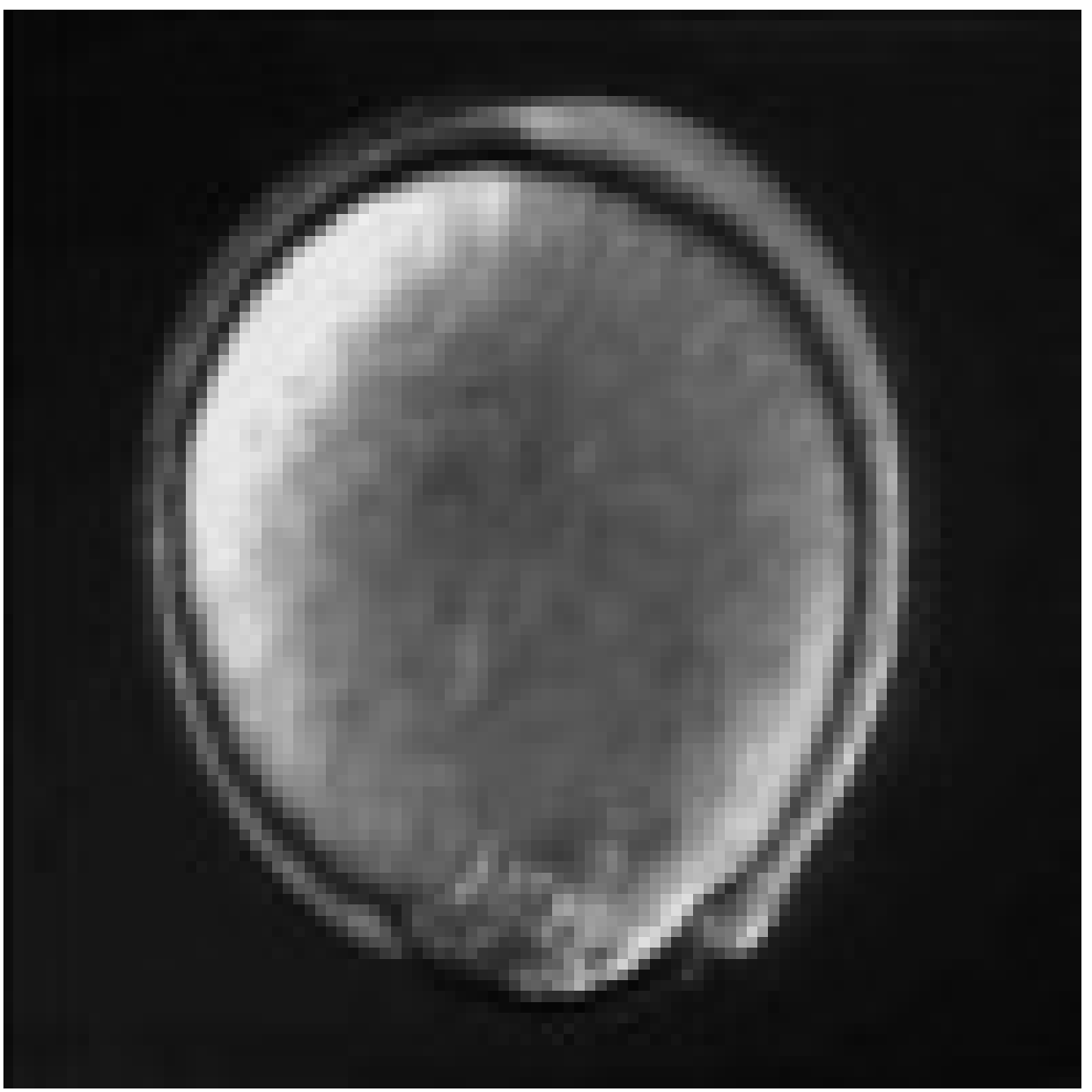} &
\includegraphics[width = .08\textwidth]{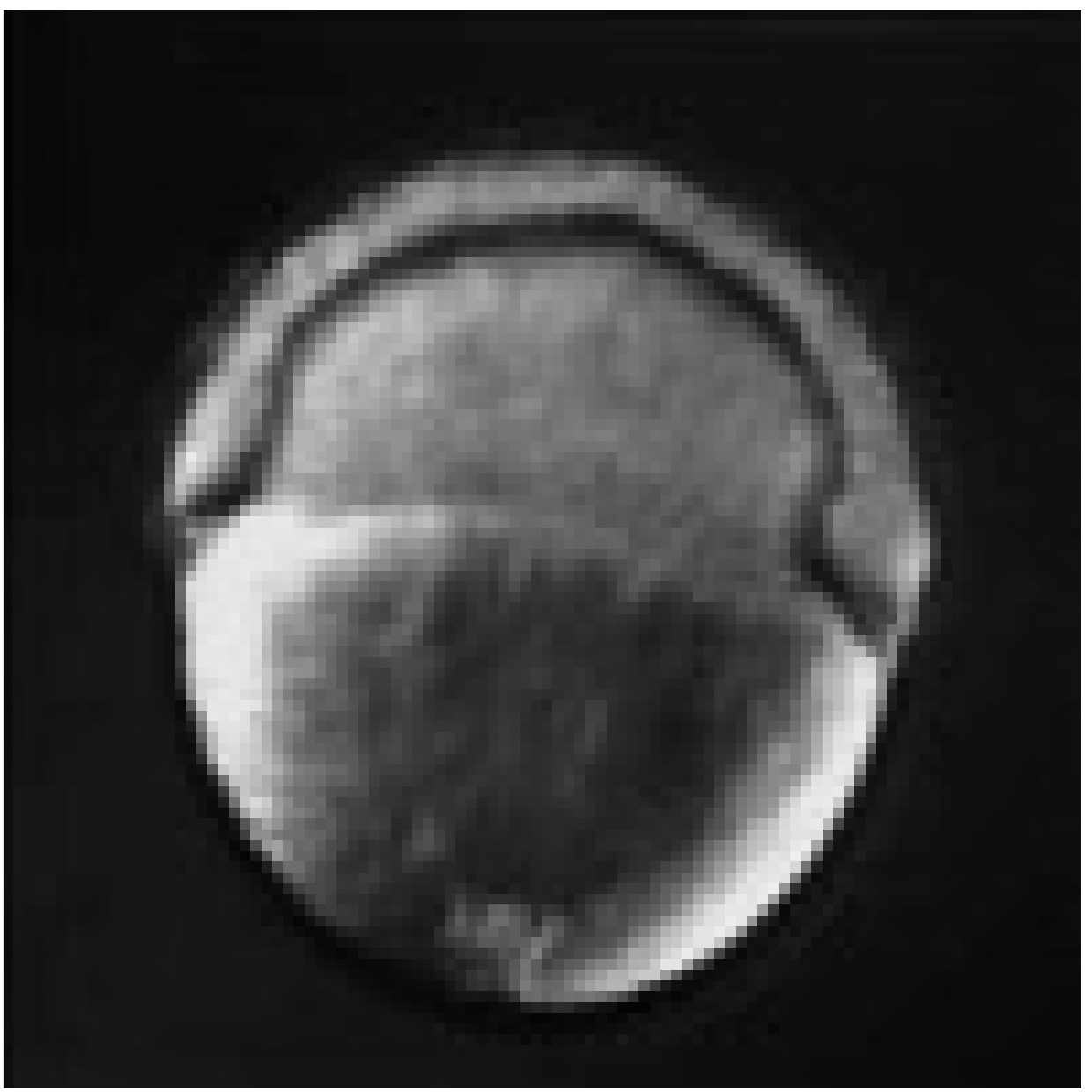} &
\includegraphics[width = .08\textwidth]{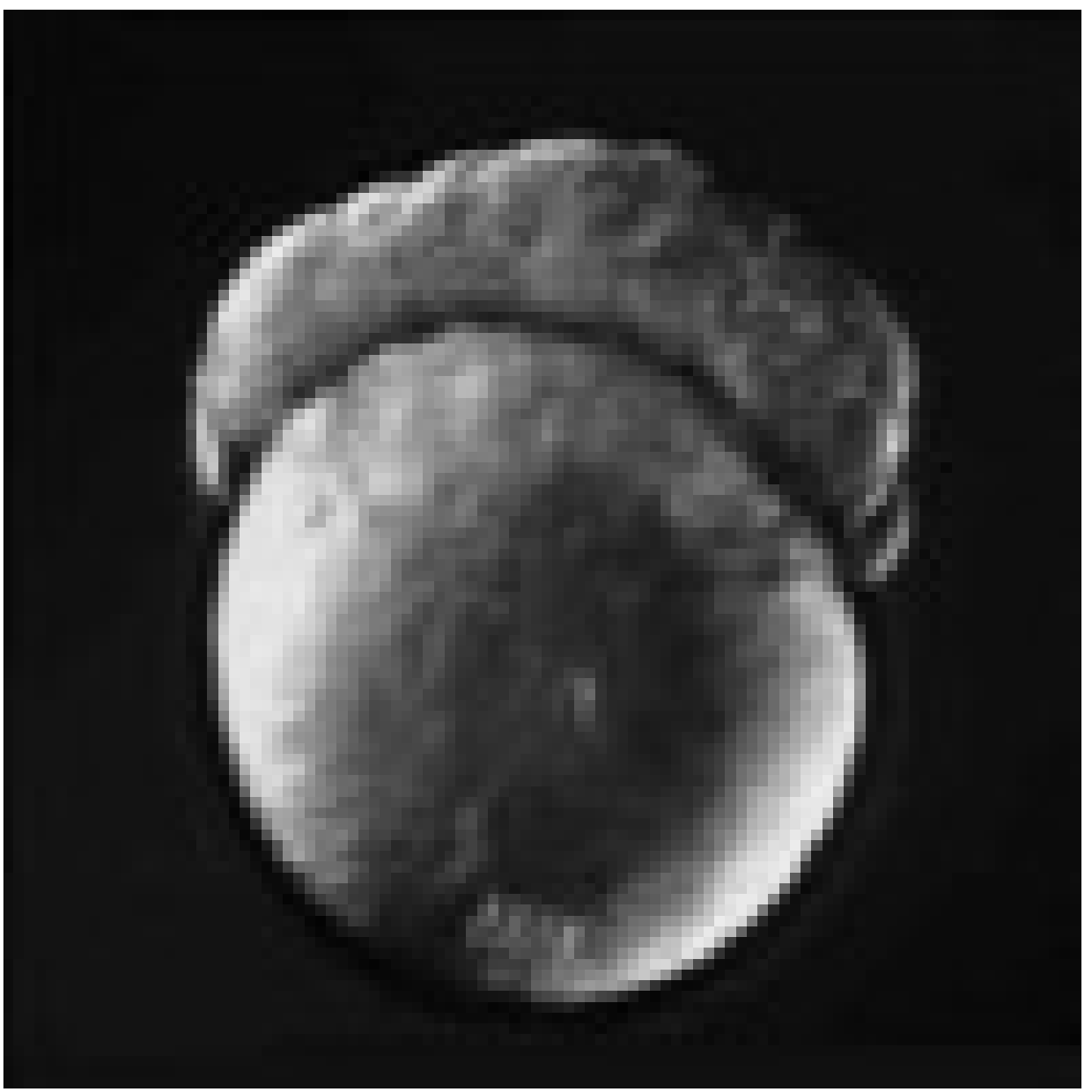} \\
\includegraphics[width = .08\textwidth]{zf_1-eps-converted-to.pdf} &
\includegraphics[width = .08\textwidth]{zf_2-eps-converted-to.pdf} & 
\includegraphics[width = .08\textwidth]{zf_3-eps-converted-to.pdf} & 
\includegraphics[width = .08\textwidth]{zf_4-eps-converted-to.pdf} &
\includegraphics[width = .08\textwidth]{zf_6-eps-converted-to.pdf} &
\includegraphics[width = .08\textwidth]{zf_7-eps-converted-to.pdf} &
\includegraphics[width = .08\textwidth]{zf_8-eps-converted-to.pdf} &
\includegraphics[width = .08\textwidth]{zf_101-eps-converted-to.pdf} &
\includegraphics[width = .08\textwidth]{zf_9-eps-converted-to.pdf} &
\includegraphics[width = .08\textwidth]{zf_10-eps-converted-to.pdf} 
\end{tabular}
\end{center}
\caption{The top row shows 10 images of zebrafish embryos in random order. The bottom row shows 10 images ordered by the found curve that minimizes \eqref{mppc}. 
% The images in the bottom row correspond to points 1, 10 ,20, 30, 40, 50, 60, 70, 80, and 90.
 }
\label{zf_images}
\end{figure}

%We note the problem in this example can essentially be reduced to finding the correct ordering of the data, and since there is no noise, a curve can then be obtained by simply connecting the data points in the correct order. 
%The problem of recovering the order of the data can be approached through nonlinear dimensionality reduction methods that provide low-dimensional embedding coordinates of the data. Indeed, a variant of the diffusion maps algorithm has been successfully applied to register and temporally order images, including the images in this example \cite{dsilva15}. Used there, vector diffusion maps allow for the added function of being able to factor out any existing rotations of the images. To illustrate the applicability of such approaches, we show the 1-dimensional embedding (ordering) of the data given by standard diffusion maps \cite{CoiLaf06} in our simpler case of already registered images. The found ordering is shown in Figure \ref{zf_color}, and, as the ordering induced by the curve found by our algorithm, corresponds to the true ordering of the images.

\begin{figure}[!htbp]
\centering
 \subfigure[ The curve found minimizing \eqref{mppc} ]{\includegraphics[width=0.44 \textwidth]{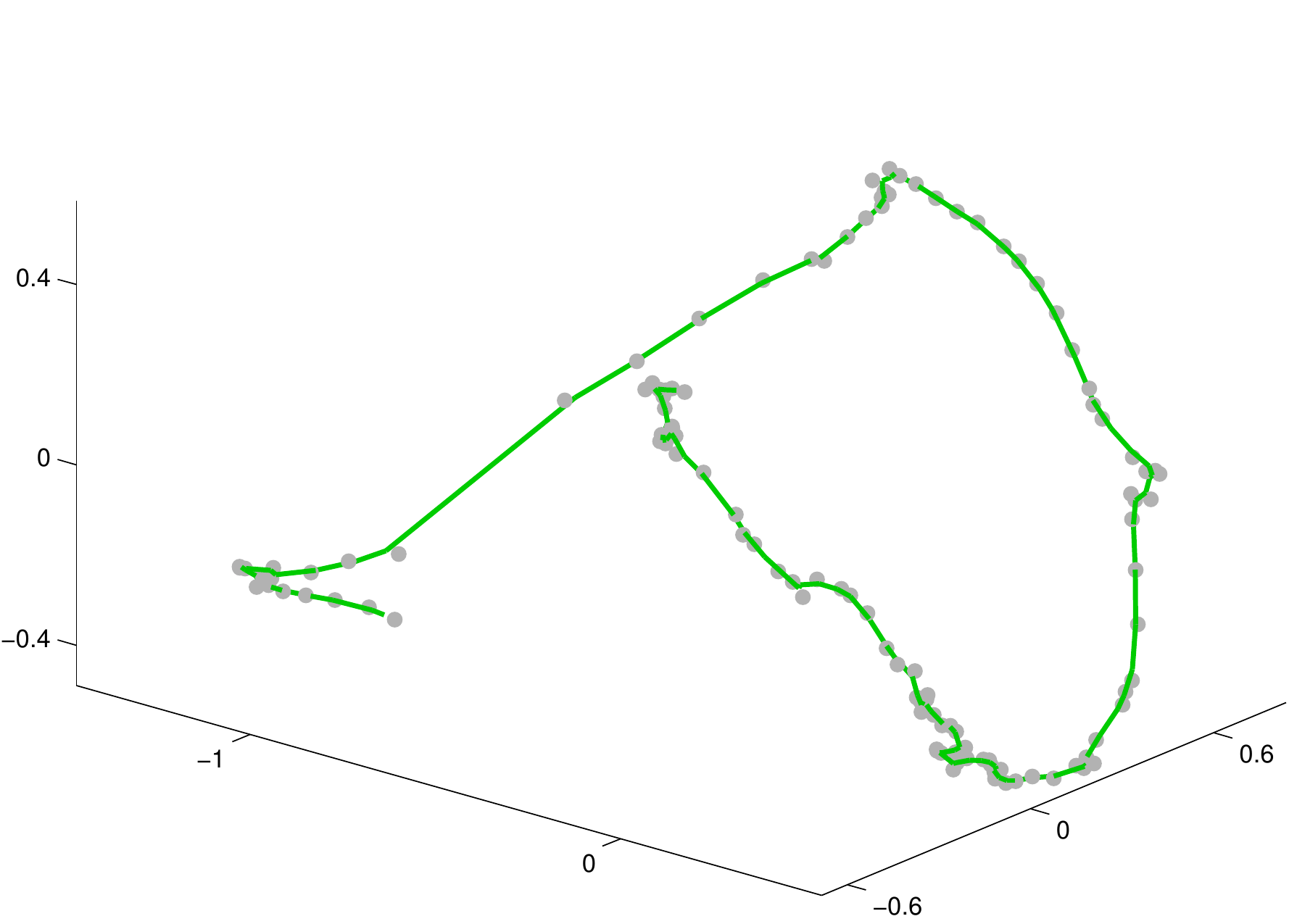} \label{zf_curve} }  
  \hspace*{16pt} \nolinebreak
 \subfigure[Color-coded first embedding coordinate of the diffusion map]{\includegraphics[width=0.44 \textwidth]{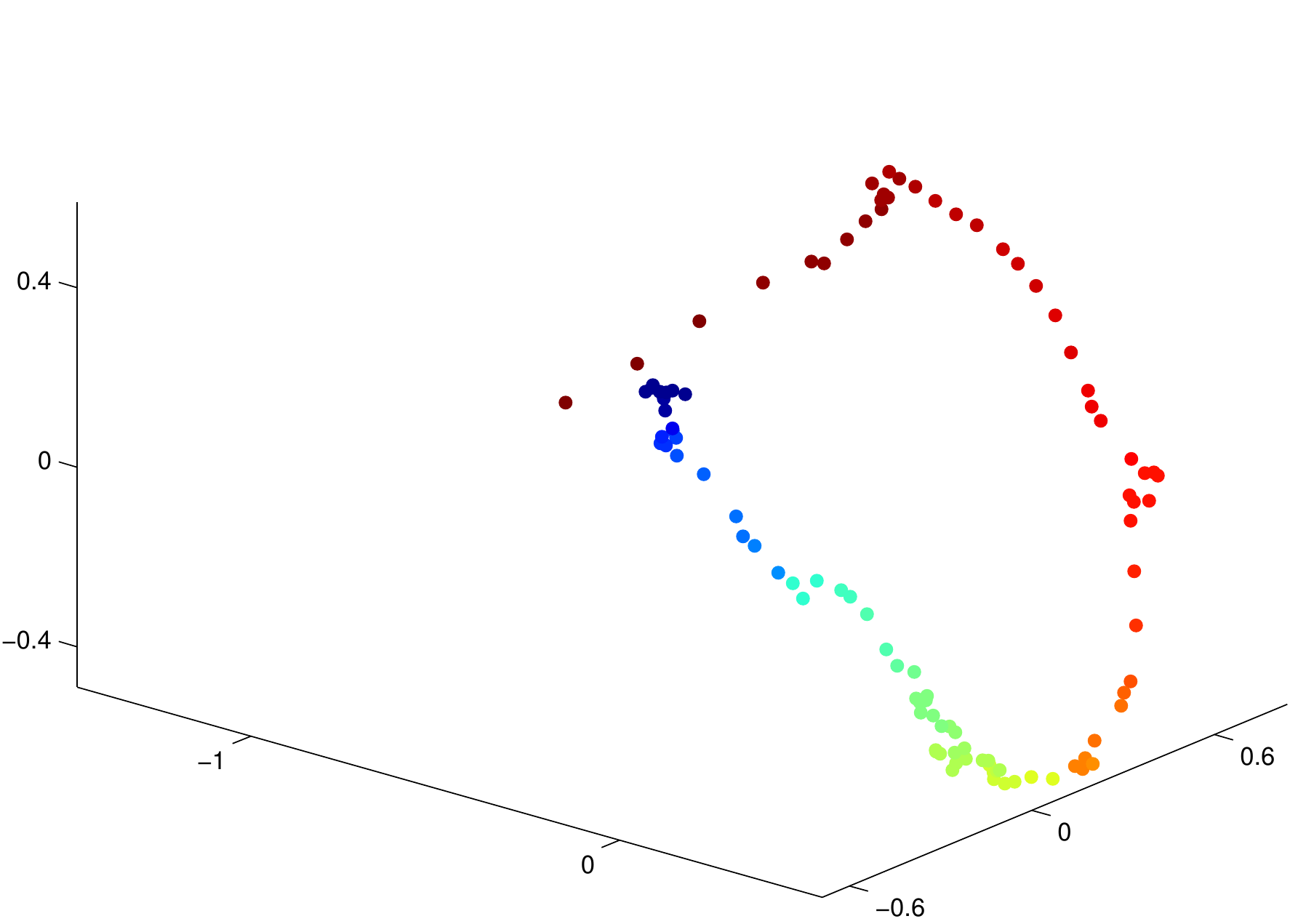}   \label{zf_color}} 
\caption{On both images the first three principal components are used for visualization.
The \eqref{mppc} algorithm was applied to all 120 images, while we applied diffusion maps to only the first 104 images due to a slight camera shift that resulted in relatively large euclidean distance between images 104 and 105. Both methods perfectly ranked their respective data, and some (simple) preprocessing  done in \cite{dsilva15} allows diffusion maps to work on the full 120 images.}
\end{figure}
\end{example}

\begin{example} \emph{Noisy spiral revisited.}
In the previous example we discussed the feasibility of using nonlinear dimensionality reduction techniques such as diffusion maps to order the data. Since the data in Example \ref{zebrafish} had almost no noise, one can obtain a good ordering using many different methods.
%f there is noise and an ordering not far from the truth is found, one can still obtain a curve approximating the data with standard regression. In essence, one can view the challenging part of our curve approximation problem as the problem of finding the intrinsic ordering of the data, as was formulated by Gerber and Whitaker in \cite{gw13}. 
Spectral dimensionality reduction techniques are often successful even when substantial noise is present. However, when there is significant overlap in the distribution of data whose generating points have large intrinsic distance, spectral methods can fail to recover the desired one-dimensional ordering. 
%However when noise is relatively large and the one dimensional structures present in data are long and curved spectral methods can fail to recover the desired one dimensional ordering. 
The example below illustrates this and indicates that in some situations minimizing \ref{mppc} gives better results in ordering the data than diffusion maps. 

We revisit the noisy spiral data considered in Example \ref{spiral}, and run the diffusion maps algorithm using a range of scaling parameters $\epsilon = (\frac{d}{c})^2$, where $d$ denotes the median of the pairwise distances of the data points. 
After testing a wide range of parameter values $c$, we found that for all values tested the spectral embedding fails to recover the desired one-dimensional ordering.
We display the typical results (which correspond to $c=0.5, 2,$ and $4$) in Figure \ref{sp_dm}.
Larger values of $\epsilon$ lead to an embedding that differentiates the data linearly from bottom left to top right, while smaller values lead to an embedding that separates outliers in the top from the rest of the data points. On the other hand minimizer of \eqref{mppc} can correctly recover the one dimensional structure, as shown in Figure \ref{ex_6b}. 
%
%Spectral dimensionality reduction methods behave globally in that they aim to preserve the sum of locally weighted pairwise distances. The example illustrates a scenario in which the interface between data points that are not visibly sufficiently close to each other is long enough that it becomes more advantageous to map them close to each other, than to differentiate them along the length of the spiral. In contrast, our approach is not affected by length of this interface, and is instead guided by the local density of points. 

\begin{figure}
\centering
 \subfigure[$\epsilon = 4d^2$]{\includegraphics[width=0.31 \textwidth]{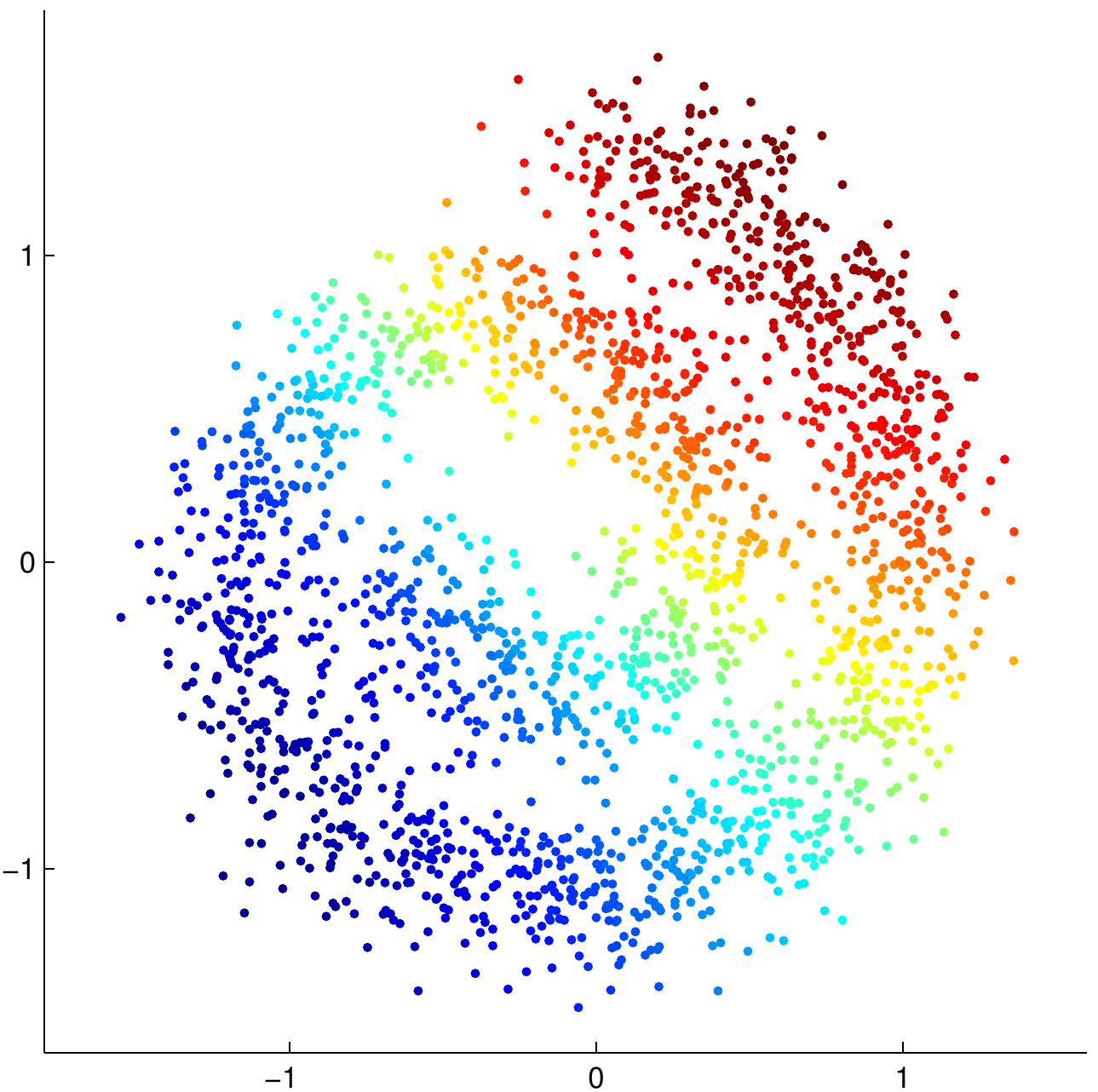} \label{sp1} }  
  \hspace*{3pt} \nolinebreak
 \subfigure[$\epsilon = \frac{d^2}{4}$]{\includegraphics[width=0.31 \textwidth]{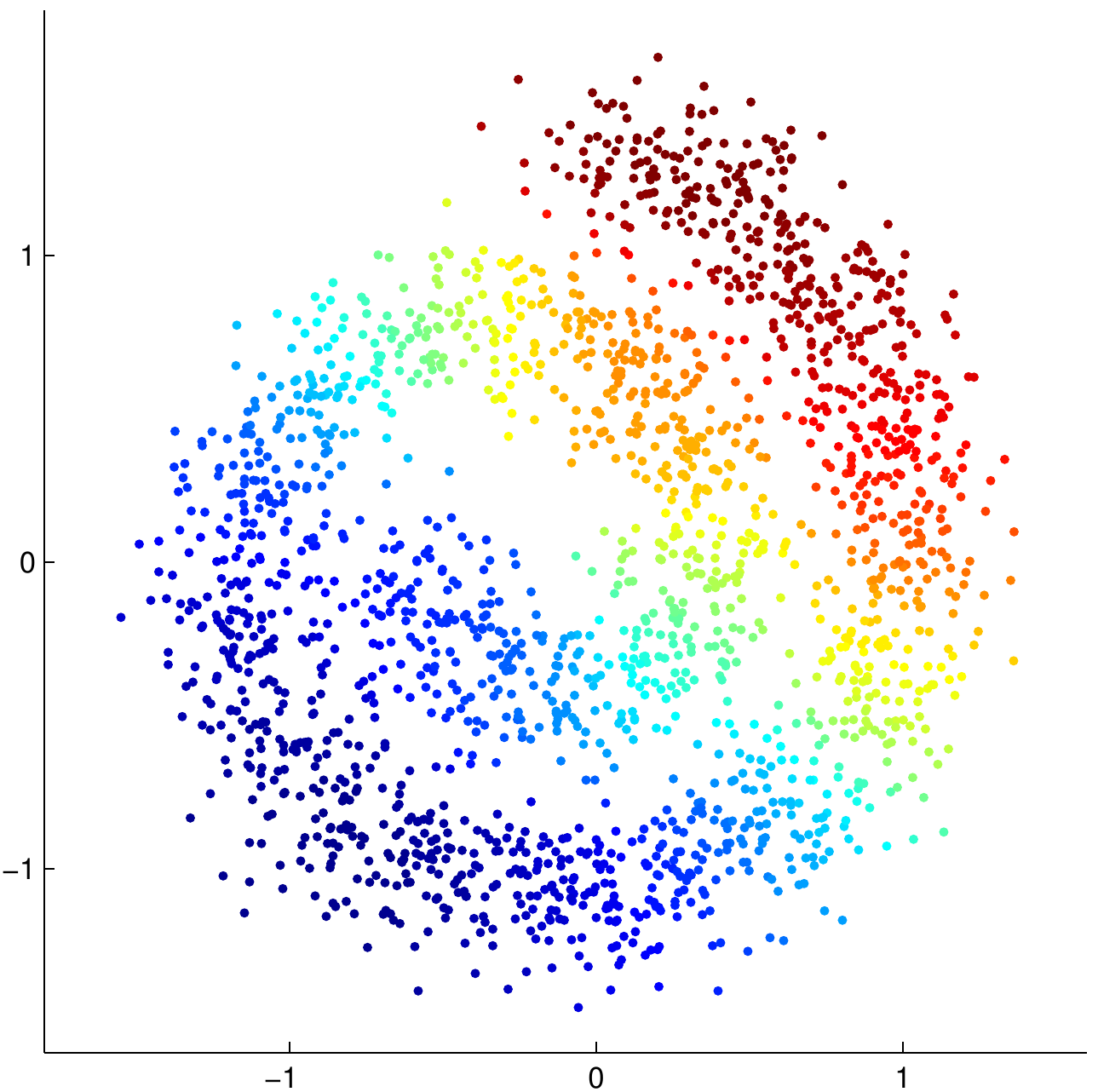}   \label{sp2}} 
   \hspace*{3pt} \nolinebreak
 \subfigure[$\epsilon = \frac{d^2}{16}$]{\includegraphics[width=0.31 \textwidth]{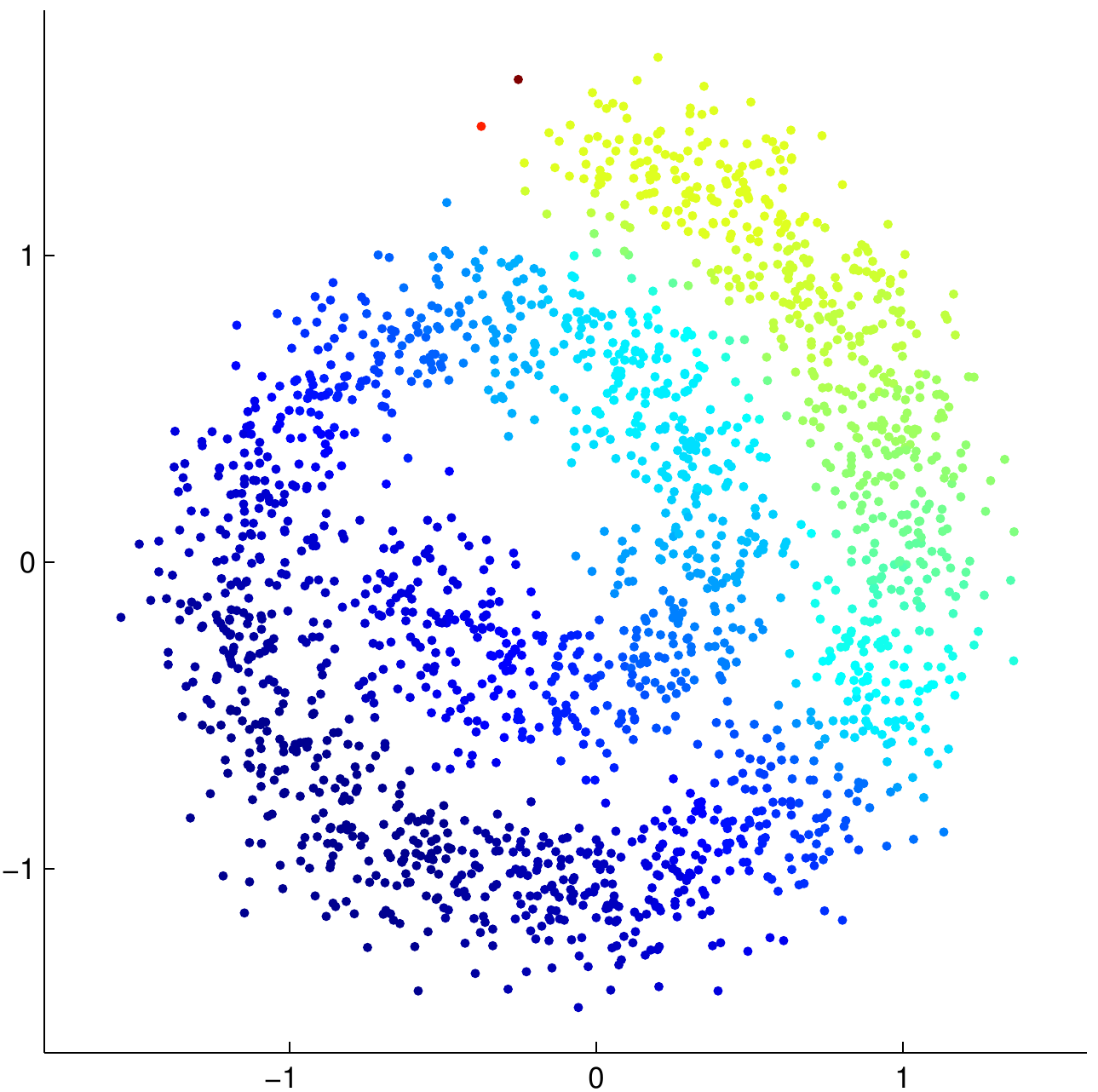}   \label{sp3}} 
\caption{Color-coded one-dimensional embeddings provided by the diffusion maps algorithm for three settings of the scaling parameter $\epsilon$. No values for $\epsilon$ recover the intrinsic ordering of the data. Larger values (left and middle) cannot detect the finer structure, while a smaller value (right) separates two outliers at the top (colored red and brown) from the rest of the data.}
\label{sp_dm}
\end{figure}

\end{example}
\nc

%%%%%%%%%%%%%%%%%%%%%%%%%%%%%%%%%%%%%%%%%%%%%%%%%%
\section{Discussion and Conclusions} \label{sec:dc}

\nc 
In this paper, we proposed a new objective functional \eqref{mppc} for finding one-dimensional structures in data that allows for representation consisting of several components. The functional introduced is based on the average-distance functional and can be seen as a regularization of principal curves. It penalizes the approximation error, total length of the curves, and the number of curves used. We have investigated the relationship between the data generated by one dimensional signal with noise, the parameters of the functional, and the minimizer. Our findings provide guidance for the choice of parameters, and can further be used for multi-scale representation of the data. In addition, we have demonstrated that the zeroth-order term helps energy descent based algorithms converge to desirable configurations. 
In particular, energy descent approaches for \eqref{ppc} very often end up in undesirable local minima. The main reason for this is of topological nature -- points on the approximate local minimizer represent the data points in an order which may be very different from the true ordering corresponding to the (unknown) generating curve. The added flexibility of being able to split and reconnect the curves provides a way for resolving such topological obstacles. 

Finally, we have developed a fast numerical algorithm for estimating minimizers of \eqref{mppc}. It has computational complexity $\mathcal{O}(mnd)$, where $n$ is the number of data points in $\R^d$, and $m$ is the number of points along the approximating curve(s). We demonstrated the effectiveness of the algorithm in recovering the underlying one-dimensional structure for real and synthetic data, in cases with significant noise and in very high dimensions.  

%  We give insight as to how solutions depend on the regularization parameters, and we provide a computationally efficient and robust numerical scheme for finding solutions, that scales linearly with both the number of data and the dimension. We demonstrate performance on several datasets and observe that results correspond to desirable zero and one-dimensional summaries. Our approach originates from the average-distance problem, which has been previously studied in the mathematical community. 

%As the proposed algorithm does not come with any guarantees for conditions under which a global minimizer will be recovered, this is an important direction for future research. In cases where only one curve is sought, a path-solve method for $\lambda_2$ might be effective. This would entail solving the problem for a sequence of increasing $\lambda_2$ values, using each result as initialization for the next step. The smallest $\lambda_2$ values would correspond to a configuration of singletons, and the final largest value would be the smallest value needed to obtain a single curve. 

\subsection{Relation to other approaches.}

We now briefly compare the proposed approach to other existing approaches for finding one-dimensional structures in data. 
The original principal curves are prone to overfitting, as carefully explained in \cite{gw13} and are difficult to compute numerically. The approach of Gerber and Whitaker \cite{gw13} offers a more stable notion and an effective numerical implementation in low dimensions. 
Experiments by the authors indicate that the algorithm often selects desirable minimizers.
However, the functional may still overfit noisy data, as there are many curves which minimize the functional but do not represent the data well.
%We should mention that the experiments by the authors indicate that the algorithm often selects desirable minimizers. 
We also note that since the functional does not measure the approximation error, its low values are not a measure of how well the data are approximated.  On the other hand if the data are sampled from a smooth curve the minimizers can be expected to recover the curve exactly.

A number of works inspired by principal curves treat the problem by considering objective functionals which regularize the principal curves problem. Among them are works of 
Tibshirani \cite{Tib92} (square curvature penalization)
Kegl, Krzyzak, Linder, and Zeger \cite{kegl} (length constraint), Biau and Fischer \cite{BiaFis12} (length constraint)
Smola, Mika, Sch\"olkopf, and  Williamson \cite{sms01} (a variety of penalizations including penalizing length as in \eqref{ppc}). These works are similar in spirit to our approach. The work of 
Biau and Fischer \cite{BiaFis12} 
%\footnote{This paper has many references to applications of principal components. Also their seismic data seems like a nice real data set to try. \nc} 
also discusses model-selection based automated ways to choose parameters of the given functional for the specific data set.
 Wang and Lee \cite{LeeWan06ap} also use model selection to select parameters, but ensure the regularity of the minimizer in a different way. Namely they model the points along the curve as an autoregressive series.

Regarding numerical approaches,  Kegl, Krzyzak, Linder, and Zeger \cite{kegl} proposed a polygonal-line algorithm that penalizes sharp angles. Feuersänger and Griebel employ sparse grids to minimize a functional similar to \eqref{ppc}, with length squared regularization \cite{FeuGri09} (as in  \cite{sms01}) for manifolds up to dimension three. While these approaches take measures against overfitting data, they do not address the problem of local minima,
%allow for splitting and reconnecting, 
resulting in performance that is very sensitive to the initialization of the algorithms. Verbeek, Vlassis and Kr\"ose \cite{VerVlaKro02} approach this issue by iteratively inserting, fitting, and connecting line segments in the data. This approach is effective in some situations where others exhibit poor performance (e.g. spiral in 2-d, some self-intersecting curves, and curvy data with little noise). However, in cases of higher noise the algorithm overfits if the number of segments is not significantly limited. A better understanding of the impact of the number of segments on the final configuration is still needed, despite some efforts to automatize selection of this parameter \cite{LeeWan06ap}.  

% Our approach differs in that  ADMM based optimization is in general significantly faster for these functionals and that the flexibility of multiple components often prevents the algorithm for being attracted to an undesirable local minimum.

%What we provide is a new functional (multicomponent), but also theoretical results on properties of minimizers,  precise understanding of parameters, and an efficient numerical scheme.

A different class of approaches to finding one dimensional structures is based on estimating the probability density function of the point cloud and then finding its ridges \cite{Eberly}. In particular  Ozertem and Erdogmus \cite{oe11} introduced the widely used Subspace Constrained Mean Shift (SCMS) algorithm, which a based on the Mean Shift algorithm of Comaniciu and Meer \cite{ComMee02}. 
Estimation of density ridges has been substantially developed and studied -- see works of 
Chen,  Genovese, and Wasserman \cite{CGW15},
   Genovese, Perone-Pacifico, Verdinelli, and Wasserman  \cite{genovese14}, and Pulkkinen \cite{p15}. The SCMS algorithm is often able to find the desired one-dimensional structure even when significant noise is present. However it does not automatically parameterize the found one dimensional structure, which consists of an (unordered) set of points.
An important difference between our approach and SCMC is that we seek the one-dimensional structure that best approximates the data, and measure the quality of approximation as part of the \eqref{mppc} finctional, while SCMC does not require the ridges found to approximate that data well. Let us also mention that in high dimensions SCMC  faces a combination of computational difficulties, the primary of which is accurately estimating the Hessian of the density function (found using a kernel density estimator), as is discussed in Section 3 of \cite{oe11}.

%
% This is a computationally expensive task and \sout{ with best, according to our knowledge, reported complexity $\mathcal{O}(n^2 d^2)$ \cite{p15},} can become intractable for high-dimensional data with many points. In particular, in all of these approaches one needs to calculate the Hessian of a kernel density estimator, and doing so at just a single point requires $\mathcal{O}(n d^2)$ operations. \nc

%\medskip

Recently there has been a significant effort to recover one-dimensional structures which are branching and intersecting and in particular the connectivity network of the data set. The ability to recover graph structures and the topology of the data is very valuable, and facilitates a number of data analysis tasks. 
Several notable works are based on Reeb graphs and related structures \cite{Cha14, GeSaBeWa11, SiMeCa07}. 
We note that these approaches are sensitive to noise and furthermore the presence of noise significantly slows down the algorithms.
 We believe our approach and algorithm could be valuable as a pre-processing step for simplifying the data prior to applying graph-based approaches that find the connectivity network of the data set. Recalling the data from Example \ref{grid} and Figure \ref{fig:window}, we see that although our approach does not recover the topological structure, it does identify and appropriately simplifies the one dimensional structure present in the data. The approaches mentioned here should work much better on the simplified (green or blue) data than on the original point cloud. 
 
Finally we mention the work of Arias-Castro, Donoho, and Huo \cite{ACDH06} who studied optimal conditions 
and algorithms for detecting (sufficiently smooth) one-dimensional structures with uniform noise in the background.

%
% Finally, we briefly mention nonlinear dimensionality reduction methods that are global in the sense that they do not require any kind of initialization. Methods such as locally linear embedding \cite{RowSau00}, Laplacian eigenmaps \cite{BelNiy03}, diffusion maps \cite{CoiLaf06} and several others find lower-dimensional representation of the data by aiming to preserve local distance information. These methods typically give desirable embeddings of the data, but do not provide an explicit curve or manifold that approximates the data in the original space. Subsequent manifold estimation is sometimes performed using regression, as in \cite{ZhaZha06} (where local tangent spaces are first aligned to provide the embedding coordinates). Due to their dependence on pairwise distance information, these methods can be computationally intensive when the number of data points is very large. 

\section*{Acknowledgements}
We are grateful to Xin Yang Lu, Ryan Tibshirani, and  Larry Wasserman for valuable discussions. 
The research for this work has been supported by the National
Science Foundation under grants  CIF 1421502 and DMS 1516677.
We are furthermore thankful to Center for Nonlinear Analysis (CNA) for its support.

%%%%%%%%%%%%%%%%%%%%%%%%%%%%%%%%%%%%%%%%%%%%%%%%%%%

\appendix
\section{Analysis of the uniformly distributed  data on a line segment} \label{apA}

Consider data uniformly distributed with density $\alpha$ on a line segment $[0,L]$. 
The functional \eqref{mppc} takes the form
\begin{equation}
\label{energy}
 E^{\lambda_1,\lambda_2}_\mu(\gamma) := \int_0^L d(x,\gamma)^p \alpha dx + \lambda_1 ( \length(\gamma)  +  \lambda_2 \comp(\gamma ) ) 
 \end{equation}
where $\comp(\gamma )$ is the number of components of $\gamma$ minus 1. We restrict ourselves to $\gamma$ such that $\{0,L\} \subset \te{range}(\gamma)$, so that $\gamma$ takes the form $\gamma = \bigcup_{i=1}^{\comp +1} [a_i,b_i]$, where $a_1=0$, and $b_{\comp+1}=L$. Define $\tau := \sum_{i=1}^{\comp +1} \tau_i$, $g:= \sum_{i=1}^{\comp } g_i$, where $\tau_i := b_i-a_i$ and $g_i := a_{i+1}-b_i$. We make the following observations:

\begin{lemma} \label{lem:rearrange}
The energy  $E^{\lambda_1,\lambda_2}_\mu$ is invariant under redistribution of total length of $\gamma$, assuming that the number of components is $\comp+1$, and that the gap sizes remain constant. More precisely, if $\bar{\gamma} =\bigcup_{i=1}^{\comp +1} [\bar{a_i},\bar{b_i}], \tilde{\gamma} = \bigcup_{i=1}^{\comp +1} [\tilde{a_i},\tilde{b_i}]$ and there exists a permutation $\sigma$ of $\{1, \dots, \comp \}$ such that $\bar{g}_i = \tilde{g}_{\sigma(i)}$ for $i=1,...,\comp $, then $E^{\lambda_1,\lambda_2}_\mu(\bar{\gamma}) = E^{\lambda_1,\lambda_2}_\mu(\tilde{\gamma})$.
\end{lemma}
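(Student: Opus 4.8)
The plan is to decompose $E^{\lambda_1,\lambda_2}_\mu$ into its three constituent terms---fidelity, length, and component penalty---and to show that each is determined solely by the number of components (held fixed at $\comp+1$) and by the \emph{multiset} of gap lengths $\{g_i\}$. Invariance under the permutation $\sigma$ will then follow at once.

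First I would analyze the fidelity term. Since $\gamma\subset[0,L]$ and $\{0,L\}\subset\te{range}(\gamma)$, every $x$ lying in one of the segments $[a_i,b_i]$ satisfies $d(x,\gamma)=0$, so only the gaps contribute. For $x$ in the $i$-th gap $(b_i,a_{i+1})$ the nearest point of $\gamma$ is one of the two adjacent endpoints $b_i,a_{i+1}$, giving the symmetric tent profile $d(x,\gamma)=\min(x-b_i,\,a_{i+1}-x)$. Hence
\[
 \int_0^L d(x,\gamma)^p\,\alpha\,dx = \sum_{i=1}^{\comp} \int_{b_i}^{a_{i+1}} \min(x-b_i,\,a_{i+1}-x)^p\,\alpha\,dx .
\]
By the substitution $y=x-b_i$ and symmetry of the integrand about the gap midpoint, each summand equals $\phi(g_i):=\frac{\alpha}{2^{p}(p+1)}\,g_i^{p+1}$, a function of the gap length alone. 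Thus the fidelity term is $\sum_i\phi(g_i)$, which is symmetric in the $g_i$ and therefore unchanged when the gaps are relabelled by $\sigma$.

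Next I would treat the length and component terms, which are simpler. Because the segments and gaps partition $[0,L]$, one has $\length(\gamma)=\tau=L-g$ with $g=\sum_{i=1}^{\comp}g_i$; since permuting the gaps preserves the total gap length $g$, the term $\lambda_1\length(\gamma)$ is invariant. The penalty $\lambda_1\lambda_2\comp(\gamma)$ is literally fixed by the hypothesis that both configurations have $\comp+1$ components. Combining the three pieces gives
\[
 E^{\lambda_1,\lambda_2}_\mu(\gamma) = \sum_{i=1}^{\comp}\phi(g_i) + \lambda_1(L-g) + \lambda_1\lambda_2\,\comp ,
\]
which depends on $\gamma$ only through $\comp$ and the multiset $\{g_i\}$. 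In particular the individual segment lengths $\tau_i$ never enter, so redistributing the total length $\tau$ among segments also leaves $E$ unchanged; applying the displayed formula to $\bar\gamma$ and $\tilde\gamma$ with $\bar g_i=\tilde g_{\sigma(i)}$ then yields $E^{\lambda_1,\lambda_2}_\mu(\bar\gamma)=E^{\lambda_1,\lambda_2}_\mu(\tilde\gamma)$.

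There is no serious obstacle here; the only point demanding a little care is the fidelity computation---confirming that on each interior gap the distance function is exactly the symmetric tent $\min(x-b_i,\,a_{i+1}-x)$ (which uses that $\{0,L\}$ is covered, so no gap abuts the boundary) and that its $p$-th power integrates to a quantity depending on $g_i$ alone. Everything else is bookkeeping about how the segment and gap lengths partition $[0,L]$.
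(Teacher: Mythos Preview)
Your proof is correct and complete. The paper in fact states this lemma as an ``observation'' and gives no explicit proof; your decomposition of $E^{\lambda_1,\lambda_2}_\mu$ into fidelity, length, and component-penalty terms, together with the computation that the fidelity contribution from each gap is $\phi(g_i)=\frac{\alpha}{2^p(p+1)}g_i^{p+1}$, makes explicit exactly the reasoning the authors leave to the reader, and is the natural (indeed essentially the only) way to see the invariance.
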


\begin{lemma} \label{lem:unigap}
For $\comp>0$ fixed, the energy $E^{\lambda_1,\lambda_2}_\mu$ is minimized when the length of the gaps between components are uniform. More precisely, consider an arbitrary $\gamma = \bigcup_{i=1}^{\comp +1} [a_i,b_i]$, with total gap $g$ defined as above. Let $\tilde{\gamma}$ have $\comp+1$ components such that $\tilde{g}_i = g/\comp $, implying that $\tilde{g} = g$. Then $E^{\lambda_1,\lambda_2}_\mu(\tilde{\gamma}) \leq E^{\lambda_1,\lambda_2}_\mu(\gamma)$, with equality only if $g_i = \tilde{g}_i$.
\end{lemma}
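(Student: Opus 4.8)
The plan is to reduce the comparison of energies to a single convexity inequality. First I would observe that $\tilde{\gamma}$ has the same number of components ($\comp+1$) as $\gamma$ and, since $\tilde{g}=g$, the same total length $\length(\tilde{\gamma}) = L-g = \length(\gamma)$ (the covered segments and gaps tile $[0,L]$, so $\tau + g = L$). Consequently the length term $\lambda_1\length(\cdot)$ and the component term $\lambda_1\lambda_2\comp(\cdot)$ in \eqref{energy} take identical values on $\gamma$ and $\tilde{\gamma}$, so it suffices to compare the fidelity terms $\int_0^L d(x,\gamma)^p\,\alpha\,dx$.

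Next I would evaluate the fidelity term gap by gap. Because $\{0,L\}\subset\te{range}(\gamma)$, every gap $(b_i,a_{i+1})$ is flanked on both sides by covered segments, so a point at distance $u\in[0,g_i]$ from $b_i$ inside the $i$-th gap has $d(x,\gamma)=\min(u,g_i-u)$, while points in covered segments contribute nothing. A direct computation gives
\[ \int_0^L d(x,\gamma)^p\,\alpha\,dx = \alpha\sum_{i=1}^{\comp}\int_0^{g_i}\min(u,g_i-u)^p\,du = \frac{\alpha}{2^p(p+1)}\sum_{i=1}^{\comp} g_i^{\,p+1}. \]
In particular the fidelity term depends on the configuration only through the gap lengths $\{g_i\}$, consistent with Lemma \ref{lem:rearrange}.

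Finally I would minimize $\sum_{i=1}^{\comp} g_i^{\,p+1}$ over $g_i\ge 0$ subject to $\sum_{i=1}^{\comp} g_i = g$. Since $p\ge 1$ forces the exponent $p+1\ge 2$, the map $t\mapsto t^{\,p+1}$ is strictly convex on $[0,\infty)$, so Jensen's inequality yields
\[ \frac{1}{\comp}\sum_{i=1}^{\comp} g_i^{\,p+1} \ge \left(\frac{1}{\comp}\sum_{i=1}^{\comp} g_i\right)^{p+1} = \left(\frac{g}{\comp}\right)^{p+1}, \]
that is $\sum_i g_i^{\,p+1}\ge \sum_i \tilde{g}_i^{\,p+1}$, with equality precisely when all $g_i$ coincide, i.e.\ $g_i = g/\comp = \tilde{g}_i$. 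Combined with the first step this gives $E^{\lambda_1,\lambda_2}_\mu(\tilde{\gamma})\le E^{\lambda_1,\lambda_2}_\mu(\gamma)$, with equality only in the stated case.

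There is no deep obstacle here: the only points requiring care are the reduction in the first step (checking that the \emph{total length}, and not merely the total gap, is preserved, so that the first two penalty terms drop out) and the correct setup of the per-gap integral. In particular one must confirm that having $0$ and $L$ covered removes any boundary half-gaps, which is exactly what validates the symmetric formula $d(x,\gamma)=\min(u,g_i-u)$ on every gap. The essential content is then just the strict convexity of $t\mapsto t^{\,p+1}$.
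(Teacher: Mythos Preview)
Your proof is correct and rests on the same underlying fact as the paper's: the per-gap contribution to the fidelity term equals $\frac{\alpha}{2^p(p+1)}\,g_i^{\,p+1}$, and $t\mapsto t^{p+1}$ is strictly convex. The paper, however, implements this differently: it treats the case of two gaps by a direct calculus argument (computing $F(g_1)$, its first and second derivatives, and checking that the unique minimum occurs at $g_1=g_2$), and then reduces the general case to this one by a pairwise/adjacent-gap argument. Your route via Jensen's inequality applied to all $\comp$ gaps at once is more direct and avoids both the derivative computation and the inductive reduction; the paper's version, on the other hand, makes the local two-gap mechanism explicit, which ties in with the later ``three consecutive components'' remark. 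Either way the essential content is the strict convexity you identified.
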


\begin{proof}
The result is trivial for $\comp=0$. We prove the result for $\comp=1$. Consider $\gamma$ with $g = g_1 + g_2$. The fidelity part of the energy $E^{\lambda_1,\lambda_2}_\mu$, as a function of $g_1$ is
$$F(g_1) = 2\int_0^{g_1/2} x^p \alpha dx + 2\int_0^{(g-g_1)/2} x^p \alpha dx.$$
Thus
$$ \frac{dF}{dg_1} =  \frac{1}{2^{p-1}} \alpha \left ( g_1^p - (g-g_1)^p \right ) $$
and
$$ \frac{d^2F}{dg_1^2} =  \frac{p}{2^{p-1}} \alpha \left ( g_1^{p-1} + (g-g_1)^{p-1} \right ) \geq 0. $$
By these we see that $g_1$ minimizes the energy if and only if $g_1 = g/2 = g_2$. The result for $\comp>2$ follows since one can consider the above situation by looking at the gaps formed by three consecutive components.
\end{proof}

Using Lemma \ref{lem:rearrange} we may assume that each component not containing the endpoints $0$ or $L$ has the same length $l$, and that the two components containing the endpoints are of length $l/2$.
By Lemma \ref{lem:unigap}, the gaps between the components are 
$ \frac{L-\comp l}{\comp }. $
We first consider $ \comp >0 $ fixed, and minimize the energy w.r.t. $l$ in the range $l \in [0, \frac{L}{\comp })$. The energy 
\begin{align} \label{energy_new}
\begin{split}
E &= \lambda_1 \comp  l + 2\comp \int_0^{\frac{L-\comp l}{2\comp }} x^p \alpha dx + \lambda_1\lambda_2 \comp \\
& = \lambda_1 \comp  l + \frac{2}{p+1}\comp  \left ( \frac{L-\comp l}{2\comp } \right )^{p+1} \alpha + \lambda_1\lambda_2 \comp
\end{split}
\end{align}
is convex on $[0, \frac{L}{\comp })$.
Taking a derivative in $l$ we obtain
$$ \frac{dE}{dl} = \lambda_1 \comp  - \comp   \left ( \frac{L-\comp l}{2\comp } \right )^p \alpha. $$
Setting the derivative to zero and solving for $l$, and by noting that if there is no solution on $[0, \frac{L}{\comp })$ then $E$ is a nondecreasing function of $l$,  we get that the energy is minimized at
\begin{equation} \label{eq:lstar}
 l_{\comp}^* = 
 \begin{cases} \frac{L}{\comp } - 2\left( \frac{\lambda_1}{\alpha} \right)^{1/p} \qquad & \text{if } \quad  \comp  \leq \frac{L}{2\left( \frac{\lambda_1}{\alpha} \right)^{1/p}} =: \bar \comp \\
 0 & \textrm{else.}
\end{cases}
\end{equation}
as we indicate above let $\bar \comp = 1+\frac{L}{2\left( \frac{\lambda_1}{\alpha} \right)^{1/p}} $.
For $\comp$ between $1$ and $\bar \comp$,
plugging back into (\ref{energy_new}) we get that the minimal energy is 
\begin{equation} \label{energy_k}
E_{min}(\comp)  %\lambda_1 \left (L - 2\comp \left( \frac{\lambda_1}{\alpha} \right)^{1/p} \right) + \frac{2}{p+1}\comp  (\lambda_1/\alpha )^{\frac{p+1}{p}} \alpha + \lambda_1\lambda_2 \comp.  $$ 
%= \lambda_1 \left ( - \frac{2p}{p+1} \left( \frac{\lambda_1}{\alpha} \right)^{1/p} + \lambda_2 \right ) (\comp+1)+ \lambda_1 \left ( L +  \frac{2p}{p+1} \left( \frac{\lambda_1}{\alpha} \right)^{1/p} \right ) 
 = \lambda_1 L +\lambda_1 \lambda_2 \comp  - \frac{2p}{p+1} \left( \frac{\lambda_1}{\alpha} \right)^{1/p} \comp 
\end{equation}
By direct inspection we verify that \eqref{energy_k} is the (minimal) energy in the case that there is only one component (no breaks in the line).
We note that (\ref{energy_k}) is linear in $\comp$, and hence for $\comp$ between $0$ and $\bar \comp$,
the minimizing value is at a boundary:
\[ {\comp}^* = 
\begin{cases}
0 & \text{if} \quad  \lambda_2 \geq  \frac{2p}{p+1} \left( \frac{\lambda_1}{\alpha} \right)^{1/p}    \\
\left\lfloor \bar \comp \right\rfloor & \text{otherwise} \\
\end{cases} \]

We now consider $\comp > \bar \comp$ when all components have length zero ($l_{\comp}^* = 0$).
The energy in this case is
$$ E_{l=0}(\comp) := \frac{2}{p+1} \left ( \frac{L}{2} \right )^{p+1}  \frac{1}{\comp ^p} \alpha + \lambda_1\lambda_2 \comp$$
Considering $\comp$ as a real variable  we note that $E_{l=0}(\comp)$ is a convex function.
Taking a derivative in $\comp$ gives
$$ \frac{dE_{l=0}}{d\comp} = \frac{-2p}{p+1} \left ( \frac{L}{2\comp } \right )^{p+1} \alpha + \lambda_1\lambda_2. $$

If $ \lambda_2 \geq  \frac{2p}{p+1} \left( \frac{\lambda_1}{\alpha} \right)^{1/p}$ then 
$\frac{dE_{l=0}}{d\comp} \geq 0$ for $\comp > \bar \comp$.

If $ \lambda_2 <  \frac{2p}{p+1} \left( \frac{\lambda_1}{\alpha} \right)^{1/p}$ then the point where the minimum is reached 
$$ \bar{\comp}_{l=0}^* =  \frac{L}{2} \left ( \frac{(p+1)\lambda_1 \lambda_2}{2p \alpha} \right )^{-\frac{1}{p+1}} $$
satisfies $\bar{\comp}_{l=0}^* >  \bar \comp$ and thus belongs to the range considered. 
If $\bar{\comp}_{l=0}^* $ is an integer then it is the minimizer of the energy, otherwise the minimizer is 
in the set $\{ \lfloor \bar{\comp}_{l=0}^* \rfloor,  \lfloor \bar{\comp}_{l=0}^* \rfloor +1\}$. 
In all cases let us denote by $\comp_{l=0}^*$ the minimizer of the energy:
$ \comp_{l=0}^* = \argmin_{k=\lfloor \bar{\comp}_{l=0}^* \rfloor, \lceil \bar{\comp}_{l=0}^* \rceil} E_{l=0}(k)$.

We note that there is a special case that ${\comp}^*_{l=0}< \bar \comp$. In that case the minimizer of the energy 
with exactly ${\comp}^*_{l=0}+1$ components will be the one considered in the analysis of the $1 \leq \comp \leq \bar \comp$ case, and thus will have segments of positive length $l^*$ given by formula \eqref{eq:lstar}.

%
% We observe that $\comp_{l=0}^*$ will be preferred to $\bar \comp$ if and only if $E_{l=0}(\comp_{l=0}^*) < E_{l^*}({\comp}^*)$, where
%$$E_{l=0}(\comp_{l=0}^*) =  \frac{L\alpha}{p+1} \left ( \frac{(p+1)\lambda_1\lambda_2}{2p\alpha} \right )^{p/{p+1}} + \lambda_1 \lambda_2 \left [ \frac{L}{2} \left ( \frac{(p+1)\lambda_1\lambda_2}{2p\alpha} \right )^{-1/{p+1}} + 1 \right ] $$
%$$E_{l^*}({\comp}^*) = \lambda_1 \left ( - \frac{2p}{p+1} \left( \frac{\lambda_1}{\alpha} \right)^{1/p} + \lambda_2 \right ) \lfloor {1+(\lambda_1/\alpha)^{-1/p}L/2} \rfloor+ \lambda_1 \left ( L +  \frac{2p}{p+1} \left( \frac{\lambda_1}{\alpha} \right)^{1/p} \right ) $$

To summarize, the optimal number of components will be
\begin{equation} \label{app:nocomp}
\begin{cases}
1 & \text{if} \quad  \lambda_2 \geq  \frac{2p}{p+1} \left( \frac{\lambda_1}{\alpha} \right)^{1/p}    \\
\lfloor \bar \comp\rfloor +1  & \text{if} \quad  \lambda_2 <  \frac{2p}{p+1} \left( \frac{\lambda_1}{\alpha} \right)^{1/p},  \quad \text{and} \quad  {\comp}^*_{l=0} < \bar \comp\\
\comp_{l=0}^*+1 & \text{if} \quad  \lambda_2 <  \frac{2p}{p+1} \left( \frac{\lambda_1}{\alpha} \right)^{1/p},  \quad \text{and} \quad  {\comp}^*_{l=0} \geq \bar \comp. \\
\end{cases} 
\end{equation}
In the first case, there is just one single connected component. In the second case there are $ \lfloor \bar \comp \rfloor +1$ components, each with equal positive length.  We note that by Lemma 
\ref{lem:rearrange} there exists a configuration with the same energy where one of these components has positive length, while the rest have zero length. The third case is that  each of the components has length zero. We point out that if $\bar \comp$ is integer-valued and $ \lambda_2 <  \frac{2p}{p+1} \left( \frac{\lambda_1}{\alpha} \right)^{1/p}$, then the minimizer will have $\comp_{l=0}^*+1$ components.

We can now derive conclusions to the structure of minimizers if $L \gg 1$. From above we conclude that the minimizer will have one component (and be a continuous line) if $\lambda_2 \geq  \frac{2p}{p+1} \left( \frac{\lambda_1}{\alpha} \right)^{1/p}$, and break up into at least $ \lfloor \bar {\comp}^*_{l=0} \rfloor+1$ components otherwise. Rearranging, the condition also provides the critical density at which topological changes (gaps) in minimizers occur:
\begin{equation} \label{app:alpha}
 \alpha^* = \left ( \frac{2p}{p+1} \right )^{p}  \frac{\lambda_1}{\lambda_2^p}. 
\end{equation}

Finally we note that the typical gap length is $L/(\bar {\comp}^*_{l=0})$ that is 
\begin{equation} \label{app:L}
 L^* = 2  \left ( \frac{(p+1)\lambda_1 \lambda_2}{2p \alpha} \right )^{\frac{1}{p+1}}. 
\end{equation}

\bibliography{references}
\bibliographystyle{siam}

\end{document}